\crefname{enumi}{}{}
\Crefname{enumi}{}{}
\crefname{equation}{}{}
\numberwithin{equation}{section}
\newtheoremstyle{myplain} {6pt plus 6pt minus 2pt}
{6pt plus 6pt minus 2pt}
{\itshape}
{}
{\bfseries}
{.}
{.5em}
{}
\theoremstyle{myplain}
\newtheorem{theorem}{Theorem}[section]
\newtheorem*{theorem*}{Theorem}
\newtheorem{lemma}[theorem]{Lemma}
\newtheorem{prop}[theorem]{Proposition}
\newtheorem{corollary}[theorem]{Corollary}
\newtheoremstyle{mydefinition} {6pt plus 6pt minus 2pt}
{6pt plus 6pt minus 2pt}
{\itshape}
{}
{\bfseries}
{.}
{.5em}
{}
\theoremstyle{mydefinition}
\newtheorem{definition}[theorem]{Definition}
\newtheoremstyle{myexample} {6pt plus 6pt minus 2pt}
{6pt plus 6pt minus 2pt}
{}
{}
{\scshape}
{.}
{.5em}
{}
\theoremstyle{myexample}
\newtheorem{example}[theorem]{Example}
\newtheoremstyle{myremark} {6pt plus 6pt minus 2pt}
{6pt plus 6pt minus 2pt}
{}
{}
{\scshape}
{.}
{.5em}
{}
\theoremstyle{myremark}
\newtheorem{remark}[theorem]{Remark}
\newcommand{\Z}{\mathbb{Z}}
\newcommand{\C}{\mathbb{C}}
\newcommand{\gl}{\mathfrak{gl}}
\newcommand{\catO}{\mathcal{O}}
\newcommand{\suchthat}{\mid} 
\newcommand{\mapto}{\rightarrow}
\newcommand{\surto}{\twoheadrightarrow}
\newcommand{\into}{\hookrightarrow}
\newcommand{\len}{\ell}
\newcommand{\blank}{\mathord{-}}
\DeclareMathOperator{\Hom}{Hom}
\DeclareMathOperator{\End}{End}
\DeclareMathOperator{\coker}{coker}
\newcommand{\id}{\mathrm{id}}
\newcommand{\abs}[1]{\left|#1\right|}
\newcommand{\gmod}[1]{#1\mathrm{-gmod}}
\newcommand{\rgmod}[1]{\mathrm{gmod-}#1}
\newcommand{\lmod}[1]{#1\mathrm{-mod}}
\newcommand{\rmod}[1]{\mathrm{mod-}#1}
\renewcommand{\epsilon}{\varepsilon}
\renewcommand{\phi}{\varphi}
\newcommand{\calA}{{\mathcal{A}}}
\newcommand{\calB}{{\mathcal{B}}}
\newcommand{\calC}{{\mathcal{A}}}
\newcommand{\ocalC}{{\mathcal{C}}}
\newcommand{\calF}{{\mathcal{F}}}
\newcommand{\calG}{{\mathcal{G}}}
\newcommand{\calP}{{\mathcal{P}}}
\newcommand{\calQ}{{\mathcal{Q}}}
\newcommand{\calS}{{\mathcal{S}}}
\newcommand{\sfC}{{\mathsf{C}}}
\newcommand{\sfE}{{\mathsf{E}}}
\newcommand{\sfF}{{\mathsf{F}}}
\newcommand{\sfG}{{\mathsf{G}}}
\newcommand{\sfK}{{\mathsf{K}}}
\newcommand{\sfT}{{\mathsf{T}}}
\newcommand{\sfZ}{{\mathsf{Z}}}
\newcommand{\sfi}{{\mathsf{i}}}
\newcommand{\frakb}{{\mathfrak{b}}}
\newcommand{\frakd}{{\mathfrak{d}}}
\newcommand{\frakg}{{\mathfrak{g}}}
\newcommand{\frakh}{{\mathfrak{h}}}
\newcommand{\frakl}{{\mathfrak{l}}}
\newcommand{\frakn}{{\mathfrak{n}}}
\newcommand{\frakp}{{\mathfrak{p}}}
\newcommand{\frakq}{{\mathfrak{q}}}
\newcommand{\fraku}{{\mathfrak{u}}}
\newcommand{\frakz}{{\mathfrak{z}}}
\newcommand{\bbS}{{\mathbb{S}}}
\newcommand{\bbV}{{\mathbb{V}}}
\newcommand{\bbZ}{{\mathbb{Z}}}
\newcommand{\bolda}{{\boldsymbol{a}}}
\newcommand{\boldb}{{\boldsymbol{b}}}
\newcommand{\boldc}{{\boldsymbol{c}}}
\newcommand{\boldd}{{\boldsymbol{d}}}
\newcommand{\boldr}{{\boldsymbol{r}}}
\DeclareSymbolFontAlphabet{\mathbb}{AMSb}
\DeclareSymbolFontAlphabet{\mathbbol}{bbold}
\DeclareMathAlphabet{\mathbbm}{U}{bbm}{m}{n}
\SetMathAlphabet\mathbbm{bold}{U}{bbm}{bx}{n}
\DeclareSymbolFont{usualmathcal}{OMS}{cmsy}{m}{n}
\DeclareSymbolFontAlphabet{\mathucal}{usualmathcal}
\preto\subequations{\ifhmode\unskip\fi}
\theoremstyle:=mydefinition,myremark,myplain,myexample\do{%
        \expandafter\g@addto@macro\csname th@\theoremstyle\endcsname{%
            \addtolength\thm@preskip\parskip
            }%
        }
\tikzset{anchorbase/.style={baseline={([yshift=-0.5ex]current bounding box.center)}}}
\tikzset{anchorzero/.style={baseline={([yshift=-0.5ex]0,0)}}}
\newcommand{\short}{{\textrm{short}}}
\newcommand{\shortestcosetleft}[2]{\scalebox{0.9}{$\big(\text{\raisebox{-2pt}{$#1$}}\hspace{-2pt} \rotatebox{10}{$\big\backslash$} \hspace{-2pt}\text{\raisebox{2pt}{$#2$}}\big)^{\short}$}}
\newcommand{\leftquotient}[2]{\scalebox{0.9}{\text{\raisebox{-2pt}{$#1$}}\hspace{-5pt} \rotatebox{10}{$\big\backslash$} \hspace{-5pt}\text{\raisebox{2pt}{$#2$}}}}
\newcommand{\rightquotient}[2]{\scalebox{0.9}{\text{\raisebox{2pt}{\(#1\)}}\hspace{-6pt} \rotatebox{-10}{$\big/$} \hspace{-4pt}\text{\raisebox{-2pt}{\(#2\)}}}}
\newcommand{\adjunction}{\dashv}
\newcommand{\sfEZ}{{\prescript{\smash \Z}{}{\sfE}}}
\newcommand{\sfFZ}{{\prescript{\smash \Z}{}{\sfF}}}
\newcommand{\sfTZ}{{\prescript{\smash \Z}{}{\sfT}}}
\newcommand{\sfKZ}{{\prescript{\smash \Z}{}{\sfK}}}
\newcommand{\DeltaZ}{{\prescript{\smash \Z}{}{\Delta}}}
\newcommand{\sfEg}{{\boldsymbol\sfE}}
\newcommand{\sfFg}{{\boldsymbol \sfF}}
\newcommand{\sfKg}{{\boldsymbol\sfK}}
\newcommand{\xig}{{\boldsymbol \xi}}
\newcommand{\taug}{{\boldsymbol \tau}}
\newcommand{\calCg}{{\boldsymbol \calC}}
\newcommand{\Deltag}{{\boldsymbol \Delta}}
\newcommand{\tableau}{{\mathrm{T}}}
\newcommand{\prel}{\prec}
\newcommand{\preleq}{\preccurlyeq}
\newcommand{\preg}{\succ}
\newcommand{\oDelta}{{\overline{\Delta}}}
\newcommand{\enne}{{n}}
\newcommand{\rhogl}{{\boldsymbol{\rho}}}
\newcommand{\catOZ}{{\prescript{\smash \Z}{}{\catO}}}
\newcommand{\calQZ}{{\prescript{\smash \Z}{}{\calQ}}}
\newcommand{\fraksl}{{\mathfrak{sl}}}
\newcommand{\ucalH}{{\mathucal{H}}}
\newcommand{\ucalC}{{\mathucal{C}}}
\newcommand{\Part}{{\mathfrak{Part}}}  
\newcommand{\weightsgl}{{\mathrm{P}}}
\newcommand{\weightssl}{{\mathbbol{\Lambda}}}
\newcommand{\boldlambda}{{\boldsymbol{\lambda}}}
\newcommand{\boldnu}{{\boldsymbol{\nu}}}
\DeclareMathOperator{\Vect}{Vect}
\DeclareMathOperator{\pr}{pr}
\DeclareMathOperator{\res}{res}
\DeclareMathOperator{\gr}{gr}
\newcommand{\St}{{\mathrm{St}}}
\newcommand{\boldT}{{\boldsymbol{T}}}
\newcommand{\aff}{{\mathrm{aff}}}
\renewcommand{\blank}{{\mathord{\bullet}}}
\renewcommand{\bolda}{{\mathbf{a}}}
\renewcommand{\boldb}{{\mathbf{b}}}
\renewcommand{\boldc}{{\mathbf{c}}}
\renewcommand{\boldd}{{\mathbf{d}}}
\renewcommand{\boldr}{{\mathbf{r}}}
\DeclareMathOperator{\sgn}{sgn}
\newcommand{\Uqslk}{U_q(\mathfrak{sl}_k)}
\newcommand{\pres}{{\mathrm{pres}}}
\title[Categorification of \(\fraksl_k\)--representations]{Categorification of tensor product representations of $\fraksl_k$ and category \(\catO\)}
\author{Antonio Sartori}
\address{A. S.: Department of Mathematics\\%
University of York\\%
York, YO10 5DD (UK)}
\email{antonio.sartori@york.ac.uk}
\author{Catharina Stroppel}
\address{C.S.: Mathematikzentrum\\University of Bonn\\53115 Bonn (Germany)}
\email{stroppel@math.uni-bonn.de}
\date{}
\keywords{Categorification, Category \(\catO\), standardly stratified categories, quantum groups, representations of \(\fraksl_k\).}
\thanks{The first author has been supported by the EPSRC grant EP/I014071.}
\begin{document}

\begin{abstract}
  We construct categorifications of tensor products of arbitrary finite-dimensional irreducible representations of \(\fraksl_k\) with subquotient categories of the BGG category \(\catO\), generalizing previous work of Sussan and Mazorchuk-Stroppel.  Using Lie theoretical methods, we prove in detail  that they are tensor product categorifications according to the recent definition of Losev and Webster. As an application we deduce an equivalence of categories between certain versions of category  \(\catO\) and Webster's tensor product categories. Finally we indicate how the categorifications of tensor products of the natural representation of $\mathfrak{gl}(1|1)$ fit into this framework. 
\end{abstract}

\maketitle

\section{Introduction}
\label{sec:introduction}

Since  the groundbreaking work of Khovanov \cite{MR1740682}, substantial progress has been made in the categorification of  irreducible representations of Lie algebras and their tensor products. Milestones were the introduction  of the Khovanov-Lauda-Rouquier algebras (\cite{2008arXiv0812.5023R}, \cite{MR2525917}, \cite{MR2763732}, \cite{MR2628852}) and the establishment of existence and uniqueness results (\cite{MR2373155}, \cite{2013arXiv1303.1336L}). In \cite{2013arXiv1303.1336L}, Losev and Webster gave for the first time a formal definition of \emph{tensor product categorification}, with which they were able to prove a strong uniqueness result. They also showed that for each finite tensor product of finite dimensional irreducible representations of a complex semisimple Lie algebra such a categorification exists, using Webster's diagram algebras, \cite{2013arXiv1309.3796W}.

In type A, a big role in categorification has always been played  by the BGG category \(\catO\) \cite{MR0407097}. Categorifications of  tensor powers of the vector representation of \(\fraksl_2\) using category \(\catO\) have been constructed by Bernstein, Frenkel, Khovanov \cite{MR1714141} and Frenkel, Khovanov and the second author \cite{MR2305608}. Later, categorifications of fundamental representations of \(\fraksl_k\) for \(k \geq 2\) using parabolic subcategories of the BGG category \(\catO(\gl_\enne)\) have been constructed in \cite{2007math......1045S} and \cite{MR2567504}. In the present paper, we generalize their construction to arbitrary irreducible representations using subquotient categories of \(\catO(\gl_\enne)\). Moreover, we prove that this construction is a tensor product categorification according to \cite{2013arXiv1303.1336L}, and hence is equivalent to Webster's diagrammatic categorification.

We point out that it was already known to experts 
that subquotient categories of \(\catO(\gl_\enne)\) categorify
arbitrary tensor products of \(\fraksl_k\)--representations,
although details cannot be found in the literature. The
existence of our construction is in fact implied by
Webster's categorification
\cite{2013arXiv1309.3796W}. Indeed, as Webster proved, the
category \(\catO(\gl_\enne)\) is equivalent to the module
category over his diagram algebra. Since Webster's categorification
of arbitrary tensor products is obtained via idempotent
truncations and quotients of that diagram algebra, it
follows via this equivalence that the same can be done with
subquotient categories of \(\catO(\gl_\enne)\) (see
\cite[Proposition~8.8]{2013arXiv1309.3796W}).  The advantage
of the present paper, however, is that we identify these
categories explicitly inside \(\catO(\gl_\enne)\), and we
prove all details using only Lie theory, without making use
of the equivalence with Webster's diagram algebra. We hope
that this on the one hand can provide a better understanding of the involved
subquotient categories of \(\catO\) and of the Lie
theoretical categorification, and on the other hand provides the possibility to better understand the established Lie theory via the viewpoint of the diagram algebras.

In order to state our main result, let us introduce some notation.
Let \(\lambda^{(1)},\dotsc,\lambda^{(m)}\) be dominant integral weight for \(\fraksl_k\), and let 
\(V(\lambda^{(1)}),\dotsc,V(\lambda^{(m)})\) be the irreducible finite-dimensional \(\fraksl_k\)--representations with corresponding highest weight. There is a standard way to identify each of the \(\lambda^{(i)}\) with a partition of \(\enne_i\). Set also  \(\enne=\enne_1+\dotsb+\enne_m\). 
We summarize the  results of the paper in the following theorem (which should be compared with {\cite[Proposition~8.8]{2013arXiv1309.3796W}}):

\begin{theorem}
  \label{thm:5}
  There exists a subquotient category
  \(\calQ^{\lambda}_I\) of \(\catO(\gl_{\enne})\) together
  with endofunctors \(\sfE\), \(\sfF\) and an action of the
  KLR 2-category which defines an \(\fraksl_k\)--tensor
  product categorification of \(V(\boldlambda)=V(\lambda^{(1)}) \otimes
  \dotsb \otimes V(\lambda^{(m)})\). This can be lifted to a
  graded \(U_q(\fraksl_k)\)--categorification.
\end{theorem}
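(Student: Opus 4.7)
My plan is to execute the construction in three stages, generalizing the parabolic categorification of \cite{2007math......1045S,MR2567504} from fundamental representations to arbitrary highest weights $\lambda^{(i)}$. First, to identify $\calQ^\lambda_I$, I would start from an appropriate (possibly singular) block of $\catO(\gl_\enne)$ and take the parabolic subcategory $\catO^\frakp$ for the parabolic $\frakp\subset\gl_\enne$ with Levi $\gl_{\enne_1}\times\cdots\times\gl_{\enne_m}$; this already produces a tensor-product-shaped category. To carve out exactly $V(\lambda^{(i)})$ in the $i$-th tensor slot rather than a tensor product of fundamental (or symmetric) pieces, I would then pass to a Serre subquotient by killing all simples whose labels fail a $\lambda^{(i)}$-admissibility condition encoded combinatorially by the shape of the partition. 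The auxiliary set $I$ records this choice of admissibility cut, and the two-step construction (parabolic subcategory followed by Serre quotient) is what makes $\calQ^\lambda_I$ a genuine \emph{sub\-quo\-tient} rather than merely a subcategory.

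Second, the classical translation functors $\sfE=\pr(-\otimes\V)$ and $\sfF=\pr(-\otimes\V^*)$, with $\V$ the natural $\gl_\enne$-module and $\pr$ the projection to the chosen block, decompose into $i$-components $\sfE_i,\sfF_i$ via the generalized eigenspaces of the Casimir acting on $-\otimes\V$. These descend to $\calQ^\lambda_I$ because translation functors preserve the defining Serre subcategories, and they are biadjoint at the level of the subquotient. The KLR $2$-functor is then realized, following Chuang-Rouquier and Brundan-Kleshchev, using the natural transformations induced by the $\gl_\enne$-Casimir (giving the dot $x$) and by the $R$-matrix-induced flip on $-\otimes\V\otimes\V$ (giving the crossing $\tau$); the verification of the cyclotomic KLR relations reduces to well-known computations inside $\catO$ since the relations descend through the quotient.

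The heart of the proof is checking the Losev-Webster axioms of \cite{2013arXiv1303.1336L} for a tensor product categorification. The decategorification statement $[\calQ^\lambda_I]\cong V(\boldlambda)$ as an $\fraksl_k$-module with $[\sfE_i]=e_i$, $[\sfF_i]=f_i$ follows from the classical computation of translation functors on parabolic Vermas together with the explicit combinatorial classification of simples in $\calQ^\lambda_I$. The main obstacle is matching the standardly stratified structure: the axioms demand a stratification indexed by tensor-product weight tuples, with ``standard'' objects (the categorical incarnation of pure tensors of weight vectors) satisfying rigid compatibility with $\sfE_i, \sfF_i$. I would produce these standards explicitly as proper standard modules inherited from the parabolic-singular stratification of $\catO(\gl_\enne)$ (with the partial order coming from the Bruhat order on shortest double-coset representatives), then verify by a direct filtration computation that $\sfE_i \Delta(\boldi)$ and $\sfF_i \Delta(\boldi)$ filter with exactly the multiplicities matching the Leibniz action of $e_i, f_i$ on pure tensors in $V(\boldlambda)$. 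A delicate part of this will be confirming that the induced stratification on the Serre quotient is not spoiled by the truncation, which should hold because the quotient is by a highest-weight-closed Serre subcategory.

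Finally, the graded $U_q(\fraksl_k)$-lift follows by passing to the Koszul-graded version of $\catO(\gl_\enne)$ of Beilinson-Ginzburg-Soergel: $\calQ^\lambda_I$ inherits a graded structure, the translation functors admit canonical graded lifts (normalized so the quantum integer relations hold on the level of graded Euler characteristics), and the graded Grothendieck group is identified with the $\Z[q,q^{-1}]$-form of $V(\boldlambda)$ by matching graded standards with the quantum standard basis coming from the projection out of $\V^{\otimes\enne}$.
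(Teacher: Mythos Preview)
Your construction misidentifies which parabolic subalgebra plays which role, and this is a genuine gap rather than a difference in taste. You propose to take $\catO^\frakp$ for the parabolic with Levi $\gl_{\enne_1}\times\cdots\times\gl_{\enne_m}$ and then pass to a Serre quotient. But $\catO^\frakp_I$ for that parabolic categorifies (when all $\enne_i<k$, so that it is nontrivial at all) the tensor product $V(\varpi_{\enne_1})\otimes\cdots\otimes V(\varpi_{\enne_m})$ of \emph{one} fundamental representation per tensor slot; no further Serre quotient of this will produce $V(\lambda^{(1)})\otimes\cdots\otimes V(\lambda^{(m)})$ for arbitrary $\lambda^{(i)}$, and for general $\lambda^{(i)}$ (where $\enne_i=|\lambda^{(i)}|$ may well exceed $k-1$) the category is simply the wrong object. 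Your reading of the subscript $I$ is also off: it is just $\{1,\dots,k\}$, recording the restriction to $k$-bounded weights, not an admissibility cut.

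In the paper the two parabolics enter in the opposite way. The subcategory step uses the \emph{finer} composition $\tau=(\lambda^{(1)}_1,\lambda^{(1)}_2,\dots,\lambda^{(m)}_1,\dots)$ built from the parts of all the partitions, so that $\catO^\tau_I$ categorifies the full tensor product of fundamentals $\bigotimes_{i,j} V(\varpi_{\lambda^{(i)}_j})$; the Serre quotient then passes to the category $\calQ^{\boldlambda}_I$ of modules presentable by \emph{prinjective} objects of $\catO^\tau_I$ (whose indecomposable projectives are indexed by semistandard multitableaux), and this is what carves out $V(\boldlambda)$. The coarser Levi $\frakl=\gl_{\enne_1}\oplus\cdots\oplus\gl_{\enne_m}$ enters not as the parabolic defining the subcategory but through \emph{parabolic induction} $\Delta=U(\gl_\enne)\otimes_{U(\frakp_\frakl)}(-)$ from $\catO(\frakl)$: this is the standardization functor, the stratification poset consists of $W_\frakl$-orbits on weights with the dominance order (not the double-coset Bruhat data you propose), and the filtration axiom for $\sfF_i\Delta(M)$ falls out of the tensor identity $\C^\enne\otimes\Delta(M)\cong\Delta(\C^\enne\otimes M)$ together with the $\frakp_\frakl$-filtration of $\C^\enne$.

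Two smaller points. In the paper's conventions $\sfF=-\otimes\C^\enne$ and $\sfE=-\otimes(\C^\enne)^*$, the reverse of what you wrote. And your final paragraph understates a real difficulty: the paper does \emph{not} have a direct Lie-theoretic proof that the quiver Hecke action on translation functors is homogeneous for the Koszul grading; this is obtained only indirectly, via the ungraded uniqueness theorem, the equivalence with Webster's diagrammatic category, and the uniqueness of the Koszul grading there.
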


We should point out that, unfortunately, the Lie theoretical setting for categorification becomes slightly less pleasant when we want to discuss \emph{graded} categorifications of \(U_q(\fraksl_k)\)--representations, since we are still not able to prove that we have a graded categorical action using only Lie theory. Indeed, we have a graded version of category \(\catO\) (see \cite{MR1322847} and \cite{MR2005290}), which induces a grading also on the subquotient categories, and we also have graded lifts of all the functors, but we still miss a direct proof that the action of the degenerate affine Hecke algebra via natural transformations on translation functors lifts to a graded action of the KLR-algebra, expect for special cases treated in \cite{MR2781018}. As Webster explains in \cite[Section~8]{2013arXiv1309.3796W}, this can be deduced from the uniqueness of the Koszul grading. Appealing to this fact, we will discuss the graded categorification at the end of the paper.

We remark that subquotient categories of \(\catO(\gl_\enne)\) were introduced already in \cite{MR1921761}, where they were called \emph{generalized parabolic subcategories}; regular blocks have been used in \cite{MR2450613} to categorify induced Hecke modules, while regular and singular blocks, but only in some special cases, have been used in \cite{miophd2} to categorify representation of the general Lie superalgebra \(\gl(1|1)\). In the present construction, regular and singular blocks appear in full generality. We believe that the categorification result of this paper provides a better understanding of such categories, in particular thanks to the following direct consequence which provides a diagrammatical description of these categories.

\begin{corollary}
\label{Cor1}
The category \(\calQ^{\lambda}_I\) categorifying \(V(\boldlambda)\) is equivalent to modules over Webster's tensor algebra, \cite{2013arXiv1309.3796W} attached to \(\boldlambda\).   
\end{corollary}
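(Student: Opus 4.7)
The plan is to deduce the corollary directly from \cref{thm:5} by invoking the uniqueness theorem for tensor product categorifications established by Losev and Webster in \cite{2013arXiv1303.1336L}. Since \cref{thm:5} asserts that \(\calQ^\lambda_I\), equipped with the functors \(\sfE\), \(\sfF\) and the KLR 2-action constructed in the body of the paper, is an \(\fraksl_k\)--tensor product categorification of \(V(\boldlambda)=V(\lambda^{(1)}) \otimes \dotsb \otimes V(\lambda^{(m)})\), the uniqueness statement will match it up with any other such categorification.

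First I would recall, without reproving, that modules over Webster's tensor algebra attached to \(\boldlambda\) form a tensor product categorification of \(V(\boldlambda)\); this is exactly the content of Webster's construction in \cite{2013arXiv1309.3796W} and is the motivating example in \cite{2013arXiv1303.1336L}. Second, I would apply the Losev--Webster uniqueness theorem \cite[Theorem~A]{2013arXiv1303.1336L}, which asserts that any two tensor product categorifications of the same tensor product \(V(\boldlambda)\) with the same KLR 2-action are equivalent as categorifications. Applying this to the two candidates \(\calQ^\lambda_I\) and the module category of Webster's tensor algebra yields the desired equivalence of abelian categories, intertwining the categorical \(\fraksl_k\)--action.

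The main point to handle carefully is verifying that the formal hypotheses of the uniqueness theorem apply in our setting: namely that both categorifications satisfy the axioms of a tensor product categorification (highest weight structure with the correct standardly stratified order matching the tensor factor order, compatibility of the \(\sfE,\sfF\) functors with the standard filtration, etc.) and that the comparison is made over matching ground fields. On the \(\calQ^\lambda_I\) side this is precisely what \cref{thm:5} delivers, while on the Webster side this is built into the definition of the tensor algebra. Hence once \cref{thm:5} is established, the corollary is immediate.

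The hard part, conceptually, is not the invocation of uniqueness but the earlier work packaged into \cref{thm:5}: identifying the correct subquotient \(\calQ^\lambda_I\) inside \(\catO(\gl_\enne)\), verifying that translation functors restrict compatibly to provide \(\sfE\) and \(\sfF\), and checking the standardly stratified structure matches the tensor product order on \(V(\boldlambda)\). Given that material, the corollary is a one-line consequence. I would also remark, for completeness, that this yields a Lie-theoretic realization of Webster's diagram algebra as the endomorphism algebra of a projective generator of \(\calQ^\lambda_I\), thereby providing the diagrammatic description of these subquotient categories advertised in the introduction.
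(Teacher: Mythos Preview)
Your proposal is correct and follows essentially the same approach as the paper: invoke \cref{thm:5} (equivalently \cref{thm:1}) to see that \(\calQ^\boldlambda_I\) is a tensor product categorification of \(V(\boldlambda)\), recall that Webster's tensor algebra yields another such categorification, and conclude via the Losev--Webster uniqueness theorem \cite{2013arXiv1303.1336L}. The paper states this in one line after \cref{thm:1}, and your more detailed discussion of the hypotheses simply spells out what the paper leaves implicit.
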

This result follows directly from our main theorem and the uniqueness result of \cite{2013arXiv1303.1336L}.  The same result was proved already in  \cite[Proposition 8.8]{2013arXiv1309.3796W} directly, avoiding the power of the uniqueness result (with the distadvantage of having to check many technical details).  

Let $A=A_n$ be the (graded) diagram algebra introduced in \cite{miophd2} and \cite{Sartori} such that  the category $C(n)$ of (graded) $A_n$--modules categorifies, in the weak sense, the $n$-fold tensor products of the (quantized) natural representation of \(\gl(1|1)\). Then we obtain as a consequence of the main theorem:
\begin{corollary}
\label{Cor2}
The category \(C(n)\) is a Serre subquotient of the (graded) category \(\calQ^{\lambda}_I\) which defines a (graded)  \(\fraksl_k\)--tensor
  product categorification of  \(V(\boldlambda)=V(\lambda^{(1)}) \otimes\dotsb \otimes V(\lambda^{(n)})\), where \(V(\lambda^{(i)})=V(\varpi_1)\), for \(1\leq i\leq n\), is the first fundamental representation of (quantum) \(\fraksl_k\). 
\end{corollary}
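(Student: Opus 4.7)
The plan is to apply the main theorem in the special case of first fundamental representations and then identify the resulting $\calQ^\lambda_I$ with the ambient category in which \cite{miophd2, Sartori} already realize $C(n)$.

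First, I would specialize Theorem~\ref{thm:5} to the tuple $\boldlambda = (\varpi_1, \dots, \varpi_1)$ ($n$ factors). Since each $\lambda^{(i)} = \varpi_1$ corresponds to a single box, one has $\enne_i = 1$ and $\enne = n$, so the theorem produces a subquotient category $\calQ^\lambda_I$ of $\catO(\gl_n)$ carrying an $\fraksl_k$-tensor product categorification of $V(\varpi_1)^{\otimes n}$. The graded version provides the analogous $U_q(\fraksl_k)$-categorification statement. This already settles the second half of the corollary, namely that $\calQ^\lambda_I$ is the asserted tensor product categorification.

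Next, I would recall the realization of $C(n)$ established in \cite{miophd2} (ungraded) and \cite{Sartori} (graded): the diagram algebra $A_n$ is constructed so that $C(n) = \gmod{A_n}$ is equivalent to a Serre subquotient of $\catO(\gl_n)$ whose indexing data are $\{\uparrow,\downarrow\}$-sequences of length $n$. Combinatorially, these sequences correspond exactly to weight spaces of $V(\varpi_1)^{\otimes n}$ supported in the two-dimensional sublattice spanned by two adjacent fundamental weights of $\fraksl_k$, which is precisely how the $\gl(1|1)$--natural representation embeds into the $\fraksl_k$--natural representation. One then checks that the Serre subcategory and the ideal used in \cite{miophd2, Sartori} to cut out $C(n)$ from $\catO(\gl_n)$ both factor through the construction of $\calQ^\lambda_I$: the Serre subcategory sits inside the Serre subcategory defining $\calQ^\lambda_I$, and the ideal defining the quotient pulls back compatibly, so $C(n)$ is a Serre subquotient of $\calQ^\lambda_I$ itself rather than merely of $\catO(\gl_n)$.

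The main obstacle is precisely this bookkeeping of blocks and of the two layers of subquotient: identifying, within the combinatorial data indexing $\calQ^\lambda_I$, the subset corresponding to $\{\uparrow,\downarrow\}$-sequences and verifying that the Serre subcategory and quotient used in \cite{miophd2, Sartori} to single out $C(n)$ restrict consistently. In the graded case there is the extra task of checking that the restriction is compatible with the Koszul grading on $\catO(\gl_n)$ under which $A_n$ was originally lifted; once this is done, the resulting graded subquotient of the graded $\calQ^\lambda_I$ is equivalent to $\gmod{A_n}$, giving the full statement.
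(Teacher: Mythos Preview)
Your approach is essentially the same as the paper's, but you are overcomplicating the second step and missing the simplification that makes the paper's proof a single sentence.

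In the special case $\boldlambda=(\varpi_1,\dotsc,\varpi_1)$, each $\lambda^{(i)}$ is the one-box partition, so the composition $\tau$ from \eqref{eq:23} is $(1,\dotsc,1)$ and hence $\catO^\tau=\catO(\gl_n)$. Moreover, for this multipartition every multitableau $\tableau^\boldlambda(\bolda)$ is automatically semi-standard (there are no column or row conditions on single boxes), so by \Cref{prop:4} the subcategory $\calP^\boldlambda$ contains \emph{all} indecomposable projectives. Consequently $\calQ^\boldlambda_I=\catO(\gl_n)_I$ on the nose. There is therefore no ``factoring through'' to check: once you know from \cite{miophd2,Sartori} that $C(n)$ is constructed as a Serre subquotient of (graded) $\catO(\gl_n)_I$, it is tautologically a Serre subquotient of $\calQ^\boldlambda_I$.

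The paper's proof accordingly reads in full: ``Corollary~\ref{Cor2} follows directly from Theorem~\ref{thm:4}, since the graded category $C(n)$ is by definition, see \cite{miophd2}, \cite{Sartori}, a subquotient category of $\calQZ^\boldlambda_I$, with $V(\lambda^{(i)})=V(\varpi_1)$.'' Your ``main obstacle'' of matching up two layers of subquotient and checking grading compatibility is not an obstacle at all here; it dissolves once you observe that $\calQ^\boldlambda_I$ is the full $\catO(\gl_n)_I$ in this case.
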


Let us now briefly discuss the main idea of the construction of the categorification, which is more or less implicit in \cite[Section~8]{2013arXiv1309.3796W}. An easy but important observation is that if \(\varpi_1,\dotsc,\varpi_r\) denote the fundamental weights of \(\fraksl_k\) and \(\lambda=a_1 \varpi_1 + \dotsb + a_{k-1} \varpi_{k-1}\) for \(a_j \in \Z_{\geq 0}\) an integral dominant weight, then we have an embedding of representations
\begin{equation}\label{eq:3}
    \begin{tikzpicture}[baseline=(current bounding box.center)]
  \node (A) at (0,0) {  $V(\lambda)$}; 
  \node (B) at (3,0) { $\displaystyle \bigotimes_{r=1}^{k-1} V(\varpi_r)^{\otimes a_r} $};
  \node (C) at (6,0) { $\displaystyle \bigotimes_{l=1}^{\sum_r a_r r} V,$};
  \path[right hook->] (A) edge (B);
  \path[right hook->] (B) edge (C);
\end{tikzpicture}
\end{equation}
where \(V(\lambda)\) is the irreducible (finite-dimensional) \(\fraksl_k\)--representation of highest weight \(\lambda\) and \(V=V(\varpi_1)\) is the vector representation. If we consider \(\lambda\) as a partition and let \(\enne=\abs{\lambda}=\sum_r a_r r\), then the categorification of \cref{eq:3} becomes
\begin{equation}\label{eq:112}
    \begin{tikzpicture}[baseline=(current bounding box.center)]
  \node (A) at (0,0) {  $\calQ^\lambda_I$}; 
  \node (B) at (3,0) { $\catO^\lambda(\gl_\enne)_I$};
  \node (C) at (6,0) { $\catO(\gl_\enne)_I,$};
  \path[<<-] (A) edge (B);
  \path[right hook->] (B) edge (C);
\end{tikzpicture}
\end{equation}
where \(\catO^\lambda(\gl_\enne)\) denotes parabolic category \(\catO\) attached to the standard parabolic subalgebra of type \(\gl_{\lambda_1} \oplus \dotsb \oplus \gl_{\lambda_\ell}\). Here the subscript \(I=\{1,\dotsc,k\}\) denotes the restriction to the blocks where (shifted) highest weights are sequences in \(I^n\).

Now, if \(\boldlambda=(\lambda^{(1)},\dotsc,\lambda^{(m)})\) is a sequence of integral dominant weights for \(\fraksl_k\), then the outer tensor product \(\calQ^{\lambda^{(1)}}_I \boxtimes \dotsb \boxtimes \calQ^{\lambda^{(m)}}_I\) gives a categorification of \(V(\boldlambda)=V(\lambda^{(1)}) \otimes \dotsb \otimes V(\lambda^{(m)})\) as an \(\fraksl_k^{\oplus m}\)--categorification. In order to get an \(\fraksl_k\) tensor product categorification, we need to glue the blocks of this categorification together. From a general and abstract point of view, it is a very interesting and challenging problem how this gluing should work. In our setting, we have at our disposal all the power of the BGG category \(\catO\), and it turns out that  the gluing procedure is given by parabolic induction; in some special cases, this is already visible in \cite{MR1714141} and \cite{MR2305608}. In detail, if \(\enne_i = \abs{\lambda^{(i)}}\) then \(\calQ^{\lambda^{(1)}}_I \boxtimes \dotsb \boxtimes \calQ^{\lambda^{(m)}}_I\) can be identified with a subcategory of \(\catO(\frakl)\), where \(\frakl = \gl_{\enne_1} \oplus \dotsb \oplus \gl_{\enne_m}\). The categorification of \(V(\boldlambda)\) is then essentially the image of the parabolic induction \(\Delta = U(\gl_\enne) \otimes_\frakp \blank\) inside \(\catO(\gl_\enne)\), where \(\enne=\enne_1+\dotsb+\enne_m\) and \(\frakp \subseteq \gl_\enne\) is the standard parabolic subalgebra with Levi factor \(\frakl\). We believe that this construction can give a better insight into tensor product categorifications from an abstract point of view.

\subsection*{Structure of the paper}

In \Cref{sec:comb-repr} we recall basic results on finite-dimensional representation theory of \(\fraksl_k\) and of its quantum enveloping algebra \(U(\fraksl_k)\).  \Cref{sec:combinatorics} fixes our conventions for  partitions and Young tableaux, while \Cref{sec:serre-subc-serre} is dedicated to basic facts about Schurian categories, Serre subcategories, quotient categories and standardly stratified categories.
In \Cref{sec:gener-parab-categ}, the technical heart of the paper, we will define the relevant subquotient categories of \(\catO\). We will also define the standardization functor \(\Delta\) via parabolic induction, and we will prove that it defines a standardly stratified structure. In \Cref{sec:acti-degen-affine} we will recall from \cite{MR1652134} the action of the degenerate affine Hecke algebra on translation functors of \(\catO\) and we will define categorical \(\fraksl_k\)--actions according to \cite{2008arXiv0812.5023R} and \cite{2013arXiv1303.1336L}. Using all this machinery we discuss finally in \Cref{sec:categ-simple-repr,sec:categ-tens-prod} the categorification of simple \(\fraksl_k\)--representations and tensor products of simple representations, respectively. Finally, in order to lift our categorification to an \(U_q(\fraksl_k)\)--one, we recall in \Cref{sec:graded-lift-soergel} the basics about graded category \(\catO\) and discuss in \Cref{sec:grad-categ-1} graded categorifications.

\subsubsection*{Acknowledgements}
\label{sec:acknowledgements}

The authors would like to thank Ben Webster for useful explanations on his categorification, grading and relations with category \(\catO\). 

\section{\texorpdfstring{\(U_q(\fraksl_k)\)}{Uq(sl(k))} and its representations}
\label{sec:comb-repr}

Throughout  the whole paper the base field is $\mathbb{C}$ and we fix a positive integer \(k\) and set \(I=\{1,\dotsc,k\}\). We denote by \(\fraksl_k=\fraksl_k(\C)\) the simple Lie algebra of all traceless \(k \times k\) complex matrices.

\subsection{Representation theory of \texorpdfstring{\(\fraksl_k\)}{sl(k)}}
\label{sec:repr-theory-fraksl_k}

Let \(\tilde \frakd\) denote the diagonal matrices and \(\frakd = \tilde \frakd \cap \fraksl_k\) the traceless diagonal matrices. If \(\tilde\updelta_1,\dotsc,\tilde\updelta_k\) is the standard basis of \(\tilde \frakd^*\), dual to the standard basis of monomial matrices of \(\tilde\frakd\), we denote by \(\updelta_1,\dotsc,\updelta_k\) the restrictions of \(\tilde \updelta_1,\dotsc,\tilde \updelta_k\) to \(\frakd^*\). (We use this special notation for \(\fraksl_k\) because we want to keep the usual notation for the Lie algebra \(\gl_\enne\), which will appear in \Cref{sec:gener-parab-categ}.)

Let \(\alpha_i=\updelta_i - \updelta_{i+1}\) for \(i=1,\dotsc,k-1\). Let $\Pi=\{\alpha_1,\ldots,\alpha_{k-1}\} \subset \frakd^*$ be the set of simple roots. Let $(\cdot,\cdot)$ denote the standard symmetric non-degenerate bilinear form on $\mathfrak d^*$.
 The simple roots satisfy
\begin{equation}
  \label{eq:1}
  (\alpha_i,\alpha_j) = a_{ij} = 
  \begin{cases}
    2 & \text{if } i=j,\\
    -1 & \text{if } \abs{i-j}=1,\\
    0 & \text{if } \abs{i-j}>1.
  \end{cases}
\end{equation}
Let $\{\varpi_i \suchthat i =1,\ldots,k-1\}\subset \mathfrak d^*$ be the dual basis to $\Pi$ with respect to the bilinear form. Let $\weightssl=\bigoplus_{i=1}^{k-1} \Z \varpi_i$ be the weight lattice and $\weightssl^+ = \bigoplus_{i=1}^{k-1} \Z_{\geq 0} \varpi_i$ be the set of integral dominant weights. By definition, an \emph{integral dominant  weight} \(\varpi\) is an integral combination \(\varpi=c_1 \varpi_1 + \dotsb + c_{k-1} \varpi_{k-1}\) of fundamental weights with non-negative coefficients \(c_i \in \Z_{\geq 0}\). 

As well-known, the category of finite-dimensional representations of \(\fraksl_k\) is semisimple, and the isomorphism classes of simple objects are in bijection with \(\weightssl^+\) by taking highest weights. For \(\lambda \in \weightssl^+\) we denote by \(V(\lambda)\) the irreducible \(\fraksl_k\)--module with highest weight \(\lambda\).

\begin{example}
  The representation \(V=V(\varpi_1)=\C^k\) is  the \emph{vector
  representation} or \emph{natural representation} and a special example of the \emph{fundamental representations}
  \(V(\varpi_i)\cong \bigwedge^iV(\varpi_1)\)  for \(i=1,\dotsc,k-1\) . \label{ex:2}
\end{example}

Given an arbitrary \(  \lambda= c_1 \varpi_1 + \cdots + c_{k-1} \varpi_{k-1} \in \weightssl^+\)
set
\begin{equation}
  \label{eq:5}
  \widetilde V(\lambda) = \bigotimes_{r=1}^{k-1} V(\varpi_r)^{\otimes c_r}.
\end{equation}

Since the highest weight of \(\widetilde V(\lambda)\) is \(\lambda\),  there is an embedding of \(V(\lambda)\) into \(\widetilde V(\lambda)\). Hence
\begin{equation}
  \label{eq:31}
  V(\lambda) \hookrightarrow \widetilde V(\lambda) \hookrightarrow V^{\otimes \sum_r c_r r}.
\end{equation}
It follows in particular that each finite-dimensional \(\fraksl_k\)--representation is a subrepresentation of a tensor power of the vector representation.

\subsection{The quantum group \texorpdfstring{\(\Uqslk\)}{Uq(sl(k))}}

The {\em quantum group} $\Uqslk$ is the
Hopf algebra over $\C(q)$ generated by $\{E_i,F_i,K_i,K_i^{-1} \suchthat i=1,\ldots,k-1\}$ subject to some relations which we do not want to recall here (see for example \cite{MR2628852}).

We restrict ourselves to type I finite-dimensional representations of $\Uqslk$, that is weight representations \(W= \bigoplus W_\mu\) with
\begin{equation}
W_\mu = \{ v_\mu \in W \suchthat K_i v_\mu = q^{(\alpha_i, \mu)} v_\mu \text{ for all } i =1,\dotsc,k-1\}.\label{eq:102}
\end{equation}
Then the finite-dimensional representation theory of \(U_q(\fraksl_k)\) is analogous to the one of \(\fraksl_k\). In particular, such finite-dimensional representations are semisimple with the irreducible ones parametrized by their highest weights (see \cite{MR1359532} or \cite{MR2759715}). For \(\lambda \in \weightssl^+\). We denote by \(V_q(\lambda)\) be the simple \(U_q(\fraksl_k)\)--module of highest weight \(\lambda\).
\begin{example}
  \label{ex:3}
  The $k$--dimensional vector representation of $\Uqslk$ is the
  representation $V_q=V_q(\varpi_1)$. Explicitly, $V_q$ has
  standard basis $\{v_1,\ldots,v_k\}$ over $\C(q)$, and the action of
  $\Uqslk$ is given by
  \begin{equation}
    \label{eq:115}
    E_i v_j  = \delta_{i+1,j} v_{i}, \qquad
    F_i v_j  = \delta_{ij} v_{i+1}, \qquad
    K_i^{\pm1} v_j = q^{\pm(\delta_{ij}- \delta_{i+1,j})} v_{j},
  \end{equation}
  where \(\delta_{ij}\) is the Kronecker function. Let \(\bigwedge_q^r V_q\) be the subspace of \(\bigotimes^r V_q\) on basis
  \begin{equation}
    \label{eq:12}
    v_{j_1} \wedge \cdots \wedge v_{j_r} = \sum_{\sigma \in \bbS_r} (-1)^{\sgn(\sigma)} q^{\len(\sigma)} v_{j_{\sigma(1)}} \otimes \cdots \otimes v_{j_{\sigma(r)}}
  \end{equation}
for \(k \geq j_1 \geq \dotsb \geq j_r \geq 1\). One can check that   the action of $\Uqslk$ on \cref{eq:12} is given by the formulas
  \begin{equation}
    \label{eq:47}
    \begin{aligned}
      E_i (v_{j_1} \wedge \cdots \wedge v_{j_r})  & =
      \begin{cases}
        v_{j_1} \wedge \cdots \wedge v_{j_h - 1} \wedge \cdots \wedge v_{j_r} & \text{if some } j_h=i+1\\
        0 & \text{otherwise},
      \end{cases}\\
      F_i (v_{j_1} \wedge \cdots \wedge v_{j_r})  & =
      \begin{cases}
        v_{j_1} \wedge \cdots \wedge v_{j_h + 1} \wedge \cdots \wedge v_{j_r} & \text{if some } j_h=i\\
        0 & \text{otherwise},
      \end{cases}\\
      K_i^{\pm 1} (v_{j_1} \wedge \cdots \wedge v_{j_r})  & = q^{\pm \big( \sum_{h=1}^r \delta_{i, j_h}- \delta_{i+1,j_h} \big)}\, v_{j_1} \wedge \cdots \wedge v_{j_r},
    \end{aligned}
\end{equation}
and hence \(\bigwedge_q^r V_q\) is a \(U_q(\fraksl_k)\)--subrepresentation. It is irreducible and has highest weight \(\varpi_r\). Hence \(\bigwedge_q^r V_q\) is isomorphic to \(V(\varpi_r)\), and is called the \emph{\(r\)--th fundamental representation} of \(\Uqslk\). (For the general definition of the exterior power in the quantized setting, see for example \cite{MR2386232} or \cite{2012arXiv1210.6437C}.)
 \end{example}

Given an arbitrary \(
  \lambda= c_1 \varpi_1 + \cdots + c_{k-1} \varpi_{k-1} \in \weightssl^+\), analogously to \cref{eq:5},
set
\begin{equation}
\label{eq:113}
  \widetilde V_q(\lambda) = \bigotimes_{r=1}^{k-1} V_q(\varpi_r)^{\otimes c_r}.
\end{equation}
As in the non-quantized case, there is an embedding
\(  V_q(\lambda) \hookrightarrow \widetilde V_q(\lambda) \hookrightarrow V_q^{\otimes \sum_r c_r r}\).

\section{Combinatorics of partitions and tableaux}
\label{sec:combinatorics}

\subsection{Partitions and Young diagrams}
\label{sec:part-young-diagr}

A \emph{partition} \(\lambda\) of \(\enne\) is a non-increasing sequence \((\lambda_1,\dotsc,\lambda_\ell)\) of positive numbers with \(\lambda_1+\dotsc+\lambda_\ell=\enne\) for some \(\ell \geq 0\). We write \(\abs{\lambda}=\enne\). We denote by \(\Part(\enne)\) the set of partitions of \(\enne\). To a partition we associate a Young diagram, which we also denote by \(\lambda\), as in the following picture:
\begin{equation*}
  \begin{tikzpicture}[scale=0.5,x={(0,-1cm)},y={(-1cm,0)}]
    \draw (0,0) rectangle (5,-1);
    \draw (0,-1) rectangle (4,-2);
    \draw (0,-3.5) rectangle (2,-4.5);
    \node at (1,-2.75) {$\cdots$};
    \node at (2.5,-0.5) {$\lambda_1$};
    \node at (2,-1.5) {$\lambda_2$};
    \node at (1,-4) {$\lambda_\ell$};
  \end{tikzpicture}
\end{equation*}
For example, if \(\lambda=(4,4,2,1,1)\) then the corresponding Young diagram is 
  \begin{equation*}
    \ytableausetup{centertableaux}
  \ydiagram{5,3,2,2}
 \end{equation*}

The transposed \(\lambda^T\) of a partition corresponds to the Young diagram reflected across the anti-diagonal. For example, the transposed of \((4,4,2,1,1)\) is \((5,3,2,2)\).

\begin{remark}
   \label{rem:4}
   To match the combinatorics of the category \(\catO\), our convention is transposed to the usual one in the literature.
 \end{remark}

Given  \(\varpi= c_1 \varpi_1 + \dotsb + c_{k-1} \varpi_{k-1} \in \weightssl^+\) we set \(\lambda^T_j=c_j+\dotsb+c_{k-1}\) for \(j=1,\dotsc,k-1\) and so associate to \(\varpi\) a partition \(\lambda\) with at most \(k-1\) rows. 
Graphically:
\begin{equation*}
  \begin{tikzpicture}[scale=0.5]
    \draw (0,0) rectangle (12,-1);
    \draw (0,-1) rectangle (9,-2);
    \draw (0,-2) rectangle (6,-3);
    \draw (0,-4.5) rectangle (3,-5.5);
    \node at (1,-3.5) {$\vdots$};
    \draw [yshift=-0.2cm,decorate,decoration={brace,mirror}] (9,-1) -- node[below=0.1cm] {$c_1$} ++(3,0);
    \draw [yshift=-0.2cm,decorate,decoration={brace,mirror}] (6,-2) -- node[below=0.1cm] {$c_2$} ++(3,0);
    \draw [yshift=-0.2cm,decorate,decoration={brace,mirror}] (0,-5.5) -- node[below=0.1cm] {$c_{k-1}$} ++(3,0);
  \end{tikzpicture}
\end{equation*}
This defines a bijection between the set of integral dominant weights for \(\mathfrak{sl}_k\) and the set of partitions with at most \(k-1\) rows. From now on, we will just identify them.

\subsection{Tableaux}
\label{sec:tableaux-1}

A \emph{tableau} of \emph{shape} \(\lambda\) is obtained by filling the boxes of a Young diagram \(\lambda\) with integer numbers. A tableau is
\begin{itemize}
\item \emph{column-strict} if the entries are
  strictly increasing along the columns,
\item \emph{semi-standard} if it is column-strict and the entries are non-decreasing along the rows,
\item \emph{standard} if it is column-strict and the entries are strictly increasing along the rows.
\end{itemize}

The (reversed column) \emph{reading word} \(\bolda\) attached to a tableau is obtained by reading columnwise from the left and bottom. For example, by reading the tableau
  \begin{equation}\label{eq:109}
    \ytableausetup{centertableaux}
   \ytableaushort{367,25,14}
  \end{equation}
we obtain the sequence \(\bolda=(1,2,3,4,5,6,7)\). Conversely, given a sequence \(\bolda=(a_1,\dotsc,a_\enne)\) of integer numbers, we denote by \(\tableau^\lambda(\bolda)\) the tableau of shape \(\lambda\) obtained by filling the boxes of \(\lambda\) with the numbers of \(\bolda\) first along the columns and then along the rows, starting from the bottom left corner. For example, if \(\lambda=(3,3,1)\)  and \(\bolda=(1,2,3,4,5,6,7)\) then \cref{eq:109} is the tableau \(T^\lambda(\bolda)\).

We denote by \(\St^\lambda(I)\) the set of semi-standard tableaux with entries \(I\). Recall the following well-known fact (see e.g.\ \cite[Section~5]{MR1357199} or \cite[Chapter~8]{MR1464693}):

\begin{lemma}
  \label{lem:22}
  The dimension of \(V(\lambda)\) is equal to the cardinality of \(\St^\lambda(I)\).
\end{lemma}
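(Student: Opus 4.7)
The plan is to establish this dimension formula via Weyl's character formula and the combinatorial description of Schur polynomials. Since both sides depend only on the partition $\lambda$ (identified with the dominant weight $\varpi$ as in Section~3.1), it suffices to show they compute the same polynomial in auxiliary variables and then specialize.

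First, I would compute the formal character of $V(\lambda)$ with respect to the diagonal Cartan $\frakd$. By the Weyl character formula, this character equals the bialternant
\[
\operatorname{ch} V(\lambda) = \frac{\det\bigl(x_i^{\lambda^T_j + k - j}\bigr)_{i,j=1}^{k}}{\det\bigl(x_i^{k-j}\bigr)_{i,j=1}^{k}},
\]
where $x_i$ corresponds formally to the weight $\updelta_i$ (one introduces a dummy row to make $\lambda^T$ have length $k$). This is exactly the classical definition of the Schur polynomial $s_{\lambda^T}(x_1,\dotsc,x_k)$ associated with the partition $\lambda^T$ (recall from Remark~3.1 that our transposition convention differs from the standard one, so the Schur polynomial here is indexed by $\lambda^T$ in the usual convention, i.e.\ by the partition with rows of lengths $c_1,\dotsc,c_{k-1}$).

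Second, I would invoke the combinatorial formula for Schur polynomials (see, e.g., Fulton--Harris or Macdonald),
\[
s_{\lambda^T}(x_1,\dotsc,x_k) = \sum_{T} \prod_{i \in I} x_i^{m_i(T)},
\]
where the sum runs over semistandard tableaux of shape $\lambda^T$ in the standard sense (entries weakly increasing along rows, strictly increasing down columns) with entries in $I$, and $m_i(T)$ counts occurrences of $i$ in $T$. Transposing, these are in bijection with our $\St^\lambda(I)$ from Section~3.2, since our column-strict/row-nondecreasing convention for tableaux of shape $\lambda$ matches the standard convention applied to the transposed diagram $\lambda^T$.

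Third, specializing all $x_i = 1$, the left-hand side becomes $\dim V(\lambda)$ and the right-hand side becomes $|\St^\lambda(I)|$, yielding the claim. The only subtle point—really the whole substance of the lemma—is matching up the two transposition conventions (the paper's nonstandard one from Remark~3.1 versus the classical Schur polynomial convention) so that the tableaux counted on the combinatorial side are exactly those in $\St^\lambda(I)$; once this bookkeeping is straight, everything reduces to the well-known identity above. Alternatively, one could avoid character theory entirely and quote the existence of the Gelfand--Tsetlin basis of $V(\lambda)$, which is known to be parametrized by SSYT.
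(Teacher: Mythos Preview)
Your argument is correct: the Weyl character formula identifies \(\operatorname{ch} V(\lambda)\) with the Schur polynomial \(s_{\lambda^T}(x_1,\dotsc,x_k)\), the combinatorial expansion of the Schur polynomial is a sum over standard-convention semistandard tableaux of shape \(\lambda^T\), and under the paper's transposed convention (Remark~3.1) these are exactly the elements of \(\St^\lambda(I)\); specializing \(x_i=1\) gives the claim. The alternative via Gelfand--Tsetlin bases (or, equivalently, crystal bases) is also fine.

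The paper itself, however, does not prove this lemma at all: it is stated as a well-known fact, with references to Kashiwara's notes on crystal bases and Fulton's \emph{Young tableaux}. So your write-up is not so much a different route as a fleshed-out version of what the paper leaves to the literature. The one thing to be careful about, as you note, is the convention bookkeeping: in the paper's pictures the Young diagram of \(\lambda\) is literally drawn as the usual English diagram of \(\lambda^T\), so the paper's ``column-strict, row non-decreasing'' condition on a tableau of shape \(\lambda\) coincides on the nose with the standard semistandard condition on the displayed shape \(\lambda^T\). Once that identification is made, there is no further transposition to perform, and your appeal to the classical formula for \(s_{\lambda^T}\) goes through directly.
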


Let \(\lambda\) be a partition of \(\enne\) and  \(\boldd=(d_1,\dotsc,d_\enne)\) a sequence of integers. A tableau of shape \(\lambda\) and type \(\boldd\) is a tableau \(T^\lambda(\bolda)\), where \(\bolda\) is a permutation of \(\boldd\). We denote by \(\St^\lambda(\boldd)\) the set of semi-standard tableaux of shape \(\lambda\) and type \(\boldd\).

\subsection{Multipartitions and multitableaux}
\label{sec:mult-mult}

A \emph{multipartition} of \(\enne\) is a sequence \(\boldlambda =(\lambda^{(1)},\dotsc,\lambda^{(m)})\) of partitions with \(\abs{\lambda^{(1)}}+\dotsb+\abs{\lambda^{(m)}}=\enne\). We denote by \(\Part^m(\enne)\) the set of multipartitions of \(\enne\) with \(m\) parts. A multitableau of shape \(\boldlambda\) is a sequence \(\boldT=(T^{(1)},\dotsc,T^{(m)})\) of tableaux such that \(T^{(j)}\) is of shape \(\lambda^{(j)}\).  It is called column-strict (respectively, semi-standard or standard) if all the \(T^{(j)}\) are column-strict (respectively, semi-standard or standard).

The (reversed column) reading word attached to a multitableau  is the concatenation of the reading words attached to the single tableaux. If \(\boldlambda\) is a multipartition then we denote by \(\tableau^\boldlambda(\bolda)\) the multitableau of shape \(\boldlambda\) obtained by filling the boxes of the partitions of \(\boldlambda\) with the entries of \(\bolda\), starting with the Young diagram of \(\lambda^{(1)}\).

As before, \(\St^\boldlambda(I)\) denotes the set of semi-standard multitableaux of shape \(\boldlambda\) filled with entries from \(1\) to \(k\). If \(\boldd=(d_1,\dotsc,d_n)\) then a multitableau of shape \(\boldlambda\) and type \(\boldd\) is a multitableau \(T^\boldlambda(\bolda)\), where \(\bolda\) is a permutation of \(\boldd\). We denote by  \(\St^\boldlambda(\boldd)\) the set of semi-standard multitableaux of shape \(\boldlambda \) and type \(\boldd\).

\section{Preliminaries on category theory}
\label{sec:serre-subc-serre}

We will mostly denote categories by calligraphic letters like \(\calA, \calB, \ocalC\) and functors by capital letters in \emph{sans-serif}, like \(\sfF, \sfG, \sfT\). 

Let \(\calA\) be an abelian \(\C\)--linear category. Then \(\calA\) is said to be \emph{finite} if 
\begin{enumerate*}[(i)]
  \item the homomorphism spaces are finite dimensional,
  \item all objects are of finite length, and
  \item there are only finitely  many simple objects up to isomorphism, each of which has a projective cover.
\end{enumerate*}
We always assume that the endomorphism algebras of simple objects are one-dimensional.
These requirement suffice to ensure that \(\calA\) is equivalent to the category of finite-dimensional modules over a finite-dimensional \(\C\)--algebra. 

All our categories will be direct sums of finite abelian categories,
 or in other words their blocks will be finite abelian categories. 
The constructions and results of this section, which we explain and state for finite abelian categories only, apply directly to their direct sums by considering blocks.

We will denote by \([\calA]\) the complexified Grothendieck group of \(\calA\). In particular,   \([\Vect]\cong\mathbb{C}\) for the category \(\Vect\) of finite-dimensional complex vector spaces. 

\subsection{Serre subcategories and Serre quotient categories}
\label{sec:serre-subc-serre-1}

Let \(\calA\) be a finite abelian category, and let \(\{L(\lambda) \suchthat \lambda \in \Lambda\}\) be (a complete set of representatives for the isomorphism classes of) the simple objects of \(\calA\).  Let \(P(\lambda) \) be the projective cover of \(L(\lambda)\), and let \(A=\End\big(\bigoplus_{\lambda \in \Lambda}P(\lambda)\big)\). Then \(\calA \cong \rmod{A}\).

Given \(\Gamma \subseteq \Lambda\), the \emph{Serre subcategory} \(\calS_\Gamma\) is the full subcategory of \(\calA\) consisting of object with composition factors of the type \(L(\gamma)\) for \(\gamma \in \Gamma\). For \(M \in \calA\) let \(\sfZ_\Gamma (M)\) be the maximal quotient of \(M\) lying in \(\calS_\Gamma\). It is easy to show that this defines a functor \(\sfZ_\Gamma\colon \calA \mapto \calS_\Gamma\), which we call  \emph{Zuckerman functor}. This is left adjoint to the inclusion functor, and hence is right exact.

Let \(\calA/\calS_\Gamma\) denote the Serre quotient (see \cite{MR0232821}). It is an abelian category and comes with an exact quotient functor \(\sfC_\Gamma\colon \calC \mapto \calC/\calS_\Gamma\), which we call \emph{coapproximation functor}. We have \(\sfC_\Gamma(L(\lambda))=0\) if and only if \(\lambda \in \Gamma\), and \(\{\sfC_\Gamma L(\lambda) \suchthat \lambda \in \Lambda -\Gamma\}\) gives a full set of simple objects in \(\calC/\calS_\Gamma\) up to isomorphism. We have an equivalence of categories \(\calC/\calS_\Gamma \cong \rmod{\End\big( \bigoplus_{\lambda \in \Lambda - \Gamma} P(\lambda)\big)}\),  see \cite[Proposition~33]{MR2774639}. Notice that \(\End\big( \bigoplus_{\lambda \in \Lambda - \Gamma} P(\lambda)\big)\) is an idempotent truncation \(eAe\) of the algebra \(A=\End\big(\bigoplus_{\lambda \in \Lambda}P(\lambda)\big)\) for some idempotent \(e \in A\), and the quotient or coapproximation functor is given by \(\Hom_A(eA, \blank)\).

Given a projective object \(P \in \calA\) we denote by \(\operatorname{Add}(P)\) the additive subcategory of \(\calA\) additively generated by direct summands of \(P\). An object \(M \in \calA\) is  \(\operatorname{Add}(P)\)--\emph {presentable} if it has a presentation \(P \mapto Q \surto M\) with \(P,Q \in \operatorname{Add}(P)\).
 Then the category \(\calC/\calS_\Gamma\) can be identified with the full subcategory of \(\calC\) consisting of \(\operatorname{Add}\big(\bigoplus_{\lambda \in \Lambda - \Gamma} P(\lambda)\big)\)--presentable objects. The inclusion of \(\calC/\calS_\Gamma\) in \(\calC\) is not exact, but is right exact, since it is left adjoint to the coapproximation functor (see \cite{MR2139933}). Under the equivalence above, it is given by \(\blank \otimes_{eAe} eA\colon \rmod{eAe} \longrightarrow \rmod{A}\).

\subsection{Standardly stratified categories}
\label{sec:stand-strat-categ}

We recall the definition of a standardly stratified category, following \cite{2013arXiv1303.1336L}. Let \(\calA\) be as above and let \(\Xi\) be a poset with a map \(p\colon \Lambda \mapto \Xi\). For \(\lambda \in p^{-1}(\xi)\) we denote by \(L^\xi(\lambda)\) the simple object in \(\calA_{\sim \xi}\) corresponding to \(\lambda\) and by \(P^\xi(\lambda)\) its projective cover. 

For \(\xi \in \Xi\) we denote by \(\calA_{\preleq \xi}\) the Serre subcategory \(\calS_\Gamma\) where \(\Gamma=\{\lambda \suchthat p(\lambda) \preleq \xi\}\) and by \(\calA_{\prel \xi}\) the Serre subcategory \(\calS_{\Gamma'}\) with \(\Gamma' = \{ \lambda \suchthat p(\lambda) \prel \xi\}\).
Moreover, we let \(\calA_{\sim \xi}\) be the Serre quotient \(\calA_{\preleq \xi}/\calA_{\prel \xi}\) with quotient functor  \(\pi_\xi\colon \calA_{\preleq \xi} \mapto \calA_{ \sim \xi}\).

We suppose that the functor \(\pi_\xi\) has an exact left-adjoint functor, which we call \emph{standardization functor} and which we denote by \(\Delta_\xi\). We set \(\Delta(\lambda)=\Delta_\xi(P^\xi(\lambda))\) and \(\oDelta(\lambda)=\Delta_\xi(L^\xi(\lambda))\). These objects are called the \emph{standard} and \emph{proper standard} module corresponding to \(\lambda\), respectively.

\begin{definition}
  \label{def:16}
  The category \(\calA\) together with the poset \(\Xi\) and the map \(p\colon \Lambda \mapto \Xi\) is called a \emph{standardly stratified category} if for all \(\lambda\) there is an epimorphism \(P(\lambda) \surto \Delta(\lambda)\) whose kernel admits a filtration by objects \(\Delta(\mu)\) with \(p(\mu ) \preg p(\lambda)\).
\end{definition}

If moreover  \(\calA_{\sim \xi}\) is equivalent to the category of vector spaces for each \(\xi \in \Xi\), then \(\calA\) is \emph{quasi-hereditary}. We will call \(\gr \calA = \bigoplus_{\xi \in \Xi} \calA_{\sim \xi}\) the \emph{associated graded category}.

\subsection{Outer tensor product of categories}
\label{sec:outer-tensor-product}

Let \(\calA\) and \(\calB\) be finite abelian categories. According to \cite[Section~5]{MR1106898} (cf.\ also \cite[\textsection{}1.46]{Etingof_finitetensor}), their \emph{outer tensor product} \(\calA \boxtimes \calB\) is defined and  comes along with a bifunctor \(\boxtimes \colon \calA \times \calB \mapto \calA \boxtimes \calB\) which is exact in both variables and satisfies
\begin{equation}
  \label{eq:111}
  \Hom_{\calA}(M_1,M_2) \otimes \Hom_{\calB}(N_1,N_2) \cong \Hom_{\calA \boxtimes \calB}(M_1 \boxtimes N_1, M_2 \boxtimes N_2). 
\end{equation}
If \(\calA = \rmod{A}\) and \(\calB = \rmod{B}\) where \(A\) and \(B\) are finite-dimensional \(\C\)--algebras, then \(A \otimes B\) is also a finite-dimensional algebra and
\begin{equation}
  \label{eq:110}
  \rmod{A} \boxtimes \rmod{B} \cong \rmod{A \otimes B}.
\end{equation}
This implies in particular that  \(\calA \boxtimes \calB\) is again a  finite abelian category. 

Observe that this also provides an isomorphism \([\rmod{A}] \otimes [\rmod{B}] \cong [\rmod{A} \boxtimes \rmod{B}] \) via \([M] \otimes [N] \mapto [M \otimes N]\), since the simple \((A\otimes B)\)--modules are precisely the outer tensor products of a simple \(A\)--module and a simple \(B\)--module.

\section{Category \texorpdfstring{\(\catO\)}{O} and subquotient categories}
\label{sec:gener-parab-categ}

Let us fix a positive integer \(\enne\). Let \(\gl_\enne=\gl_\enne(\C)\) be
the general Lie algebra of \(\enne \times \enne\) matrices with the
standard Cartan decomposition \(\gl_\enne=\frakn^- \oplus \frakh
\oplus \frakn^+\) into strictly lower diagonal, diagonal and strictly upper diagonal matrices respectively. Let \(\frakb = \frakh \oplus \frakn^+\)
be the standard Borel subalgebra. We let \(\epsilon_1,\dotsc,\epsilon_\enne\) be the basis of \(\frakh^*\) dual to the standard basis of monomial diagonal matrices, and set
\begin{equation}
  \label{eq:62}
  \rhogl=-\epsilon_2 -2 \epsilon_3 - \dotsb - (\enne -1) \epsilon_\enne.
\end{equation}
Let  \(\weightsgl \subset \frakh^*\) denote the set of integral weights, and \(W=\bbS_n\) the Weyl group of \(\gl_n\).

The choice of basis \(\epsilon_1,\dotsc,\epsilon_\enne\) defines an isomorphism \(\frakh^* \cong \C^\enne\), which restricts to a bijection \(     a_1 \epsilon_1 + \dotsc + a_\enne \epsilon_\enne  \mapsto (a_1,a_2,\dotsc,a_\enne)\) between \(\weightsgl\) and \(\Z^\enne\).
From now on, we will identify \(\weightsgl\) with \(\Z^\enne\) and denote elements of \(\weightsgl\) by bold roman letters, like \(\bolda,\boldb,\boldd\). 

\subsection{The category \texorpdfstring{$\catO$}{O}}
\label{sec:category-cato}

We recall now some basic facts on the BGG category \(\catO\). For more details see \cite{MR2428237}.

\begin{definition}[\cite{MR0407097}]
  The integral BGG category \(\catO=\catO(\gl_\enne)=\catO(\gl_\enne;\frakb)\)
  is the full subcategory of
  \(U(\gl_\enne)\)--modules which are
  \begin{enumerate}[(O1)]
  \item\label{item:30} finitely generated as \(U(\gl_\enne)\)--modules,
  \item\label{item:31} weight modules for the action of \(\frakh\) with
    \emph{integral} weights, and
  \item\label{item:32} locally \(\frakn^+\)--finite.
  \end{enumerate}\label{def:6}
\end{definition}

We stress that we consider here only modules with integral
weights. The category \(\catO\) is Schurian  (i.e. abelian, \(\C\)--linear with enough projective and injective objects, such that all objects are of finite length and the endomorphism algebras of irreducible objects are one dimensional), and
it is obviously closed under tensoring with finite dimensional
\(\gl_\enne\)--modules.

For \(\bolda \in \weightsgl\) we denote by
\(M(\bolda) \in \catO\) the Verma module with highest weight
\(\bolda - \rhogl \) (e.g. \(M(0,\dotsc,0)\) is the most singular Verma module with highest weight \(-\rhogl\)). Let \(L(\bolda)\) denote its unique
simple quotient and \(P(\bolda)\) its projective cover. We let \(\weightsgl^+=\weightsgl^+(\gl_n)\) be the set of (shifted) integral dominant weights:
\begin{equation}
  \label{eq:64}
  \weightsgl^+(\gl_n) = \{\bolda=(a_1,\dotsc,a_\enne) \in \weightsgl \suchthat a_1 > a_2 > \dotsb > a_\enne\}.
\end{equation}
Recall that \(L(\bolda)\) is finite dimensional if and only if \(\bolda \in \weightsgl^+\).

We denote by \((w,\bolda) \mapsto w  \bolda\) 
the standard action of \(\bbS_\enne\) on \( \weightsgl=\Z^\enne\) by permutations. Then for  \(\boldd \in \weightsgl^+\), the Serre subcategory of \(\catO\) generated by \(L(w\boldd)\) for \(w \in W\) forms a block of \(\catO\) which we denote by \(\catO_\boldd\). Hence we have a block decomposition
\begin{equation}
  \label{eq:98}
  \catO = \textstyle\bigoplus_{\boldd \in \weightsgl^+} \catO_\boldd.
\end{equation}

\subsection{The parabolic category \texorpdfstring{\(\catO^\tau\)}{O}}
\label{sec:parab-categ-cato}

Let now \(\tau=(\tau_1,\dotsc,\tau_m)\) be a composition of \(\enne\) and \(\frakp_\tau \subseteq \gl_\enne\) the associated standard parabolic subalgebra with Levi factor \(\gl_\tau=\gl_{\tau_1} \times \dotsb \times \gl_{\tau_m}\) and nilpotent part \(\fraku_\tau\), so that \(\frakp_\tau = \gl_\tau \oplus \fraku_\tau\).

\begin{definition}
  The parabolic category $\catO^\tau = \catO^{\frakp_\tau}$
  is the full subcategory of
  \(U(\gl_\enne)\)--modules which are
  \begin{enumerate}[(OP1)]
  \item\label{item:27} finitely generated as \(U(\gl_\enne)\)--modules,
  \item\label{item:28} a direct sum of finite-dimensional simple modules as \(\gl_\tau\)--modules, with
    \emph{integral} weights, and
  \item\label{item:29} locally \(\fraku_\tau\)--finite.
  \end{enumerate}\label{def:8}
\end{definition}

It follows immediately that \(\catO^\tau\) is a  subcategory of \(\catO(\gl_\enne)\).  Moreover \(\catO^{(1^\enne)} = \catO(\gl_\enne)\). The block decomposition \cref{eq:98} of \(\catO\) induces a block decomposition \(\catO^\tau = \bigoplus_{\boldd \in \weightsgl^+} \catO^\tau_\boldd \). Each block \(\catO^\tau_\boldd\) is a quasi-hereditary category, where the order on the set of weights is the usual dominance order. 

The simple object \(L(\bolda)\) of \(\catO(\gl_\enne)\) is in \(\catO^\tau\) if and only if \(a_i > a_{i+1}\) whenever \(i\) and \(i+1\) are in the same component of the composition \(\tau\), or in other words if and only if \(\bolda \in \weightsgl^+(\gl_\tau)\), where \(\weightsgl^+(\gl_\tau) \subseteq \weightsgl\) denotes the set of integral dominant weights for \(\gl_\tau\). Indeed, \(\catO^\tau\) is the Serre subcategory of \(\catO\) generated by such simple modules. We will denote by \(P^\tau(\bolda)\) the projective cover of \(L(\bolda)\) in \(\catO^\tau\), which is the biggest quotient of \(P^\tau(\bolda)\) which lies in \(\catO^\tau\), that is, \(P^\tau(\bolda)=\sfZ_\tau P(\bolda)\), where \(\sfZ_\tau\colon \catO \mapto \catO^\tau\) is the Zuckerman functor.Suppose that \(\lambda\) is a partition. Then \(L(\bolda) \in \catO^\lambda\) if and only if \(\tableau^\lambda(\bolda)\) is column-strict.

\begin{example}
  Fix \(k=4\) and consider the partition \(\lambda=(3,3,2,1)\). Consider the weight \(\bolda=(3,2,1,4,3,2,2,1,3)\). Then
  \begin{equation*}
    \ytableausetup{centertableaux}
   \tableau^\lambda(\bolda)=  \ytableaushort{1213,232,34}
  \end{equation*}\label{ex:1}
  is column-strict, and indeed \(L(\bolda)\) is a simple module in \(\catO^\lambda\). 
\end{example}

\subsection{The category of presentable modules}
\label{sec:categ-pres-modul}

\begin{definition}
  \label{def:9}
  We define \(\calQ^\tau\) to be the full subcategory of \(\catO^\tau\) consisting of all objects \(M \in \catO^\tau\) which have a presentation
  \begin{equation}
    \label{eq:21}
    P_1 \longrightarrow P_2 \longrightarrow M \longrightarrow 0,
  \end{equation}
  where \(P_1,P_2 \in \catO^\tau\) are \emph{prinjective}, i.e. both,  projective and injective, objects.
\end{definition}

Let \(\boldd \in \weightsgl^+(\frakg)\) be a dominant weight. Then by definition the block \(\calQ^\tau_\boldd\) is equivalent to the category of finite-dimensional modules over the endomorphism algebra \(\End_\frakg(Q)\), where \(Q\) is the sum of the indecomposable prinjective modules of \(\catO^\tau_\boldd\) up to isomorphism. In particular, \(\calQ^\tau\) is a Serre quotient of \(\catO^\tau\) (cf.\ \cref{sec:serre-subc-serre-1}), hence inherits an abelian structure and is a Schurian category. Again, the block decomposition of \(\catO^\tau\) induces a decomposition \(\calQ^\tau = \bigoplus_{\boldd \in \weightsgl^+} \calQ^\tau_\boldd\), which we also call a \emph{block decomposition}.

Let now \(\lambda \in \Part(\enne)\) be a partition of \(\enne\).

\begin{lemma}
  \label{lem:3}
  The indecomposable projective module \(P^\lambda(\bolda)\) of \(\catO^\lambda\) is also injective (i.e. prinjective) if and only if the tableau \(\tableau^\lambda(\bolda)\) is semi-standard.
\end{lemma}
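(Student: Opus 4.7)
The plan is to combine the classical characterization of projective-injective (``prinjective'') modules in parabolic category $\catO$ with the combinatorics of the filling rule $\bolda\mapsto T^\lambda(\bolda)$. Writing $\tau=(\lambda_1,\dotsc,\lambda_\ell)$ for the composition of column heights of $\lambda$, we have $\catO^\lambda = \catO^\tau$ by definition, so the question is purely one about parabolic $\catO^\tau$.

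For the algebraic side, I would invoke the following standard description (going back to Irving's study of socles of parabolic Vermas, and refined in later work on projective-injectives in parabolic category $\catO$): in a block $\catO^\tau_\boldd$, the projective $P^\tau(\bolda)$ is injective if and only if $\bolda$ is the maximal (with respect to the usual dominance order) $W_\tau$-dominant element of its double coset class in $W_\tau\backslash (W\cdot\boldd) /W_\boldd$. Equivalently, no adjacent transposition $s_i$ with $i,i+1$ in different parts of $\tau$, once composed with a suitable reordering by an element of $W_\tau$, produces a $W_\tau$-dominant weight strictly greater than $\bolda$ in its $W$-orbit. This characterization may be extracted from BGG reciprocity together with the self-duality of $\catO^\tau$, the standardization functor $\Delta$ just introduced in Section~\ref{sec:gener-parab-categ} providing the parabolic Verma flag language needed to phrase the self-duality condition.

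To pass to the tableau side, recall that by the filling rule the $j$-th $\tau$-block of $\bolda$ is placed into the $j$-th column of $T^\lambda(\bolda)$, bottom-to-top. Under this rule, $W_\tau$-dominance (strict decrease within each $\tau$-block) corresponds exactly to column-strictness. For two adjacent columns $j$ and $j+1$ of heights $\tau_j\geq\tau_{j+1}$, the boxes lying in a common row (for $i=1,\dotsc,\tau_{j+1}$) correspond to entries of $\bolda$ at positions $t_j+\tau_j-i+1$ and $t_{j+1}+\tau_{j+1}-i+1$ (with $t_j=\tau_1+\dotsb+\tau_{j-1}$); a straightforward bookkeeping check shows that these pairs are precisely the ones swapped, after a compensating $W_\tau$-reordering within the column, by single transpositions in $W\setminus W_\tau$ that could enlarge $\bolda$. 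The maximality condition of the previous paragraph is therefore equivalent to the system of row-wise inequalities declaring that, for each such common row, the entry in column $j$ is at most the entry in column $j+1$---and this is exactly semi-standardness.

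The main obstacle is the algebraic step: establishing the prinjective characterization cleanly within the Lie-theoretic framework of this paper requires a careful use of duality on $\catO^\tau$ and parabolic BGG reciprocity, without appealing to external diagrammatic machinery. Once the maximality criterion is available, the combinatorial translation is a routine bookkeeping exercise using the explicit definition of $T^\lambda(\bolda)$, with the converse (failure of a row inequality exhibits an explicit enlarging swap) essentially immediate.
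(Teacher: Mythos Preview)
Your proposal has a genuine gap at the algebraic step: the ``maximality'' criterion you formulate for prinjectivity is incorrect, and the justification you offer for it (BGG reciprocity plus self-duality of $\catO^\tau$) does not yield any such statement.

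Concretely, take a regular block, so that $W_\boldd$ is trivial. Then the double cosets in $W_\tau\backslash (W\cdot\boldd)/W_\boldd$ are just the left $W_\tau$--cosets, and each contains exactly one $W_\tau$--dominant weight. Your criterion would therefore declare \emph{every} $P^\tau(\bolda)$ to be prinjective, which is false already for $\gl_2$ with $\tau=(1,1)$: only the antidominant projective is injective. Your rephrasing in terms of ``no adjacent transposition produces a strictly greater $W_\tau$--dominant weight'' fails in the same example with the opposite sign. The characterization of prinjectives in parabolic category $\catO$ is genuinely a Kazhdan--Lusztig cell phenomenon (equivalently, in Irving's language, a statement about which simples occur in socles of parabolic Vermas), and this does not reduce to an elementary dominance-order maximality in a coset. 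BGG reciprocity and duality tell you that $P^\tau(\bolda)$ is injective iff $L(\bolda)$ lies in the socle of some parabolic Verma, but identifying those $\bolda$ is exactly the hard part, and in general type requires the cell machinery.

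The paper's argument is accordingly quite different from what you propose. For a regular block it quotes \cite[Theorem~5.1]{MR2369489}: $P^\lambda(w\boldd)$ is prinjective iff $w$ lies in the \emph{right Kazhdan--Lusztig cell} of the longest element $w_\lambda$ of the minimal coset representatives for $\bbS_\lambda\backslash\bbS_n$. Then it invokes the Kazhdan--Lusztig/RSK description of cells in type $A$ \cite{MR560412}: two permutations share a right cell iff they have the same recording tableau, and one checks directly that $T_r(w_\lambda)=T^\lambda(w_\lambda\boldd)$ and that $T_r(w)=T^\lambda(w_\lambda\boldd)$ with $T_i(w)=T^\lambda(w\boldd)$ precisely when $T^\lambda(w\boldd)$ is standard. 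This is the step that replaces your (incorrect) maximality criterion, and it is where the nontrivial input (cell theory, RSK) enters. The singular case is then reduced to the regular one via translation functors on and off the wall, using that these preserve prinjectivity and that the combinatorics of ``semi-standard'' for the singular weight matches ``standard'' for the corresponding regular weight.

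If you want to salvage a version of your approach, you would need to replace the double-coset maximality statement by the actual cell-theoretic criterion (or by Irving's socle description) and then do the RSK translation; at that point the argument essentially becomes the paper's.
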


\begin{proof}
  First, consider the case of a regular weight \(\bolda\). Let \(\boldd \in \weightsgl^+(\gl_n)\) be such that \(z \boldd = \bolda\) for \(z \in W\). By \cite[Theorem~5.1]{MR2369489}, the projective module \(P^\lambda(w\boldd)\) is also injective if and only if \(w\) lies in the right cell of the longest element \(w_\lambda\) of  the  shortest left coset representatives \(\shortestcosetleft{\bbS_\lambda}{ \bbS_n}\). Let \(\tableau_i(w)\) and \(\tableau_r(w)\) denote the insertion and the recording tableaux of the Robinson-Schensted correspondence, respectively (see for example \cite[Chapter~4]{MR1464693}). Then \(w\) lies in the right cell of \(w_\lambda\) if and only if \(\tableau_r(w)=\tableau_r(w_\lambda)\), see \cite{MR560412}. One can easily notice that \(\tableau_i(w_\lambda)=\tableau_r(w_\lambda)=\tableau^\lambda(w_\lambda \boldd)\). Moreover, one gets \(\tableau_i(w)=\tableau^\lambda(w\boldd)\) and \(\tableau_r(w)=\tableau^\lambda(w_\lambda \boldd)\) if and only if \(\tableau^\lambda(w\boldd)\) is a standard tableau.\\
Now, if \(\bolda\) is singular, then the claim follows using translation functors, which are known to send prinjective modules to prinjective modules.
In detail, let \(\boldd\) be the dominant weight in the same orbit of \(\bolda\), that is \(\bolda = w \boldd\) with \(w \in W\) of minimal length. Let also \(\bbS_\boldd\) be the stabilizer of \(\boldd\) and let \(w_\boldd\) be its longest element. Let moreover \(\boldr\) be a regular dominant weight. Denote by \(\sfT_\boldr^\boldd\colon \catO_\boldr \mapto \catO_\boldd\) the translation functor. Then \(\sfT_\boldr^\boldd(P^\lambda(ww_\boldd\boldr))\) contains, as a direct summand, \(P^\lambda(\boldd)\), while  \(\sfT_\boldd^\boldr(P^\lambda(\boldd)) = P^\lambda(w w_\boldd \boldr)\) (cf.\ \cite[Chapter~7]{MR2428237}; the results in \emph{loc.\ cit.\ }can be easily adapted to the parabolic category \(\catO^\tau\) by using Zuckerman functor, which commute with translation functors). Now, by construction \(\tableau^\lambda(\boldb)\) is semi-standard if and only if \(\tableau^\lambda(w w_\boldd \boldr)\) is standard, this happens if and only if \(P^\lambda(w w_\boldd \boldr)\) is prinjective, which is the case if and only if \(P^\lambda(\boldb)\) is prinjective.
\end{proof}
It follows that the indecomposable projective modules of \(\calQ^\lambda\) are, up to isomorphism, the \(P^\lambda(\bolda)\)'s for \(\bolda\in \weightsgl\) such that \(\tableau^\lambda(\bolda)\) is semi-standard. We will denote the unique simple quotient of \(P^\lambda(\bolda)\) in \(\calQ^\lambda\) by \(S^\lambda(\bolda)\). We chose the notation \(S^\lambda(\bolda)\) (and not, for example, \(L^\lambda(\bolda)\)) in order to emphasize that \(S^\lambda(\bolda)\) is not, in general, an irreducible \(\frakg\)--module.

\subsection{Standardization functor}
\label{sec:stand-funct}

We fix a composition \(\sigma=(\enne_1,\dotsc,\enne_m)\) of \(\enne\) and abbreviate \(\frakl=\frakl_ \sigma\) and  \(\fraku=\fraku_ \sigma\), and we denote \(\mathfrak{p}_{\mathfrak{l}} = \mathfrak{p}_\sigma\) with corresponding Levi decomposition  \(\frakp_\frakl = \frakl \oplus \fraku_\frakl\). 
We denote by \(W_\frakl=\bbS_{\enne_1} \times \dotsb \times \bbS_{\enne_m} \subset W\) the Weyl group of \(\frakl\) and by \(\weightsgl\) both the integral weights of \(\frakg\) and of \(\frakl\) (since they coincide). Let \(\weightsgl^+(\frakg) \subseteq \weightsgl^+(\frakl)\subset \weightsgl\) be the dominant weights of \(\frakg\) and of \(\frakl\) respectively.

Analogously as we did for \(\gl_\enne\), one can define the category \(\catO(\frakl)\). Note that this category can be identified with the outer tensor product \(\catO(\gl_{\enne_1}) \boxtimes \dotsb \boxtimes \catO(\gl_{\enne_m})\). The purpose of this subsection is to identify this outer tensor product category with a subquotient category of \(\catO(\gl_{\enne})\).

Define the \emph{standardization functor} \(\Delta\colon \catO(\frakl) \mapto \catO(\frakg)\) by parabolic induction:
\begin{equation}
  \label{eq:24}
  \Delta(M) = U(\frakg) \otimes_{U(\frakp_\frakl)} M' = U(\frakg) \otimes_{\frakp_\frakl} M'.
\end{equation}
where $M'=M\otimes \mathbb{C}_\gamma$ is $M$ twisted by the one-dimensional representation $\mathbb{C}_\gamma$ for $\frakh$ (extended by zero to  $\frakp$), of weight $\gamma=\rho(\frakg)-\rho(\frakl)$ (i.e. the difference of the $\rho$ attached to $\frakg$ as in \eqref{eq:62} and the sums of $\rho$'s attached to the factors of $\frakl$ using the analog of formula \eqref{eq:62}).
The following is an immediate standard result:

\begin{lemma}
  \label{lem:13}
  The functor \(\Delta\) is well-defined and exact.
\end{lemma}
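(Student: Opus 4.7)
The plan is to reduce everything to the PBW decomposition. Since $\frakg = \fraku_\frakl^- \oplus \frakp_\frakl$ as vector spaces, where $\fraku_\frakl^-$ denotes the nilpotent radical of the opposite parabolic, the Poincaré–Birkhoff–Witt theorem gives an isomorphism $U(\frakg) \cong U(\fraku_\frakl^-) \otimes_\C U(\frakp_\frakl)$ of right $U(\frakp_\frakl)$-modules. In particular $U(\frakg)$ is free over $U(\frakp_\frakl)$, so the functor $U(\frakg) \otimes_{U(\frakp_\frakl)} (-)$ is exact; combined with the obvious exactness of twisting by the one-dimensional character $\C_\gamma$, this yields exactness of $\Delta$. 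Moreover, the same PBW decomposition gives a vector space isomorphism $\Delta(M) \cong U(\fraku_\frakl^-) \otimes_\C M'$, which we will use throughout the verification of well-definedness.

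Next I would verify the three axioms of \Cref{def:6}. For \ref{item:30}, if $m_1,\dotsc,m_r$ generate $M$ over $U(\frakl)$, then $1\otimes m_1,\dotsc,1\otimes m_r$ generate $\Delta(M)$ over $U(\frakg)$. For \ref{item:31}, $\gamma=\rho(\frakg)-\rho(\frakl)$ is integral and the weights of $U(\fraku_\frakl^-)$ (namely sums of roots of $\frakg$) are integral, so the tensor product decomposition above shows $\Delta(M)=\bigoplus_\lambda \Delta(M)_\lambda$ with all $\lambda$ integral.

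The only point that requires real argument is \ref{item:32}, local $\frakn^+$-finiteness. Let $v\in\Delta(M)$ be a weight vector of weight $\mu$; I wish to show that $U(\frakn^+)\cdot v$ is finite-dimensional. Using the PBW isomorphism one sees that
\[
\Delta(M)_\lambda \;\cong\; \bigoplus_{\eta-\alpha=\lambda}U(\fraku_\frakl^-)_{-\alpha}\otimes M'_\eta,
\]
where $\eta$ ranges over weights of $M'$ and $\alpha$ over $Q^+_{\fraku_\frakl}$, the monoid of non-negative integer combinations of the roots of $\fraku_\frakl$. Since $M\in\catO(\frakl)$ is finitely generated and locally $\frakn^+_\frakl$-finite, its set of weights is contained in a finite union of sets of the form $\{\eta_i-\beta:\beta\in Q^+_\frakl\}$, hence only finitely many weights of $M'$ lie above any fixed weight. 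On the other hand the $\alpha$-graded pieces $U(\fraku_\frakl^-)_{-\alpha}$ are finite-dimensional and only finitely many $\alpha\in Q^+_{\fraku_\frakl}$ satisfy $\alpha\le\eta-\mu$ for $\eta$ in a fixed finite set; combined with finite dimensionality of the weight spaces of $M$, this shows that each $\Delta(M)_\lambda$ is finite-dimensional and that only finitely many weights $\lambda\in\mu+Q^+$ occur in $\Delta(M)$. Since $U(\frakn^+)\cdot v\subseteq\bigoplus_{\lambda\in\mu+Q^+}\Delta(M)_\lambda$, it is finite-dimensional, which finishes the verification.

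I do not expect any serious obstacle; the whole argument is the classical parabolic induction calculation, and the only step meriting care is the bookkeeping of weights needed for \ref{item:32}.
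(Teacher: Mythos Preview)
Your proof is correct and follows the same approach as the paper, which also invokes the PBW theorem to deduce that $U(\frakg)$ is free over $U(\frakp_\frakl)$ and hence that $\Delta$ is exact. The only difference is that the paper dismisses well-definedness with a single word (``obviously''), whereas you actually verify the three axioms of \Cref{def:6}; your more careful treatment of \ref{item:32} via weight bookkeeping is exactly the standard argument one would supply if pressed.
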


\begin{proof}
 Obviously \(\Delta(M)\) is an object of \(\catO(\frakg)\).
By the PBW Theorem
\(U(\frakg)\) is free as a right \(U(\frakp_\frakl)\)--module, hence \(\Delta\) is exact.
\end{proof}

Fix a dominant weight \(\boldd\in \weightsgl^+(\frakg)\) and consider the block \(\catO_\boldd\).  
Let \(\Xi=\Xi_\boldd\) denote the quotient of \(W\boldd\) modulo the action of \(W_\frakl\), and let \(p=p_\boldd \colon W\boldd \surto \Xi\) be the projection. An element \(\xi \in \Xi\) is the lateral class \(W_\frakl \tilde\xi\) of a unique dominant weight \(\tilde \xi \in \weightsgl^+(\frakl)\). The dominance order \(\leq\) on weights restricted to \(\{\tilde \xi \suchthat \xi \in \Xi\}\) gives a partial order \(\preleq\) on \(\Xi\). Via the map \(p\), this induces a preorder \(\preleq\) on \(W\boldd\).
In the following, we will often write \(\catO(\frakl)_\xi\) instead of \(\catO(\frakl)_{\tilde \xi}\).

For the rest of the subsection we fix a \(\xi \in \Xi\). Let \(\catO(\frakg)_{\preleq \xi}\) (respectively, \(\catO(\frakg)_{\prel \xi}\)) be the Serre subcategory of \(\catO(\frakg)_{\boldd}\)
generated by the simple modules \(L(\bolda)\) with \(p(\bolda) \preleq \xi\) (respectively, \(p(\bolda) \prel \xi\)).
For \(\boldb \in P\) we denote also by \(\catO(\frakg)_{\leq \boldb}\) the full subcategory consisting of all modules of \(\catO(\frakg)\) with all weights smaller or equal to \(\boldb\). 

\begin{lemma}
  \label{lem:16}
  Considering  \(\tilde \xi\) as a weight for \(\frakg\), we have
 \(\catO(\frakg)_{\preleq \xi} = \catO(\frakg)_\boldd \cap \catO(\frakg)_{\leq \tilde\xi}\).
\end{lemma}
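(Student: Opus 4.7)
The plan is to reduce the equality of Serre subcategories to a comparison of their simple objects, using that both sides are Serre inside \(\catO(\frakg)_\boldd\).  The subcategory \(\catO(\frakg)_{\preleq \xi}\) is Serre by construction, and \(\catO(\frakg)_\boldd \cap \catO(\frakg)_{\leq \tilde\xi}\) is Serre because the condition that all \(\frakh\)-weights be dominated by \(\tilde\xi\) is manifestly stable under subobjects, quotients, and extensions.  Hence equality reduces to checking, for every \(\bolda \in W\boldd\), whether \(L(\bolda)\) lies in each side.  Write \(\bolda^+\) for the unique \(\frakl\)-dominant representative of the \(W_\frakl\)-orbit of \(\bolda\), so that \(p(\bolda) \preleq \xi\) is by definition \(\bolda^+ \leq \tilde\xi\) in the dominance order.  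Since all weights of \(L(\bolda)\) are bounded above by its highest weight \(\bolda\), we have \(L(\bolda) \in \catO(\frakg)_{\leq \tilde\xi}\) if and only if \(\bolda \leq \tilde\xi\), whereas \(L(\bolda) \in \catO(\frakg)_{\preleq \xi}\) if and only if \(\bolda^+ \leq \tilde\xi\).  The lemma therefore amounts to the equivalence \(\bolda \leq \tilde\xi \Leftrightarrow \bolda^+ \leq \tilde\xi\) for all \(\bolda \in W\boldd\).

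The implication \(\bolda^+ \leq \tilde\xi \Rightarrow \bolda \leq \tilde\xi\) is immediate from \(\bolda \leq \bolda^+\), which holds because the \(\frakl\)-dominant element of a \(W_\frakl\)-orbit is maximal for the \(\frakg\)-dominance order restricted to that orbit.  For the converse I would proceed by induction on the Coxeter length of an element \(\sigma \in W_\frakl\) with \(\sigma \bolda = \bolda^+\), reducing the problem to the following stability property: if \(\bolda \leq \tilde\xi\) and \(\alpha\) is a simple root of \(\frakl\), then \(s_\alpha \bolda \leq \tilde\xi\) as well.  The subcase \(\langle \bolda, \alpha^\vee\rangle \geq 0\) is immediate since then \(s_\alpha \bolda \leq \bolda\); in the nontrivial subcase \(\langle \bolda, \alpha^\vee\rangle < 0\) one has \(s_\alpha \bolda = \bolda - \langle \bolda, \alpha^\vee\rangle \alpha\), and it remains to verify that the \(\alpha\)-coefficient of \(\tilde\xi - s_\alpha \bolda\), expanded in the basis of simple roots of \(\frakg\), stays nonnegative.

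The main obstacle lies in this single-reflection step.  The partial-sum inequality coming from \(\bolda \leq \tilde\xi\) at the index corresponding to \(\alpha\) alone is insufficient; one must combine it with the adjacent partial-sum inequalities and crucially invoke \(\langle \tilde\xi, \alpha^\vee\rangle \geq 0\) coming from the \(\frakl\)-dominance of \(\tilde\xi\) to extract the required control.  Once this \(W_\frakl\)-stability is established, iterating through a reduced expression of \(\sigma\) yields \(\bolda^+ = \sigma\bolda \leq \tilde\xi\), completing the argument.
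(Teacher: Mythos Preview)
Your reduction to simple objects and the easy direction via $\bolda \leq \bolda^+$ are correct and coincide with the paper's opening moves.  The problem is the single-reflection step you isolate as the crux: it is false in the root-dominance order, so no combination of the adjacent partial-sum inequalities together with $\langle\tilde\xi,\alpha^\vee\rangle \geq 0$ can salvage it.  Take $n=4$, $\frakl = \gl_2 \oplus \gl_2$, $\boldd = (4,3,2,1)$, $\tilde\xi = (3,2,4,1)$ (which is $\frakl$-dominant), $\bolda = (1,4,3,2)$, and $\alpha = \alpha_1$.  Then $\tilde\xi - \bolda = 2\alpha_1 + \alpha_3$, so $\bolda \leq \tilde\xi$, and $a_1 = 1 < 4 = a_2$; but $s_1\bolda = (4,1,3,2)$ gives $\tilde\xi - s_1\bolda = -\alpha_1 + \alpha_3$, hence $s_1\bolda \not\leq \tilde\xi$.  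Since here $\bolda^+ = s_1\bolda$, the very implication $\bolda \leq \tilde\xi \Rightarrow \bolda^+ \leq \tilde\xi$ already fails for root-dominance, so the inductive strategy cannot work as written.

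The paper's proof proceeds by an entirely different mechanism: it writes $\tilde\xi = z\boldd$ and $\bolda = w\boldd$ with $w,z$ shortest in $W/\bbS_\boldd$, passes from the hypothesis to a Bruhat-order comparison between $w$ and $z$, decomposes $w = xw'$ with $x \in W_\frakl$ and $w'$ shortest in $W_\frakl\backslash W$, and then invokes \cite[Proposition~2.5.1]{MR2133266} on the compatibility of Bruhat order with parabolic coset decomposition to deduce the required inequality for $w'$.  There is no reflection-by-reflection induction on weights; the argument lives in the Coxeter combinatorics of $W$ rather than in the root lattice, and it is precisely this passage to the Bruhat order that sidesteps the failure your counterexample exhibits.
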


\begin{proof}
  Since both are Serre subcategories, it is enough to prove that they have the same simple modules. Since the inclusion \(\catO(\frakg)_{\preleq \xi} \subseteq \catO(\frakg)_\boldd \cap \catO(\frakg)_{\leq \tilde\xi}\) is clear, it remains to prove the converse.   Write \(\tilde \xi = z \boldd\) with \(z \in W\) and pick some \(L(w \boldd) \in \catO(\frakg)_{\leq \tilde\xi}\) for \( w \in W\). Suppose that \(w\) and \(z\) are shortest right coset representatives for \(\rightquotient{W}{\bbS_\boldd}\), where \(\bbS_\boldd \) is the stabilizer of \(\boldd\). Notice that since \(z \boldd \in \weightsgl^+(\frakl)\), the element \(z\) is also a shortest left coset representative for \(\leftquotient{W_\frakl}{ W}\). Then \(L(w \boldd) \in \catO(\frakg)_{\leq \tilde\xi}\) implies that \(w \leq z\) in the Bruhat order. To prove that \(L(w \boldd) \in \catO(\frakg)_{\preleq \xi}\) it is enough to show that \(p(w \boldd) \preleq \xi\). Write \(w=xw'\) with \(x \in W_\frakl\) and \(w'\)  a shortest left coset representative for \(\leftquotient{W_\frakl}{ W}\).  Then it is enough to show that \(w' \leq z\) in the Bruhat order. This follows from \cite[Proposition~2.5.1]{MR2133266}.
\end{proof}

Consequently, the functor \(\Delta\)  has image in \(\catO(\frakg)_{\preleq \xi}\) when restricted to \(\catO(\frakl)_\xi\). Let  \(\Delta_{\xi} \colon \catO(\frakl)_{\xi} \mapto \catO(\frakg)_{\preleq \xi}\) be the resulting functor.
Define the functor \(\pi_\xi \colon \catO(\frakg)_{\preleq \xi} \mapto \catO(\frakl)_\xi\) 
\begin{equation}
\label{eq:89}
  \pi_\xi \colon \catO(\frakg)_{\preleq \xi} \into \catO(\frakg) \xrightarrow{\,\calF\,}  \tilde \catO(\frakl) \surto \catO(\frakl)_\xi
\end{equation}
as the composition of inclusion \(\catO(\frakg)_{\preleq \xi} \into \catO(\frakg)\), the functor \(\calF= \Hom_{\frakp_\frakl}(U(\frakl), \res^{\frakg}_{\frakp_\frakl} \blank)\) and of the projection \(\pr_\xi\colon \tilde\catO(\frakl) \mapto \catO(\frakl)_\xi\). Here \(\tilde\catO(\frakl)\) is the full subcategory of \(\frakl\)--modules which satisfy \ref{item:31} and \ref{item:32} and such that all weight spaces are finite dimensional. It is the direct product of the blocks of \(\catO(\frakl)\).

\begin{remark}
  \label{rem:1}
  Note that an element of \(\calF(M)\) is uniquely determined by the image of \(1 \in U(\frakl)\) in \(\res^{\frakg}_{\frakp_\frakl} M\), which can be any vector of
  \begin{equation}
M^{\fraku_\frakl} = \{v  \in  M \suchthat \fraku_\frakl v=0\}.\label{eq:90}
\end{equation}
Since \([\frakl,\fraku_\frakl] \subseteq \fraku_\frakl\), the vector space \(M^{\fraku_\frakl}\) is an \(\frakl\)--subrepresentation of \(M\).  In fact, \(M^{\fraku_\frakl} \cong \calF(M)\) as \(\frakl\)--representations, naturally in \(M\).
Since \(M^{\fraku_\frakl}\) is an \(\frakl\)--submodule of \(M\), it is an \(\frakh\)--weight module and locally \(\frakn_\frakl^+\)--finite. Moreover, its weight spaces are finite dimensional. It follows that \(M^{\fraku_\frakl} \in \tilde\catO(\frakl)\). It is however not clear that \(M^{\fraku_\frakl}\) is finitely generated as a \(U(\frakl)\)--module: this is why we introduced the category \(\tilde\catO(\frakl)\).
\end{remark}

\begin{lemma}
  \label{lem:18}
 The functor \(\pi_\xi\colon \catO(\frakg)_{\preleq \xi} \mapto \catO(\frakl)_\xi\)
  is right adjoint to \(\Delta_\xi\).
\end{lemma}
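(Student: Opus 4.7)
The plan is to decompose $\pi_\xi$ into its three constituent functors—the inclusion of $\catO(\frakg)_{\preleq \xi}$ into $\catO(\frakg)$, the functor $\calF$, and the block projection $\pr_\xi$—and match them against three elementary adjunctions that together factor the desired one. Fix $N \in \catO(\frakl)_\xi$ and $M \in \catO(\frakg)_{\preleq \xi}$; the goal is a natural isomorphism $\Hom_\frakg(\Delta_\xi N, M) \cong \Hom_{\catO(\frakl)_\xi}(N, \pi_\xi M)$.

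First, I would apply the standard tensor--hom (Frobenius reciprocity) adjunction for the parabolic induction defining $\Delta$:
\[
\Hom_\frakg(U(\frakg) \otimes_{\frakp_\frakl} N', M) \cong \Hom_{\frakp_\frakl}(N', \res^{\frakg}_{\frakp_\frakl} M).
\]
Second, since by construction $\fraku_\frakl$ acts by zero on $N' = N \otimes \mathbb{C}_\gamma$, the image of any $\frakp_\frakl$-linear map into $M$ must lie inside $M^{\fraku_\frakl}$. Combined with the natural identification $\calF(M) \cong M^{\fraku_\frakl}$ of \cref{rem:1}, and unwinding the $\mathbb{C}_\gamma$-twist as a compensating shift of $\frakh$-weights, this yields
\[
\Hom_{\frakp_\frakl}(N', \res^{\frakg}_{\frakp_\frakl} M) \cong \Hom_\frakl(N, \calF(M)).
\]
Third, $N$ is concentrated in the single block $\catO(\frakl)_\xi$, whereas $\calF(M)$ a priori lives in the full product category $\tilde\catO(\frakl)$; hence every $\frakl$-linear morphism from $N$ to $\calF(M)$ factors uniquely through $\pr_\xi\, \calF(M) = \pi_\xi(M)$, giving $\Hom_\frakl(N, \calF(M)) \cong \Hom_{\catO(\frakl)_\xi}(N, \pi_\xi M)$.

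Composing these three natural isomorphisms produces the adjunction. The main point needing care is the block bookkeeping in the second step: one must verify that the $\gamma$-shift of $\frakh$-weights lines up the relevant blocks, so that $N \in \catO(\frakl)_\xi$ is paired exactly with $\pr_\xi \calF(M)$ and not with some other summand of $\calF(M)$. This is precisely the reason for introducing $\gamma = \rho(\frakg) - \rho(\frakl)$ in the definition of $\Delta$, and it is consistent with the containment $\Delta_\xi(\catO(\frakl)_\xi) \subseteq \catO(\frakg)_{\preleq \xi}$ noted just after \cref{lem:16}. The remaining two steps are entirely formal.
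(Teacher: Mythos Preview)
Your proof is correct and follows essentially the same approach as the paper: both decompose $\Delta_\xi$ as trivial extension followed by induction, and $\pi_\xi$ (via $\calF$) as restriction followed by taking $\fraku_\frakl$-invariants (equivalently, coinduction $\Hom_{\frakp_\frakl}(U(\frakl),\blank)$), and then observe that these are respectively adjoint pairs. The paper phrases this more tersely as a composition of two standard adjunctions, while you unpack the same content as a chain of explicit natural Hom-isomorphisms and are more careful about the $\gamma$-twist and block compatibility that the paper leaves implicit.
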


\begin{proof}
The functor \(\Delta\) is the composition of first extending the \(\frakl\)-action trivially to \(\frakp\) and then apply the induction functor \(U(\frakg)
  \otimes_{U(\frakp_\frakl)} \blank\). On the other hand,  \(\calF\) is the composition of the restriction functor
  \(\res^{\gl_\enne}_{\frakp_\frakl}\) and of the
  co-induction functor \(\Hom_{\frakp_\frakl}(U(\frakl),
  \blank)\), which are the respective adjoint functors (cf.\
  \cite{MR672956}). Hence \((\Delta_\xi, \pi_\xi)\) is an adjoint pair of functors.
\end{proof}

\begin{lemma}
  \label{lem:17}
  We have \(\pi_\xi \circ \Delta_\xi \cong \id\).
\end{lemma}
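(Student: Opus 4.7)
The plan is to show that the unit $\eta\colon \id \Rightarrow \pi_\xi \circ \Delta_\xi$ of the adjunction established in Lemma~\ref{lem:18} is a natural isomorphism. Using Remark~\ref{rem:1} to identify $\calF(N) \cong N^{\fraku_\frakl}$ naturally in $N$, we have $\pi_\xi \Delta_\xi M = \pr_\xi\bigl((\Delta M)^{\fraku_\frakl}\bigr)$; tracing through the adjunction, $\eta_M$ is the map induced by the universal $\frakp_\frakl$-linear embedding $M' \hookrightarrow \Delta M$, $m' \mapsto 1 \otimes m'$, whose image clearly sits inside $(\Delta M)^{\fraku_\frakl}$ since $\fraku_\frakl$ annihilates $M'$.

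Injectivity of $\eta_M$ follows directly from the PBW theorem: the decomposition $\frakg = \fraku^-_\frakl \oplus \frakp_\frakl$ gives $U(\frakg) \cong U(\fraku^-_\frakl) \otimes_\C U(\frakp_\frakl)$ as right $U(\frakp_\frakl)$-modules, so $\Delta M \cong U(\fraku^-_\frakl) \otimes_\C M'$ as an $\frakh$-graded vector space, and $1 \otimes M'$ is a vector-space direct summand.

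The real content is surjectivity, which I would handle through a weight/block analysis. From the PBW decomposition above, every weight of $\Delta M$ is of the form $\mu - \nu$ with $\mu$ a weight of $M'$ and $\nu$ ranging over the cone $Q^+_\fraku$ spanned by the positive roots of $\fraku_\frakl$. The key combinatorial input is the elementary observation
\[
  Q^+_\fraku \cap Q_\frakl = \{0\},
\]
where $Q_\frakl$ is the root lattice of $\frakl$: any nonzero element of $Q^+_\fraku$, expanded in the simple roots of $\frakg$, necessarily carries a strictly positive coefficient on some simple root of $\frakg$ not lying in $\frakl$, and this coefficient cannot be cancelled inside $Q_\frakl$. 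Since the weights appearing in any module of $\catO(\frakl)_\xi$ all lie in a single $Q_\frakl$-coset of $\frakh^*$, this forces every weight of $\Delta M$ that survives the block projection $\pr_\xi$ to arise with $\nu = 0$, that is, to be a weight of $1 \otimes M'$. A comparison of weight spaces then yields $\pr_\xi\bigl((\Delta M)^{\fraku_\frakl}\bigr) = 1 \otimes M'$, naturally isomorphic to $M$ as an object of $\catO(\frakl)_\xi$.

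The main obstacle is that the naive identity $(\Delta M)^{\fraku_\frakl} = 1 \otimes M'$ is genuinely \emph{false} in general: already for $\frakg = \fraksl_2$ with $\frakl = \frakh$, a $\frakg$-Verma module can carry extra $\fraku_\frakl$-invariants at strictly lower weights coming from its singular vectors. The role of the block projection $\pr_\xi$ is therefore essential, and the root-lattice intersection lemma above is precisely what ensures that the projection removes these spurious invariants.
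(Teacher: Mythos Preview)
Your proof is correct and follows essentially the same strategy as the paper: use the PBW decomposition \(\Delta M \cong U(\fraku_\frakl^-) \otimes_\C M'\) to see that the part with nontrivial \(\fraku_\frakl^-\)-monomials lies in different \(\catO(\frakl)\)-blocks and is therefore killed by \(\pr_\xi\), leaving exactly \(1 \otimes M' \cong M\). The only cosmetic difference is that the paper detects the block separation via the action of the center \(\frakz_\frakl\) of \(\frakl\) (which acts by a strictly smaller weight on \(x_\gamma \otimes m_\beta\) when \(x_\gamma \neq 1\)), whereas you phrase the same fact as the root-lattice identity \(Q^+_\fraku \cap Q_\frakl = \{0\}\); these are dual formulations of the same observation, since restricting a weight to \(\frakz_\frakl\) is precisely reading off its \(Q_\frakl\)-coset.
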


\begin{proof}
  First, note that if \(N \in \catO(\frakg)_{\preleq \xi}\) then \(\pi_\xi (N)\) can be identified, by \Cref{rem:1}, with an \(\frakl\)--submodule of \(N\). Now let \(M \in \catO(\frakl)_\xi\). By the PBW Theorem, if \(\{m_\beta\}\) is a basis of \(M\) and \(\{x_\gamma\}\) is a  monomial basis of \(\fraku_\frakl^-\),  then a basis of \(U(\frakg) \otimes_{\frakp_\frakl} M\) is given by \(\{x_\gamma \otimes m_\beta\}\). Now, the center \(\frakz_\frakl\) of \(\frakl\) acts on \(\catO(\frakl)_\xi\) according to the (shifted) weight \(\tilde \xi \) restricted to \(\frakz_\frakl\). But unless \(x_\gamma=1\), the action of \(\frakz_\frakl\) on \(x_\gamma \otimes m_\beta\) is given by a (shifted) weight which is strictly smaller than \(\tilde \xi\). Hence \(\pi_\xi (\Delta_\xi (M))\) is an \(\frakl\)--submodule of \(1 \otimes M\).  On the other side, all vectors \(1 \otimes m\) for \(m \in M\) are \(\fraku_\frakl\)--invariant. Hence \(\pi_\xi(\Delta_\xi(M)) = \{1 \otimes m \in U(\frakg) \otimes_{\frakp_\frakl}  M\} \cong M\).
\end{proof}

\begin{lemma}
  \label{lem:19}
  The functor \(\pi_\xi\) is exact.
\end{lemma}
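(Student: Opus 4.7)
Since $\pi_\xi$ is right adjoint to the exact functor $\Delta_\xi$ (by \Cref{lem:18}), it is automatically left exact, so only right exactness requires argument. My plan is to use the description from \Cref{rem:1}: on $\catO(\frakg)_{\preleq\xi}$, we can write $\pi_\xi = \pr_\xi \circ (-)^{\fraku_\frakl}$, with $\pr_\xi$ the exact block projection onto $\catO(\frakl)_\xi$. Applying $\pr_\xi$ to the long exact sequence of right-derived functors of $(-)^{\fraku_\frakl}$ attached to a short exact sequence $0 \to A \to B \to C \to 0$ in $\catO(\frakg)_{\preleq\xi}$ reduces right exactness to proving $\pr_\xi H^1(\fraku_\frakl, A) = 0$.

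Via the Chevalley--Eilenberg complex, $H^1(\fraku_\frakl, A)$ is a subquotient of $\Hom(\fraku_\frakl, A) \cong \fraku_\frakl^* \otimes A$. So by exactness of $\pr_\xi$ it suffices to verify that no $\frakh$-weight of $\fraku_\frakl^* \otimes A$ is supported by any object of the block $\xi$ of $\catO(\frakl)$. Such weights have the form $\mu - \alpha$ with $\mu$ a weight of $A$ and $\alpha$ a positive root of $\fraku_\frakl$; by \Cref{lem:16}, $\tilde\xi - \mu$ is a non-negative integer combination of simple roots of $\frakg$. Weights appearing in the block $\xi$ of $\catO(\frakl)$, on the other hand, all lie in $\tilde\xi$ minus a non-negative integer combination of simple roots of $\frakl$, since the simple objects of this block are the $L_\frakl(w\tilde\xi)$ for $w \in W_\frakl$ and $\tilde\xi$ is $\frakl$-dominant (so $w\tilde\xi$ itself and every lower weight of $L_\frakl(w\tilde\xi)$ has this shape).

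The vanishing then follows from a weight-combinatorial comparison. Assuming $\mu - \alpha = \tilde\xi - \gamma$ with $\gamma$ a non-negative combination of simple roots of $\frakl$, rewrite as $\alpha + (\tilde\xi - \mu) = \gamma$. In the basis of simple roots of $\frakg$, the left-hand side has only non-negative coefficients and a strictly positive coefficient on at least one simple root not belonging to $\frakl$ (supplied by $\alpha$, a root of $\fraku_\frakl$), whereas the right-hand side vanishes on every simple root not belonging to $\frakl$---a contradiction. This gives $\pr_\xi(\fraku_\frakl^* \otimes A) = 0$, hence $\pr_\xi H^1(\fraku_\frakl, A) = 0$, and the lemma follows. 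The main subtlety is precisely this weight-support comparison, which relies essentially on the tight upper bound on weights of $A$ provided by \Cref{lem:16}, together with the explicit description of weights appearing in $\catO(\frakl)_\xi$.
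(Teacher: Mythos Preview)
Your argument is correct. The core weight-combinatorial step---that adding a root $\alpha$ of $\fraku_\frakl$ to a weight $\mu\leq\tilde\xi$ produces a weight that cannot lie in $\tilde\xi$ minus the root lattice of $\frakl$---is exactly what drives the paper's proof as well. The difference is in packaging. The paper argues directly: given a surjection $f\colon M\twoheadrightarrow N$ and a weight vector $v\in\pr_\xi(N^{\fraku_\frakl})$, pick any preimage $v'\in M$ of the same weight; if some $u\in\fraku_\frakl$ had $uv'\neq 0$ then $uv'$ would have weight $>\tilde\xi$ along a simple root outside $\frakl$, contradicting $M\in\catO(\frakg)_{\preleq\xi}$; hence $v'\in M^{\fraku_\frakl}$ and surjectivity follows. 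Your route instead passes through the long exact sequence of derived functors and reduces to $\pr_\xi H^1(\fraku_\frakl,A)=0$, which you then deduce from the Chevalley--Eilenberg complex by the same weight bound.

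What each approach buys: the paper's argument is shorter and entirely elementary, avoiding any cohomological machinery. Your approach is more systematic and makes the obstruction to right exactness explicit as a cohomology group; it would generalise cleanly to situations where one wants higher vanishing or where the direct lifting argument is less transparent. Both rely essentially on \Cref{lem:16} to control the weights of objects in $\catO(\frakg)_{\preleq\xi}$, and on the $\frakl$-dominance of $\tilde\xi$ to control the weights occurring in the block $\catO(\frakl)_\xi$.
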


\begin{proof}
  Since \(\pi_\xi\) is a right-adjoint functor, it is left exact. So we only need to prove that \(\pi_\xi\) sends epimorphisms to epimorphisms. Let \(f \colon M \surto N\) be an epimorphism in \(\catO(\frakg)_{\preleq \xi}\), and consider the restriction \(\calF(f)=f_{| M^{\fraku_\frakl}}\colon M^{\fraku_\frakl} \mapto N^{\fraku_\frakl}\). Let \(v \in \pr_\xi (N^{\fraku_\frakl})\) and choose a preimage \(v' \in M\) such that \(f(v')=v\). Suppose, without loss of generality, that all our vectors are weight vectors.
Suppose that there exists a \(u \in \fraku_\frakl\) such that \(u v' \neq 0\). Then the weight of \(u v'\) is not a weight of any object of \(\catO(\frakg)_{\preleq \xi}\), and this cannot happen. Hence \(\fraku_\frakl v'=0\) and \(\pi_\xi(f)\) is surjective.
\end{proof}

\begin{prop}
  \label{prop:21}
  The pair \((\catO(\frakl)_\xi, \pi_\xi)\) is the quotient category \(\catO(\frakg)_{\preleq \xi}/\catO(\frakg)_{\prel \xi}\).
\end{prop}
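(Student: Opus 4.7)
My plan is to invoke the general categorical principle that an exact functor \(F \colon \calA \to \calB\) between abelian categories identifies \(\calB\) with the Serre quotient \(\calA / \ker F\) as soon as \(F\) admits an exact, fully faithful left adjoint. By \Cref{lem:13,lem:18,lem:17,lem:19}, the functor \(\pi_\xi\) is exact, its left adjoint \(\Delta_\xi\) is exact, and the unit \(\id \to \pi_\xi \Delta_\xi\) is an isomorphism, so \(\Delta_\xi\) is fully faithful. It therefore suffices to identify the Serre kernel \(\ker \pi_\xi\) inside \(\catO(\frakg)_{\preleq \xi}\) with \(\catO(\frakg)_{\prel \xi}\).

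Both sides being Serre subcategories, this comparison reduces to a check on simples \(L(\bolda)\) with \(p(\bolda) \preleq \xi\). For the easy direction, if \(p(\bolda) = \xi\), then parabolic induction sends the \(\frakl\)-Verma to the \(\frakg\)-Verma, \(\Delta_\xi M_\frakl(\bolda) \cong M(\bolda)\), and the adjunction gives
\[
\Hom_\frakl\bigl(M_\frakl(\bolda), \pi_\xi L(\bolda)\bigr) \cong \Hom_\frakg\bigl(M(\bolda), L(\bolda)\bigr) \cong \C,
\]
so \(\pi_\xi L(\bolda) \neq 0\). For the other direction, assume \(p(\bolda) \prel \xi\), so that \(\bolda \in W_\frakl \tilde\eta\) for some \(\tilde\eta \neq \tilde\xi\) with \(\tilde\eta <_\frakg \tilde\xi\). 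By \Cref{rem:1}, \(\pi_\xi L(\bolda)\) is the projection of \(L(\bolda)^{\fraku_\frakl}\) onto the \(\xi\)-block of \(\catO(\frakl)\); this is nonzero only if some weight \(\nu\) of \(L(\bolda)\) lies in \(W_\frakl \tilde\xi\). Analyzing the \(\frakz_\frakl\)-central character of such a hypothetical \(\nu\), and using that the restrictions of the simple roots in \(\Pi_\fraku\) to \(\frakz_\frakl\) are linearly independent while those in \(\Pi_\frakl\) vanish, one shows that \(\tilde\xi - \tilde\eta\) would have to be a non-negative \(\Z\)-combination of \(\Pi_\frakl\); this contradicts the combinatorial fact, proved block-by-block via partition dominance, that two distinct \(\frakl\)-dominant weights in a single \(W\)-orbit are \(\leq_\frakl\)-incomparable. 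Hence \(\pi_\xi L(\bolda) = 0\).

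The principal obstacle is the weight-theoretic verification in the second case, which rests on this incomparability claim about \(\frakl\)-dominant representatives inside a \(W\)-orbit; everything else is a formal consequence of the \((\Delta_\xi, \pi_\xi)\)-adjunction and the exactness statements already established in \Cref{lem:13,lem:19}.
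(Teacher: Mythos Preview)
Your overall strategy is sound and genuinely different from the paper's. You invoke the abstract principle that an exact functor with a fully faithful left adjoint realises the target as the Serre quotient by its kernel, and then separately identify \(\ker\pi_\xi\) with \(\catO(\frakg)_{\prel\xi}\). The paper instead verifies the universal property directly: it analyses the counit \(\psi\colon \Delta_\xi\pi_\xi \Rightarrow \id\) via the PBW theorem and shows concretely that \(\ker\psi_M,\coker\psi_M\in\catO(\frakg)_{\prel\xi}\) for every \(M\). The paper's route is more hands-on, and its explicit control of \(\psi\) is reused verbatim in the later proof of \Cref{prop:6}; your abstract formulation is cleaner but does not provide that reusable information.

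Where your argument becomes unnecessarily heavy is the inclusion \(\catO(\frakg)_{\prel\xi}\subseteq\ker\pi_\xi\). The ``incomparability'' claim you invoke --- that two distinct \(\frakl\)-dominant weights in a single \(W\)-orbit are \(\leq_\frakl\)-incomparable --- is true, but it is not trivial and your hint ``block-by-block via partition dominance'' does not constitute a proof; one actually needs a majorization/Karamata-type argument (e.g.\ comparing the convex functionals \(t\mapsto\sum_i\max(x_i-t,0)\) block by block). More importantly, this whole detour is avoidable. A much shorter argument: for \(L(\bolda)\) simple with \(p(\bolda)\neq\xi\), any \(\frakn^+_\frakl\)-singular vector in \(L(\bolda)^{\fraku_\frakl}\) is in fact \(\frakn^+\)-singular in \(L(\bolda)\), hence a scalar multiple of the highest weight vector \(v_\bolda\); therefore every nonzero direct summand of \(L(\bolda)^{\fraku_\frakl}\) in \(\tilde\catO(\frakl)\) contains \(v_\bolda\) and lies in the block \(\catO(\frakl)_{p(\bolda)}\), so \(\pr_\xi L(\bolda)^{\fraku_\frakl}=0\). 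This is presumably what the paper has in mind when it asserts the vanishing ``by definition''.
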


\begin{proof}
  We check that \((\catO(\frakl)_\xi, \pi_\xi )\) satisfies the universal property of the Serre quotient category. By \Cref{lem:19}, \(\pi_\xi\) is exact and, by definition, \(\pi_\xi\)  vanishes on \(\catO(\frakg)_{\prel \xi}\) (in particular, the last functor in the composition \cref{eq:89} defining \(\pi_\xi\) kills \(\catO(\frakg)_{\prel \xi}\)). Let now \(\calA\) be any abelian category and \(\calG\colon \catO(\frakg)_{\preleq \xi} \mapto \calA\) an exact functor which vanishes on \(\catO(\frakg)_{\prel \xi}\). Define \(\bar \calG\colon \catO(\frakl)_\xi \mapto \calA\) to be \(\bar \calG = \calG \circ \Delta_\xi\). We shall prove that \(\bar \calG \circ \pi_\xi= \calG\).

Consider the adjunction morphism \(\psi\colon \Delta_\xi \circ \pi_\xi \mapto \id\). For \(M \in \catO(\frakg)_{\preleq \xi}\) the map \(\psi_M \colon U(\gl_\enne) \otimes_{\frakp_\frakl} \pr_\xi(M^{\fraku_\frakl}) \mapto M\) is simply \(\psi_M(x \otimes v)= xv\). Let \(w = \sum_i x_i \otimes v_i\) be some weight vector in \(\ker(\psi_M)\), i.e. \(\psi_M(w)=0\). Using the PBW Theorem, write each \(x_i\) as \(x_i=x_i'x_i''\) with \(x_i' \in U(\fraku^-_\frakl)\) and \(x_i'' \in U(\frakp_\frakl)\). Then \(x_i \otimes v_i = x_i' \otimes x_i'' v_i\). Since \(w\) is a weight vector, we can suppose that all \(x_i\)'s are either \(1\) or in \(\fraku^-_\frakl U(\fraku^-_\frakl)\). In the first case we have \(w=0\), while in the second case the weight of \(w\) is not \(W_\frakl\)--linked to \(\tilde\xi\). This shows that all weights of \(\ker(\psi_M)\) are strictly smaller than \(\tilde\xi\) and not \(W_\frakl\)--linked to \(\tilde\xi\), and hence \(\ker (\psi_M)\) is an object of \(\catO(\frakg)_{\prel \xi}\). \\
Let us now consider the cokernel of \(\psi_M\). 
Let \(\{v_\alpha\suchthat \alpha \in A\}\) be generators of \(M\) as an \(\frakl\)--module.
Of course they generate \(M\) as a \(\frakg\)--module, and their images \(\{\bar v_\alpha\}\) in the quotient generate \(\coker \psi_M\). Suppose that \(v_\alpha\) has weight \(W_\frakl\)--linked to \(\tilde\xi\). Then \(\fraku_\frakl v_\alpha=0\)  by weight considerations. Hence \(v_\alpha = \psi(1 \otimes v_\alpha)\) and \(\bar v_\alpha=0\). Therefore \(\coker \psi_M\) is an object of \(\catO(\frakg)_{\prel \xi}\), too.\\
It follows that for each \(M \in \catO(\frakg)_{\preleq \xi}\) we have an exact sequence
\begin{equation}
  \label{eq:106}
  0 \mapto \ker \psi_M \mapto \Delta_\xi \circ \pi_\xi (M) \mapto M \mapto \coker \psi_M \mapto 0
\end{equation}
with both \(\ker \psi_M\) and \(\coker \psi_M\) in \(\catO(\frakg)_{\prel \xi}\). 
 Since \(\calG\) is exact and vanishes on \(\catO(\frakg)_{\prel \xi}\), applying \(\calG\) to \cref{eq:106} implies \(\bar \calG\circ \pi_\xi \cong \calG\) as claimed. (Note that this proves at once that \(\bar \calG\) is uniquely determined up to isomorphism.)
\end{proof}

Let \(\frakq \subseteq \frakl\) be a standard parabolic subalgebra and let \(\hat \frakq = \frakq + \frakb \subseteq \frakg\). Then the same results we just proved hold for the parabolic categories \(\catO(\frakg)^{\hat \frakq}\) and \(\catO(\frakl)^\frakq\). In particular, we have a pair \((\Delta_\xi, \pi_\xi)\) of adjoint functors
\begin{equation}
  \label{eq:91}
  \begin{tikzpicture}[anchorzero]
    \node (A) at (0,0) {$\catO(\frakg)^{\hat \frakq}_{\preleq \xi}$};
    \node (B) at (3,0) {$\catO(\frakl)^\frakq_\xi$.};
    \draw[->,transform canvas={yshift=1mm}] (A) to node[above] {$\pi_\xi$} (B);
    \draw[->,transform canvas={yshift=-1mm}] (B) to node[below] {$\Delta_\xi$} (A);
  \end{tikzpicture}
\end{equation}

\subsection{Generalized parabolic subcategories}
\label{sec:gener-parab-subc}

Let now \(\boldlambda=(\lambda^{(1)},\dotsc,\lambda^{(m)}) \in \Part^m(\enne)\) be a multipartition of \(\enne\) with \(m\) parts. Let \(\enne_i = \abs{\lambda^{(i)}}\) for \(i=1,\dotsc,m\), and let as before \(\frakl= \gl_{\enne_1} \oplus \dotsb \oplus \gl_{\enne_m}\).
The outer tensor product \(\calC^\boldlambda = \calQ^{\lambda^{(1)}} \boxtimes \dotsb \boxtimes \calQ^{\lambda^{(m)}}\) (see \cref{sec:outer-tensor-product}) can be considered as a full subcategory of \(\catO(\frakl)\). 

\begin{lemma}
  \label{lem:2}
  The category \(\calC^\boldlambda\) 
 satisfies the following properties:
  \begin{enumerate}[(i)]
  \item \label{item:1} it is stable under tensor product with finite-dimensional \(\frakl\)--modules;
  \item \label{item:2} it decomposes into a direct sum of full subcategories, each equivalent to a module category over a finite-dimensional self-injective associative algebra;
  \item \label{item:3} the action of the center of \(\frakl\) on any object \(M \in \calC^\boldlambda\) is diagonalizable.
  \end{enumerate}
\end{lemma}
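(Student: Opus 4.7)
The plan is to establish each of the three properties for a single factor \(\calQ^{\lambda^{(j)}} \subseteq \catO(\gl_{\enne_j})\) and then transfer them to the outer tensor product \(\calC^\boldlambda\) via the identification \cref{eq:110}. Since every block of \(\calC^\boldlambda\) is the outer tensor product of blocks of the individual \(\calQ^{\lambda^{(j)}}\), and every finite-dimensional \(\frakl\)-module decomposes as a direct sum of outer tensor products of finite-dimensional \(\gl_{\enne_j}\)-modules, each property is ``multiplicative'' under \(\boxtimes\).

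For \eqref{item:1}, I will use that tensoring with a finite-dimensional \(\gl_{\enne_j}\)-module \(V\) is exact and biadjoint to tensoring with \(V^*\). From the biadjunction it follows that \(-\otimes V\) preserves both projective and injective objects of the parabolic category \(\catO^{\lambda^{(j)}}\), and therefore preserves prinjectives. Applying \(-\otimes V\) to a presentation of \(M \in \calQ^{\lambda^{(j)}}\) by prinjectives as in \cref{eq:21} yields a presentation of \(M \otimes V\) by prinjectives, so \(M \otimes V \in \calQ^{\lambda^{(j)}}\). Combined with the decomposition of finite-dimensional \(\frakl\)-modules noted above, this gives \eqref{item:1} for \(\calC^\boldlambda\).

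For \eqref{item:2}, the block \(\calQ^{\lambda^{(j)}}_{\boldd_j}\) is by definition equivalent to \(\rmod{A_{\boldd_j}}\), where \(A_{\boldd_j}=\End_\frakg(Q_{\boldd_j})^{op}\) and \(Q_{\boldd_j}\) is the direct sum of the indecomposable prinjective modules of \(\catO^{\lambda^{(j)}}_{\boldd_j}\) (the objects appearing in \Cref{lem:3}). The core point is to show that \(A_{\boldd_j}\) is self-injective: under the equivalence \(\Hom_\frakg(Q_{\boldd_j},\blank)\) the right regular \(A_{\boldd_j}\)-module corresponds to \(Q_{\boldd_j}\), and since \(Q_{\boldd_j}\) is injective in \(\catO^{\lambda^{(j)}}_{\boldd_j}\), the regular module is injective over \(A_{\boldd_j}\), which is the definition of self-injectivity. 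Then, using \cref{eq:110}, each block of \(\calC^\boldlambda\) is equivalent to \(\rmod{A_{\boldd_1}\otimes\dotsb\otimes A_{\boldd_m}}\), and a standard argument (indecomposable injectives over a tensor product of finite-dimensional algebras are outer tensor products of indecomposable injectives of the factors) shows that the tensor product of self-injective algebras is self-injective.

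Finally, for \eqref{item:3}, the Lie-algebra center splits as \(Z(\frakl)=\bigoplus_{j}\C\cdot\mathrm{Id}_{\enne_j}\). On each block \(\catO(\gl_{\enne_j})_{\boldd_j}\) the element \(\mathrm{Id}_{\enne_j}\) acts through the trace of the shifted highest weight, which is Weyl-invariant and hence constant on the block. The same scalar action is therefore inherited by \(\calQ^{\lambda^{(j)}}_{\boldd_j}\) and by each block \(\calQ^{\lambda^{(1)}}_{\boldd_1}\boxtimes\dotsb\boxtimes\calQ^{\lambda^{(m)}}_{\boldd_m}\) of \(\calC^\boldlambda\). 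Any object of \(\calC^\boldlambda\) decomposes as a direct sum of its block components, and on each of these components \(Z(\frakl)\) acts through a single character, giving diagonalizability.

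The step I expect to require the most care is the self-injectivity in \eqref{item:2}: everything else is essentially a bookkeeping exercise with the outer tensor product, whereas here one must genuinely use that \(Q_{\boldd_j}\) is simultaneously projective and injective to identify the regular module of \(A_{\boldd_j}\) with an injective object.
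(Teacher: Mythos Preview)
Your strategy is exactly the paper's: verify each property for a single factor \(\calQ^{\lambda^{(j)}}\) and then pass to the outer tensor product. Parts \ref{item:1} and \ref{item:3} are argued correctly.

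For \ref{item:2} there is a genuine gap. You conclude that \(A_{\boldd_j}=\End(Q_{\boldd_j})\) is self-injective because \(Q_{\boldd_j}\) is injective in \(\catO^{\lambda^{(j)}}_{\boldd_j}\); but the equivalence \(\Hom(Q_{\boldd_j},\blank)\) identifies \(\rmod{A_{\boldd_j}}\) with the Serre \emph{quotient} \(\calQ^{\lambda^{(j)}}_{\boldd_j}\), whose abelian structure is not induced from \(\catO^{\lambda^{(j)}}_{\boldd_j}\). A monomorphism in \(\calQ^{\lambda^{(j)}}_{\boldd_j}\) need not be one in \(\catO^{\lambda^{(j)}}_{\boldd_j}\), so injectivity of \(Q_{\boldd_j}\) in the larger category does not immediately give injectivity in the quotient. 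Concretely, to make your argument go through you must show that \(\Hom_{\catO^{\lambda^{(j)}}}(K,Q_{\boldd_j})=0\) whenever \(K\) lies in the Serre subcategory being killed, i.e.\ that the socle of each prinjective \(P^{\lambda^{(j)}}(\bolda)\) is again a simple with semi-standard label. This is true, but it uses the simple-preserving duality \(\mathrm{d}\) on \(\catO^{\lambda^{(j)}}\): since \(\mathrm{d}\) permutes the prinjectives and sends \(P(\bolda)\) to \(I(\bolda)\), one gets \(Q_{\boldd_j}\cong\bigoplus_{\bolda}I(\bolda)\) with the sum over the same semi-standard set, and the claim follows. Equivalently, \(\mathrm{d}\) descends to \(\calQ^{\lambda^{(j)}}_{\boldd_j}\) and there interchanges projectives with injectives while fixing \(Q_{\boldd_j}\). (This is essentially the content of \cite{MR2369489}, which shows the endomorphism ring is even symmetric.) The paper's own proof is a single sentence and does not address this point either, so you are in good company; but since you flag \ref{item:2} as the delicate step, this is precisely the missing ingredient.
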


\begin{proof}
  The properties hold for \(\calC^\boldlambda\) since they hold for each tensor factor.
\end{proof}

Using the terminology of \cite{MR2450613}, the lemma implies that the category \(\calC^\boldlambda\) is admissible. 
Following  \cite{MR1921761}, we can define generalized parabolic subcategories:

\begin{definition}
  \label{def:10}
  We let \(\calQ^\boldlambda\) be the full subcategory of all \(\gl_\enne\)--modules which are
  \begin{enumerate}[(Q1)]
  \item \label{item:6} finitely generated as \(U(\gl_\enne)\)--modules,
  \item \label{item:7} as \(\frakl\)--modules, a direct sum of objects of \(\calC^\boldlambda\),
  \item \label{item:8} locally \(\fraku_\frakl\)--finite.
  \end{enumerate}
\end{definition}

The three conditions imply immediately that \(\calQ^\boldlambda\) is a full subcategory of \(\catO(\gl_\enne)\). Hence it inherits from the block decomposition of \(\catO(\gl_\enne)\) a decomposition, which we still call a \emph{block decomposition}.

\begin{lemma}
  \label{lem:20}
  The standardization functor  restricted to \(\calC^\boldlambda\) has values in \(\calQ^\boldlambda\).
\end{lemma}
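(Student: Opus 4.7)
The plan is to check the three defining conditions of $\calQ^\boldlambda$---finite generation (Q1) as a $U(\gl_\enne)$-module, a suitable $\frakl$-module structure (Q2), and local $\fraku_\frakl$-finiteness (Q3)---for $\Delta(M)$ when $M \in \calC^\boldlambda$. Conditions (Q1) and (Q3) will come essentially for free: a set of $U(\frakl)$-generators of $M'$ gives, via $m \mapsto 1 \otimes m$, a set of $U(\frakg)$-generators of $\Delta(M)$, handling (Q1); and \Cref{lem:13} places $\Delta(M)$ in $\catO(\frakg)$, which is in particular locally $\frakn^+$-finite and hence locally $\fraku_\frakl$-finite since $\fraku_\frakl \subseteq \frakn^+$, handling (Q3).

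The real content is (Q2), describing the $\frakl$-module structure of $\Delta(M)$. My plan is to exploit PBW in the form $U(\frakg) \cong U(\fraku_\frakl^-) \otimes_\C U(\frakp_\frakl)$, which is an isomorphism of right $U(\frakp_\frakl)$-modules and is $\frakl$-equivariant for the adjoint action on the first factor. Tensoring with $M'$ over $U(\frakp_\frakl)$ produces an $\frakl$-module isomorphism
\[
\Delta(M) \;\cong\; U(\fraku_\frakl^-) \otimes_\C M'
\]
with diagonal $\frakl$-action. The PBW filtration decomposes $U(\fraku_\frakl^-) = \bigoplus_d V_d$ into finite-dimensional $\frakl$-modules, and correspondingly $\Delta(M) \cong \bigoplus_d V_d \otimes_\C M'$ as $\frakl$-modules.

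To conclude, I would verify that the twist $M' = M \otimes \C_\gamma$ stays inside $\calC^\boldlambda$. This amounts to observing that $\gamma = \rho(\frakg) - \rho(\frakl)$, viewed as a weight of $\frakh$, is constant on each Levi factor $\gl_{\enne_i}$, so it integrates to a one-dimensional character of $\frakl$. Thus $\C_\gamma$ is a finite-dimensional $\frakl$-module, and invoking \Cref{lem:2}(i) (stability of $\calC^\boldlambda$ under tensoring with finite-dimensional $\frakl$-modules) shows first that $M' \in \calC^\boldlambda$ and then that $V_d \otimes_\C M' \in \calC^\boldlambda$ for each $d$. This exhibits $\Delta(M)$ as a direct sum of objects of $\calC^\boldlambda$, finishing (Q2).

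The main conceptual point---and the place where I expect to spend the most care---is the PBW identification of $\Delta(M)$ as an $\frakl$-module, ensuring that the adjoint action is being tracked correctly under the tensor product over $U(\frakp_\frakl)$; the character-theoretic check for $\gamma$ is a minor bookkeeping step, and everything else reduces to the closure property \Cref{lem:2}(i) together with the exactness already established in \Cref{lem:13}.
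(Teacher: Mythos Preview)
Your proposal is correct and follows essentially the same route as the paper: use PBW to identify $\Delta(M)$ as an $\frakl$-module with $U(\fraku_\frakl^-)\otimes_\C M'$, then invoke closure of $\calC^\boldlambda$ under tensoring with finite-dimensional $\frakl$-modules (\Cref{lem:2}\ref{item:1}). In fact your write-up is more careful than the paper's one-line proof, which asserts that $\Delta(M)$ decomposes into ``modules isomorphic to $M$''; the precise statement---that the summands are of the form (finite-dimensional $\frakl$-module) $\otimes\, M'$---is exactly what you give, and is what the paper itself spells out later in the proof of \Cref{prop:1}. Your explicit check that $\gamma=\rho(\frakg)-\rho(\frakl)$ is constant on each Levi block (so that $\C_\gamma$ is a genuine one-dimensional $\frakl$-module and $M'\in\calC^\boldlambda$) fills a detail the paper leaves implicit.
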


\begin{proof}
  Let \(M \in \catO(\frakl)\). It follows by the PBW Theorem that \(\Delta(M)\) decomposes, as an \(\frakl\)--module, into a direct sum of modules isomorphic to \(M\). In particular, if \(M \in \calC^\boldlambda\) then \ref{item:7} holds for \(\Delta(M)\).
\end{proof}

We fix \(\tau\) to be the composition
  \begin{equation}
    \label{eq:23} \tau = (\lambda^{(1)}_1,\lambda^{(1)}_2,\dotsc,\lambda^{(2)}_1,\lambda^{(2)}_2,\dotsc).
  \end{equation}
We  denote  \(\catO(\frakl)^\tau\) the parabolic subcategory of \(\catO(\frakl)\) corresponding to the partition \(\tau\), that can be also identified with \( \catO^{\lambda^{(1)}} \boxtimes \dotsb \boxtimes \catO^{\lambda^{(m)}}\). By definition, \(\calC^\boldlambda\) is a Serre quotient of \(\catO(\frakl)^\tau\). Analogously, we claim that \(\calQ^\boldlambda\) is a Serre quotient of \(\catO(\frakg)^\tau\), i.e. a ``subquotient category'' of \(\catO(\frakg)\):

\begin{prop}
  \label{prop:4}
 The category \(\calQ^\boldlambda\) coincides with
the category of \(\calP^\boldlambda\)--presentable modules in \(\catO(\frakg)^\tau\), where \(\calP^\boldlambda\) is the additive category generated by the projective modules \(P^{\tau}(\bolda)\) for \(\bolda \in \weightsgl\) such that \(\tableau^{\boldlambda}(\bolda)\) is semi-standard.
\end{prop}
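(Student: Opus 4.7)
The plan is to prove the statement as two inclusions of full subcategories of $\catO(\frakg)^\tau$. Let $\calR^\boldlambda$ denote the full subcategory of $\calP^\boldlambda$-presentable objects in $\catO(\frakg)^\tau$; the claim is $\calR^\boldlambda = \calQ^\boldlambda$. The main tools are \Cref{lem:3}, the standardization adjunction $(\Delta,\pi)$ from \Cref{sec:stand-funct}, and the Serre-quotient description of $\calC^\boldlambda$ as the full subcategory of $\calP^\boldlambda_\frakl$-presentable modules in $\catO(\frakl)^\tau$.

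For $\calR^\boldlambda \subseteq \calQ^\boldlambda$, it suffices to show that $\calP^\boldlambda \subseteq \calQ^\boldlambda$ and that $\calQ^\boldlambda$ is closed under cokernels in $\catO(\frakg)$. When $\tableau^\boldlambda(\bolda)$ is semi-standard, \Cref{lem:3} applied componentwise in $\frakl$ shows that $P^\tau_\frakl(\bolda) = P^{\lambda^{(1)}}(\bolda^{(1)}) \boxtimes \dotsb \boxtimes P^{\lambda^{(m)}}(\bolda^{(m)})$ is prinjective in $\catO(\frakl)^\tau$, hence lies in $\calC^\boldlambda$. A standard parabolic-induction argument (using the block-wise exactness of $\pi_\xi$ from \Cref{lem:19} to conclude that $\Delta$ preserves projectivity, then comparing tops) identifies $\Delta(P^\tau_\frakl(\bolda))$ with the indecomposable projective $P^\tau_\frakg(\bolda)$ of $\catO(\frakg)^\tau$, which by \Cref{lem:20} lies in $\calQ^\boldlambda$. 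Closure of $\calQ^\boldlambda$ under cokernels is routine: (Q1) and (Q3) are manifestly stable under quotients, while (Q2) follows from the fact that $\calC^\boldlambda$, being a full subcategory of presentable modules, is closed under quotients in $\catO(\frakl)^\tau$.

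For the converse inclusion $\calQ^\boldlambda \subseteq \calR^\boldlambda$, fix $M \in \calQ^\boldlambda$. Condition (Q2) together with $\fraku_\tau \subseteq \frakn^+$ places $M$ in $\catO(\frakg)^\tau$, so $M$ admits a projective cover $P = \bigoplus_j P^\tau_\frakg(\boldc_j)$ in $\catO(\frakg)^\tau$; the central step is to show that every $\tableau^\boldlambda(\boldc_j)$ is semi-standard, which would imply $P \in \calP^\boldlambda$. Since $\calQ^\boldlambda$ is closed under quotients (same verification as for cokernels), each simple quotient $L(\boldc_j)$ of $M$ lies in $\calQ^\boldlambda$. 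Restricting $L(\boldc_j)$ to $\frakl$ and extracting the $\frakl$-block summand containing the highest weight vector, condition (Q2) forces that summand into $\calC^\boldlambda$; by closure of $\calC^\boldlambda$ under quotients, its simple head $L_\frakl(\boldc_j) = L^{(1)}(\boldc_j^{(1)}) \boxtimes \dotsb \boxtimes L^{(m)}(\boldc_j^{(m)})$ also lies in $\calC^\boldlambda$. Under the decomposition $\calC^\boldlambda = \calQ^{\lambda^{(1)}} \boxtimes \dotsb \boxtimes \calQ^{\lambda^{(m)}}$, each factor $L^{(i)}(\boldc_j^{(i)})$ must lie in the Serre quotient $\calQ^{\lambda^{(i)}}$; since any surjection from a prinjective onto a simple module factors through the projective cover, this forces $P^{\lambda^{(i)}}(\boldc_j^{(i)})$ to be a summand of a prinjective and hence prinjective itself, which by \Cref{lem:3} is equivalent to $\tableau^{\lambda^{(i)}}(\boldc_j^{(i)})$ being semi-standard.

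The main obstacle is completing this to a two-step presentation $P_1 \to P \to M \to 0$ with $P_1 \in \calP^\boldlambda$: the kernel $K = \ker(P \twoheadrightarrow M)$ need not satisfy (Q2), because $\calC^\boldlambda$ is not closed under subobjects in $\catO(\frakl)^\tau$, and one cannot simply iterate the previous argument on $K$. I would resolve this via the adjunction, working blockwise on the decomposition indexed by $\Xi$: applying $\pi_\xi$ to the exact sequence $0 \to K \to P \to M \to 0$ in a top stratum, and using $\pi_\xi \circ \Delta_\xi \cong \id$ (\Cref{lem:17}) together with the exactness of $\pi_\xi$ (\Cref{lem:19}), yields an exact sequence of $\calC^\boldlambda$-type objects relating $\pi_\xi(K)$, $\pi_\xi(P)$ and $\pi_\xi(M)$; choosing a $\calP^\boldlambda_\frakl$-projective cover of $\pi_\xi(K)$ in $\calC^\boldlambda$ and applying $\Delta_\xi$ produces a candidate $P_1 \in \calP^\boldlambda$, and the analysis of the adjunction counit $\Delta_\xi\pi_\xi \to \id$ from the proof of \Cref{prop:21} shows that the kernel and cokernel of the resulting map $P_1 \to K$ lie in strictly lower strata. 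A finite induction on the preorder $\preleq$ on $\Xi$, at each step repeating the top-level analysis to produce an additional $\calP^\boldlambda$-summand, then yields the required surjection $P_1 \twoheadrightarrow K$ and completes the presentation.
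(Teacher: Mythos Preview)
There is a genuine error in your first inclusion. You claim that $\Delta(P^\tau_\frakl(\bolda)) \cong P^\tau_\frakg(\bolda)$ for every $\bolda$ with semi-standard $\tableau^\boldlambda(\bolda)$, citing exactness of $\pi_\xi$ to conclude that $\Delta$ preserves projectivity. But $\pi_\xi$ is exact only as a functor $\catO(\frakg)^\tau_{\preleq\xi}\to\catO(\frakl)^\tau_\xi$, so $\Delta_\xi$ sends projectives to projectives of the \emph{truncated} category $\catO(\frakg)^\tau_{\preleq\xi}$, not of $\catO(\frakg)^\tau$. In general $\Delta(P^\tau_\frakl(\bolda))$ is the standard object $\Delta^\boldlambda(\bolda)$, a proper quotient of $P^\tau_\frakg(\bolda)$; the two coincide only when $\bolda$ lies in the top stratum, i.e.\ in the $W_\frakl$-orbit of a $\frakg$-dominant weight. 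The paper deals with exactly this issue: it first chooses $\boldb$ \emph{maximal} in $W\bolda$ with $\tableau^\boldlambda(\boldb)$ semi-standard, so that $\Delta(P^\tau_\frakl(\boldb))=P^\tau(\boldb)$ genuinely holds, and then reaches the remaining $P^\tau(\bolda)$ by tensoring with finite-dimensional modules and using that $\calQ^\boldlambda$ is closed under this operation.

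Your converse inclusion also diverges from the paper and carries an unfinished step. The paper's argument is elementary and stays on the level of $\frakl$-modules: since $M\in\calQ^\boldlambda$ is, as an $\frakl$-module, a direct sum of objects of $\calC^\boldlambda$, it is generated by vectors whose weights have semi-standard tableaux, so its projective cover $Q$ in $\catO(\frakg)^\tau$ lies in $\calP^\boldlambda$; then, using the first inclusion, $Q$ restricted to $\frakl$ is again in $\calC^\boldlambda$, and a direct prinjective-presentation argument inside $\catO(\frakl)^\tau$ shows that $K=\ker(Q\twoheadrightarrow M)$ is likewise generated by semi-standard weights. Your stratum-by-stratum induction instead needs, at each stage, that $\pi_{\xi}(K)$ lands in $\calC^\boldlambda_\xi$ (so that it has a $\calP^\boldlambda_\frakl$-cover), and that the residual cokernel after applying $\Delta_\xi$ still has this property in lower strata; you have not established either, and making this precise essentially requires the standard-filtration result (\Cref{prop:17}), which in the paper's logical order comes \emph{after} the present proposition. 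The paper's $\frakl$-module argument avoids all of this.
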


\begin{proof}
 Let us denote by \((\calP^\boldlambda)^\pres\) the category of \(\calP^\boldlambda\)--presentable modules. We start with the inclusion \((\calP^\boldlambda)^\pres \subseteq \calQ^\boldlambda\). Let \(\bolda \in \weightsgl\) be a weight such that \(\tableau^\boldlambda(\bolda)\) is semi-standard. Choose \(\boldb\) maximal in the \(W\)--orbit of \(\bolda\) such that \(\tableau^\boldlambda(\boldb)\) is also semi-standard. Let \(P^\tau_\frakl(\boldb) \in \catO(\frakl)^\tau\) be the projective cover of the simple module \(L_\frakl(\boldb) \in \catO(\frakl)^\tau\). Then \(\Delta(P^\tau_\frakl(\boldb)) \in \catO(\frakg)^\tau\) is projective (since \(\boldb\) is in the same \(W_\frakl\)--orbit of a \(\frakg\)--dominant weight), and \(\Delta^\tau(\boldb)\) is the parabolic Verma module with minimal weight appearing in a Verma filtration of it. Hence, being indecomposable, we have \(\Delta(P^\tau_\frakl(\boldb))=P^\tau(\boldb)\). 
 Since \(P^\tau_\frakl(\boldb) \in \calC^\boldlambda\), by \Cref{lem:20} we have that \(P^\tau(\boldb) \in \calQ^\boldlambda\). By tensoring \(P^\tau(\boldb)\) with finite-dimensional modules we can generate  \(P^\tau(\bolda)\) as a direct summand, and since \(\calQ^\boldlambda\) is closed under tensoring with finite-dimensional modules, we have \(P^\tau(\bolda) \in \calQ^\boldlambda\). Now, if \(M \in (\calP^\boldlambda)^\pres\), let \(P \mapto Q \surto M\) be a \(\calP^\boldlambda\)--presentation. Consider this as a sequence of \(\frakl\)--modules: \(P\) and \(Q\) decompose into a direct sum of objects from \(\calC^\boldlambda\) and so \(M\) decomposes into a direct sum of \(\frakl\)--modules which have a presentation via objects from \(\calC^\boldlambda\), hence \(M \in \calQ^\boldlambda\).\\
For the converse, let \(M \in \calQ^\boldlambda\). It follows immediately from the property \ref{item:7} that \(M \in \catO(\frakg)^\tau\). As an \(\frakl\)--module, \(M\) is generated by vectors of (shifted) weight \(\bolda\) such that \(\tableau^\boldlambda(\bolda)\) is semi-standard. Of course, this is also true as \(\frakg\)--module. Hence the projective cover \(Q\) of \(M\) in \(\catO(\frakg)^\tau\) is an element of \(\calP^\boldlambda\). Let \(K = \ker(Q \surto M)\) in \(\catO(\frakg)^\tau\). Consider the exact sequence \(K \into Q \surto M\) of \(\frakl\)--modules. Up to taking direct summands, we may suppose that this is a sequence of finitely generated \(U(\frakl)\)--modules, with \(M\in \calC^\boldlambda\) and, by the other inclusion proved in the previous paragraph, also \(Q \in \calC^\boldlambda\). Write \(Q=Q_M \oplus Q'\), where \(Q_M\) is the projective cover of \(M\), and \(K= Q' \oplus \ker (Q_M \surto M)\). Since \(M \in \calC^\boldlambda\), we have a presentation \(P_M \mapto Q_M \surto M\) with \(P_M\) prinjective in \(\catO(\frakl)^\tau\), hence we have a surjective map \(P_M \surto \ker(Q_M \surto M)\) and therefore a surjective map \(P' \surto K\) with \(P' = Q' \oplus P_M\). Notice that \(P'\) is also a prinjective object of \(\catO(\frakl)^\tau\). Hence it is generated by vectors of (shifted) weight \(\bolda\) such that \(\tableau^\boldlambda(\bolda)\) is semi-standard. The same holds obviously for  \(K\). Hence its projective cover \(P\) is in  \(\calP^\boldlambda\) and \(M\) has a presentation \(P \mapto Q \surto M\) with \(P,Q \in \calP^\boldlambda\). Hence  \((\calP^\boldlambda)^\pres \supseteq \calQ^\boldlambda\).
\end{proof}

\begin{remark}
  \label{rem:5}
  \Cref{prop:4} and its proof generalize \cite[Proposition~5.3.2]{miophd2}.
\end{remark}

Let \(\boldd\in \weightsgl^+(\frakg)\) be a dominant weight and \(P_\boldd\) a generator of \(\calP^\boldlambda_\boldd\). Then we have an equivalence of categories \(\calQ^\boldlambda_\boldd \cong \rmod{\End_\frakg(P_\boldd)}\) and obtain immediately:

\begin{corollary}
  \label{cor:2}
  The category \(\calQ^\boldlambda\), identified with the category of \(\calP^\boldlambda\)--presentable modules in \(\catO^\tau(\frakg)\), is a Schurian category.
\end{corollary}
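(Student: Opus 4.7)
The plan is to reduce the corollary to the Serre quotient description recalled in \Cref{sec:serre-subc-serre-1} and then to the general fact that module categories over finite-dimensional \(\C\)-algebras are Schurian. By the block decomposition of \(\calQ^\boldlambda\) inherited from \(\catO(\gl_\enne)\), it suffices to check that each block \(\calQ^\boldlambda_\boldd\) (for \(\boldd \in \weightsgl^+(\frakg)\)) is a Schurian category.

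First I would fix \(\boldd\) and invoke \Cref{prop:4} to identify \(\calQ^\boldlambda_\boldd\) with the full subcategory of \(\calP^\boldlambda_\boldd\)-presentable objects inside the finite abelian category \(\catO^\tau(\frakg)_\boldd\). By the equivalence recalled in \Cref{sec:serre-subc-serre-1} (applied to \(\calA = \catO^\tau(\frakg)_\boldd\) and \(\Gamma\) the set of simples killed by projection onto \(\operatorname{Add}(P_\boldd)\)), this subcategory is precisely the Serre quotient of \(\catO^\tau(\frakg)_\boldd\) by the Serre subcategory of simples \(L(\bolda)\) with \(\tableau^\boldlambda(\bolda)\) not semi-standard. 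Concretely, this yields the equivalence \(\calQ^\boldlambda_\boldd \cong \rmod{\End_\frakg(P_\boldd)}\) displayed in the paragraph preceding the statement, where \(P_\boldd\) is a generator of \(\calP^\boldlambda_\boldd\).

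Next I would verify that \(B := \End_\frakg(P_\boldd)\) is a finite-dimensional \(\C\)-algebra. This follows because \(\calP^\boldlambda_\boldd\) contains only finitely many indecomposable summands up to isomorphism (one for each semi-standard tableau \(\tableau^\boldlambda(\bolda)\) in the \(W\)-orbit of \(\boldd\), by \Cref{lem:3}), and each \(\Hom_\frakg(P^\tau(\bolda),P^\tau(\bolda'))\) is finite-dimensional since \(\catO(\gl_\enne)\) is Schurian. Once \(B\) is known to be finite-dimensional, the category \(\rmod{B}\) is abelian and \(\C\)-linear with enough projective and injective objects (the latter coming from \(B\)-duals of projectives), every module is of finite length, and the endomorphism algebra of every simple \(B\)-module is \(\C\) by Schur's lemma (since \(\C\) is algebraically closed). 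These are exactly the axioms of a Schurian category.

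The argument is essentially bookkeeping once \Cref{prop:4} is in place; the one point that deserves attention is ensuring that the Serre quotient description really applies at the level of each block — that is, that the \(\calP^\boldlambda\)-presentable condition respects the block decomposition. This is automatic because \(\calP^\boldlambda\) is itself a direct sum over blocks (each indecomposable \(P^\tau(\bolda) \in \calP^\boldlambda\) lies in a single block), so there is no mixing between blocks and the block decomposition \(\calQ^\boldlambda = \bigoplus_\boldd \calQ^\boldlambda_\boldd\) is genuine. Assembling the Schurian structure on each block gives the Schurian structure on \(\calQ^\boldlambda\).
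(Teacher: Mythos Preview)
Your proof is correct and follows exactly the approach the paper takes: the paper's entire argument is the sentence preceding the corollary, namely the equivalence \(\calQ^\boldlambda_\boldd \cong \rmod{\End_\frakg(P_\boldd)}\) obtained from \Cref{prop:4} via the Serre quotient description in \cref{sec:serre-subc-serre-1}, from which the Schurian property is immediate. The only minor quibble is that your reference to \Cref{lem:3} concerns the single-partition case; the finiteness of indecomposable summands of \(P_\boldd\) for a multipartition is already contained in the statement of \Cref{prop:4} (or simply in the finiteness of the \(W\)-orbit of \(\boldd\)).
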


Hence the indecomposable projective modules of \(\calQ^\boldlambda\) are the \(P^\tau(\bolda)\) for \(\bolda \in \weightsgl\) such that \(\tableau^\boldlambda(\bolda)\) is semi-standard. We will write \(P^\boldlambda(\bolda)\) for \(P^\tau(\bolda)\) when we consider it as an object of \(\calQ^\boldlambda\).  Let \(S^\boldlambda(\bolda)\) be the unique simple quotient of \(P^\boldlambda(\bolda)\) in \(\calQ^\boldlambda\). (Note that \(S^\boldlambda(\bolda)\) is not, in general, an irreducible \(\frakg\)--module!) Then the \(S^\boldlambda(\bolda)\)'s for \(\bolda \in \weightsgl\) such that \(\tableau^\boldlambda(\bolda)\) is semi-standard give the set of simple objects of \(\calQ^\boldlambda\) up to isomorphism.

\begin{prop}\label{prop:1}
 Parabolic induction defines an exact  standardization functor 
 \begin{equation}
   \Delta \colon \calC^\boldlambda \longrightarrow \calQ^\boldlambda.
\end{equation}
\end{prop}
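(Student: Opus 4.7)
My plan is to assemble results already established. The proposition makes two assertions about the restriction of $\Delta$ to $\calC^\boldlambda$: that it lands in $\calQ^\boldlambda$ and is exact; the designation ``standardization functor'' is justified by the framework of \Cref{sec:stand-funct}.

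For the containment $\Delta(\calC^\boldlambda) \subseteq \calQ^\boldlambda$, the essential content is already provided by \Cref{lem:20}. Spelled out, given $M \in \calC^\boldlambda \subseteq \catO(\frakl)$, I verify axioms (Q1)--(Q3) of \Cref{def:10}. Finite generation over $U(\gl_\enne)$ is immediate from finite generation of $M$ over $U(\frakl)$ combined with the PBW theorem. Local $\fraku_\frakl$-finiteness is automatic because $\Delta(M) \in \catO(\gl_\enne)$ by \Cref{lem:13} and $\fraku_\frakl \subseteq \frakn^+$. The decomposition condition (Q2) is the substantive point: PBW gives an $\frakl$-module isomorphism $\Delta(M) \cong U(\fraku_\frakl^-) \otimes_\C M'$, and since $\frakl$ is reductive, $U(\fraku_\frakl^-)$ breaks into finite-dimensional $\frakl$-submodules under the adjoint action. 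Thus $\Delta(M)$ decomposes as an $\frakl$-module into a direct sum of terms of the form $V \otimes M'$ with $V$ a finite-dimensional $\frakl$-module. By \Cref{lem:2}\ref{item:1}, each such summand remains in $\calC^\boldlambda$.

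Exactness of the restriction is inherited directly from the exactness of $\Delta$ on all of $\catO(\frakl)$, which is \Cref{lem:13}.

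I do not anticipate significant obstacles; the proof is essentially an assembly step relying on the PBW theorem and stability of $\calC^\boldlambda$ under tensor products. The name ``standardization functor'' is justified a posteriori: the formalism of \Cref{sec:stand-funct}, and in particular \Cref{prop:21}, identifies $\Delta_\xi$ as the left adjoint of the Serre quotient functor $\pi_\xi$, which is the abstract definition of a standardization functor in the sense of \Cref{sec:stand-strat-categ}. Combined with the description of $\calQ^\boldlambda$ as a category of $\calP^\boldlambda$-presentable modules (\Cref{prop:4}), this structure descends to give the expected standardization $\calC^\boldlambda \to \calQ^\boldlambda$, whose standardly stratified structure on $\calQ^\boldlambda$ will be checked in the sequel.
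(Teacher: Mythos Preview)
There is a genuine gap in your exactness argument. You write that ``exactness of the restriction is inherited directly from the exactness of $\Delta$ on all of $\catO(\frakl)$, which is \Cref{lem:13}.'' This would be correct if $\calC^\boldlambda$ and $\calQ^\boldlambda$ were full abelian subcategories of $\catO(\frakl)$ and $\catO(\frakg)$ with the inherited abelian structure. But they are not: both are Serre \emph{quotients} (of $\catO(\frakl)^\tau$ and $\catO(\frakg)^\tau$ respectively, via \Cref{prop:4}), and their abelian structures differ from those of the ambient categories. In particular, the inclusion $\sfi\colon \calC^\boldlambda \hookrightarrow \catO(\frakl)$ is only right exact, so a short exact sequence $M_1 \hookrightarrow M_2 \twoheadrightarrow M_3$ in $\calC^\boldlambda$ becomes in $\catO(\frakl)$ a four-term exact sequence $0 \to K \to \sfi M_1 \to \sfi M_2 \to \sfi M_3 \to 0$ with a possibly nonzero kernel $K$. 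Applying the (genuinely exact) $\Delta$ gives a four-term sequence in $\catO(\frakg)^\tau$, and to conclude exactness in $\calQ^\boldlambda$ one must show that $\Delta K$ dies under the coapproximation functor $\sfC_\frakg \colon \catO(\frakg)^\tau \to \calQ^\boldlambda$.

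The paper identifies precisely this subtlety as the non-trivial content of the proposition. It reduces to the case $K = L_\frakl(\bolda)$ a simple module with $\sfC_\frakl(L_\frakl(\bolda))=0$, and then uses the PBW decomposition you invoke (so that $\Delta L_\frakl(\bolda)$ is, as an $\frakl$-module, a sum of $E \otimes L_\frakl(\bolda)$ for finite-dimensional $E$) together with an adjunction argument to show that every $\frakg$-composition factor $L(\boldc)$ of $\Delta L_\frakl(\bolda)$ has $\tableau^\boldlambda(\boldc)$ column-strict but not semi-standard, whence $\sfC_\frakg(\Delta L_\frakl(\bolda))=0$. Your argument for containment is fine and matches \Cref{lem:20}, but you need to supply this additional step for exactness.
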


\begin{proof}
 Note that the claim is non-trivial since the abelian structure of the Serre quotient category $\calC^\boldlambda$ is not the same as the abelian structure of the whole category \(\catO\).  Let therefore  \(M_1 \into M_2 \surto M_3\) be a short exact sequence in \(\calC^\boldlambda\). Since the inclusion \(\sfi \colon \calC^\boldlambda \rightarrow \catO(\frakl)\) is right exact, we have an exact sequence
\begin{equation}
  \label{eq:9}
  0 \longrightarrow K \longrightarrow \sfi M_1 \longrightarrow \sfi M_2 \longrightarrow \sfi M_3 \longrightarrow 0
\end{equation}
in \(\catO(\frakl)\), where \(K\) is just the kernel of the map \(\sfi M_1 \rightarrow \sfi M_2\) in \(\catO(\frakl)\). Actually the exact sequence \eqref{eq:9} lives in the parabolic category \(\catO(\frakl)^\tau\), and, by construction, \(\mathsf C_\frakl (K) = 0 \), where \(\mathsf C_\frakl \colon \catO(\frakl)^\tau \mapto \calC^\boldlambda\) is the coapproximation functor. We apply \(\Delta\) to \eqref{eq:9} and obtain the exact sequence 
\begin{equation}
  \label{eq:10}
    0 \longrightarrow \Delta K \longrightarrow \Delta \sfi M_1 \longrightarrow \Delta \sfi M_2 \longrightarrow \Delta \sfi M_3 \longrightarrow 0
\end{equation}
in \(\catO(\frakg)\), or actually in \(\catO(\frakg)^\tau\). In order to conclude that \(\Delta \sfi M_1 \into \Delta \sfi M_2 \surto \Delta \sfi M_3\) is exact in \(\calQ^\boldlambda\), we need to show that \(\mathsf C_\frakg (\Delta K) = 0\), where here \(\mathsf C_\frakg\) denotes the coapproximation functor \(\catO(\frakg)^\tau \mapto \calQ^\boldlambda\) provided by Proposition~\ref{prop:4}. Since \(\Delta\) and \(\mathsf C_\frakg\) are exact, it is enough to show the claim for \(K=L_\frakl(\bolda)\) a simple module in \(\catO(\frakl)^\tau\).

Let therefore \(L_\frakl(\bolda) \in \catO(\frakl)^\tau\) be a simple module such that \(\mathsf C_\frakl (L_\frakl(\bolda)) = 0\).  Now, it follows from the PBW Theorem and from the fact that \(U(\frakg)\) is locally finite for the adjoint action, that \(\Delta L_\frakl(\bolda)\), as an \(\frakl\)--module, decomposes into a direct sum of modules which are obtained from \(L_\frakl(\bolda)\) by tensoring with finite-dimensional \(\frakl\)--modules.
Let \(\boldb\) be any weight such that \(\tableau^\boldlambda(\boldb)\) is semi-standard, and let \(P^\boldlambda_\frakl(\boldb) \in \catO(\frakl)^\tau\) be the corresponding projective module (which by assumption lies in \(\calA^\boldlambda\) and is also injective). Let \(E\) be a finite dimensional \(\frakl\)--module. Then we have
\begin{equation}
  \label{eq:11}
  \Hom_{\catO(\frakl)^\tau}\big(P_\frakl^\boldlambda(\boldb),E \otimes  L_\frakl(\bolda) \big) \cong \Hom_{\catO(\frakl)^\tau}\big(E^* \otimes P_\frakl^\boldlambda(\boldb), L_\frakl(\bolda) \big) = 0,
\end{equation}
where the last equality follows since \(E^* \otimes P_\frakl^\boldlambda(\boldb)\) is the direct sum of projective modules which by Lemma~\ref{lem:2}~\ref{item:1} lie in \(\calC^\boldlambda\), while by assumption  \(\sfC_\frakl( L_\frakl(\bolda))=0\).
 Hence \(\sfC_\frakl (E \otimes L_\frakl(\bolda))=0\), or in other words all composition factors of \(E \otimes L_\frakl(\bolda)\) are of type \(L_\frakl(\boldc)\), where \(\tableau^\boldlambda (\boldc)\) is column-strict but not semi-standard. By our discussion, the same holds, as an \(\frakl\)--module, for \(\Delta (L_\frakl(\bolda))\), which is just an infinite direct sum of such objects.
 A fortiori it must be true that all composition factors of \(\Delta(L_\frakl(\bolda))\) as a \(\frakg\)--module are of type \(L(\boldc)\), where \(\tableau^\boldlambda (\boldc)\) is column-strict but not semi-standard.  This implies immediately that \(\mathsf C_\frakg \Delta (L_\frakl(\bolda))=0\), and we are done.
\end{proof}

\subsection{Standardly stratified structure}
\label{sec:struct-stand-strat}

As before, we fix a multipartition \(\boldlambda\) of \(\enne\). 

Let \(\boldd\in \weightsgl^+(\frakg)\) be a dominant weight, and fix a block \(\calQ^\boldlambda_\boldd\). 
 Consider \(\St^\boldlambda(\boldd)\), the set of semi-standard multitableaux of shape \(\boldlambda\) and type \(\boldd\). We can view \(\St^\boldlambda(\boldd) \subseteq W\boldd\). In particular, the map \(p\colon W\boldd \mapto \Xi\) from \cref{sec:stand-funct} restricts to a map \(p\colon \St^\boldlambda(\boldd)\surto \Xi\) and induces a preorder \(\preleq \) also on \(\St^\boldlambda(\boldd)\). As we did for \(\catO(\frakl)\), we set \(\calC^\boldlambda_\xi=\calC^\boldlambda_{\tilde \xi}\). 

For \(\xi \in \Xi\) let \(\calQ^\boldlambda_{\preleq \xi}\) (respectively, \(\calQ^\boldlambda_{\prel \xi})\) be the Serre subcategory of \(\calQ^\boldlambda\) generated by the simple objects \(S^\boldlambda(\bolda)\) for \(p(\bolda) \preleq \xi\) (respectively, \(p(\bolda) \prel \xi\)). Let also  \(\calQ^\boldlambda_{\sim \xi}\) be the Serre quotient \(\calQ^\boldlambda_{\preleq \xi} / \calQ^\boldlambda_{\prel \xi}\). 

Note that the category \(\calQ^\boldlambda_{\preleq \xi}\) is the full category of all modules of \(\calQ^\boldlambda_\boldd\) which have weights smaller than or equal to \(\tilde\xi\), that is \(\calQ^\boldlambda_{\preleq \xi} = \calQ^\boldlambda_\boldd \cap \catO(\frakg)_{\leq \tilde \xi}\),  cf. \Cref{lem:16}. (If \(S^{\xi}(\bolda)\) denotes the simple module of \(\calC^\boldlambda_\xi\) corresponding to \(\bolda\) then \(S^\boldlambda(\bolda)\) is a quotient of \(\Delta(S^{\xi}(\bolda))\), hence an object of \(\catO(\frakg)_{\leq \xi}\).) 

Let \(\bolda\) be a weight such that \(\tableau^\boldlambda(\bolda)\) is semi-standard, and let \(\xi=p(\bolda)\). We denote by \(P_\frakl^\xi(\bolda)\) the indecomposable projective object corresponding to \(\bolda\) in \(\calC^\boldlambda_\xi\), and we let \(\Delta^\boldlambda(\bolda)=\Delta_\xi(P^\xi(\bolda))\). As well known, \(P_\frakl^\xi(\bolda)\) is generated by one element \(v\) of (shifted) weight \(\bolda\). Then \(\Delta^\boldlambda(\bolda)\) is generated by \(1 \otimes v\), which has also (shifted) weight \(\bolda\). Therefore there is an epimorphism \(P^\boldlambda(\bolda) \surto \Delta^\boldlambda(\bolda)\). 

\begin{prop}[{\cite{MR1921761}}]
  \label{prop:17}
  The kernel of this epimorphism \(P^\boldlambda(\bolda) \surto \Delta^\boldlambda(\bolda)\) admits a filtration by  standard modules \(\Delta(\boldb)\) with \(p(\boldb)\preg p(\bolda)\).
\end{prop}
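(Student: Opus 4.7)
My approach is to construct the desired filtration by combining the parabolic Verma flags of $P^\boldlambda(\bolda)$ and $\Delta^\boldlambda(\bolda)$ coming from the quasi-hereditary structures on $\catO(\frakg)^\tau$ and $\catO(\frakl)^\tau$.

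First, by \Cref{prop:4} the projective $P^\boldlambda(\bolda)$ is identified with the parabolic projective $P^\tau(\bolda)\in\catO(\frakg)^\tau$, which carries a parabolic Verma flag with subquotients $M^\tau(\boldc)$ for $\boldc$ in the $W$-orbit $W\boldd$. Simultaneously, $P^\xi_\frakl(\bolda)=P^\tau_\frakl(\bolda)$ has a parabolic Verma flag in $\catO(\frakl)^\tau$ with subquotients $M^\tau_\frakl(\boldb)$ for $\boldb\in W_\frakl\bolda$; applying the exact parabolic induction $\Delta$ of \Cref{lem:13}, and using the PBW-theoretic identity $\Delta(M^\tau_\frakl(\boldb))\cong M^\tau(\boldb)$, yields a parabolic Verma flag on $\Delta^\boldlambda(\bolda)$ with subquotients $M^\tau(\boldb)$ for $\boldb\in W_\frakl\bolda$ and multiplicities $(\Delta^\boldlambda(\bolda):M^\tau(\boldb))=(P^\tau_\frakl(\bolda):M^\tau_\frakl(\boldb))$.

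Next, I would use \Cref{lem:16} to refine the Verma flag of $P^\tau(\bolda)$ into a filtration $0=F_0\subset\dotsb\subset F_r=P^\tau(\bolda)$ respecting the $\Xi$-stratification: each consecutive quotient $F_i/F_{i-1}$ is an extension of parabolic Vermas $M^\tau(\boldc)$ with a fixed value $p(\boldc)=\xi_i\in\Xi$, arranged so that $\xi_i\succ\xi_j$ for $i<j$ and $\xi_r=p(\bolda)$. The top quotient $F_r/F_{r-1}$ then has Verma subquotients only in the $W_\frakl$-orbit of $\bolda$ and is generated by a vector of weight $\bolda$; by comparing parabolic Verma multiplicities via a BGG reciprocity argument in both $\catO(\frakg)^\tau$ and $\catO(\frakl)^\tau$, together with the adjunction of \Cref{prop:21}, I would identify $F_r/F_{r-1}\cong\Delta^\boldlambda(\bolda)$, which supplies the desired epimorphism $P^\boldlambda(\bolda)\surto\Delta^\boldlambda(\bolda)$ with kernel $F_{r-1}$.

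The crux, and the main obstacle, is identifying each lower piece $F_i/F_{i-1}$ with a direct sum of standards $\Delta^\boldlambda(\boldb)$ for $p(\boldb)=\xi_i\succ p(\bolda)$. My plan is to apply the exact functor $\pi_{\xi_i}$ of \Cref{prop:21}, show that $\pi_{\xi_i}(F_i/F_{i-1})$ lies in $\calC^\boldlambda_{\xi_i}$ and is projective there (using \Cref{lem:2} and the characterisation of $\calC^\boldlambda$-objects as $\fraku_\frakl$-invariants), and then invoke the isomorphism $\pi_{\xi_i}\Delta_{\xi_i}\cong\id$ of \Cref{lem:17} to conclude $F_i/F_{i-1}\cong\Delta_{\xi_i}(\pi_{\xi_i}(F_i/F_{i-1}))$, a direct sum of standards. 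The hard part is verifying projectivity of $\pi_{\xi_i}(F_i/F_{i-1})$ in $\calC^\boldlambda_{\xi_i}$ and surjectivity of the counit onto $F_i/F_{i-1}$; this should follow by induction on $\Xi$, using that the kernel and cokernel of the counit $\Delta_{\xi_i}\pi_{\xi_i}\to\id$ lie in $\catO(\frakg)^\tau_{\prel\xi_i}$ (as extracted from the proof of \Cref{prop:21}) and that, on the piece $F_i/F_{i-1}$ whose parabolic Vermas all lie in the single orbit $W_\frakl\tilde\xi_i$, these lower terms must vanish for weight reasons.
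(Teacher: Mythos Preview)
Your approach is genuinely different from the paper's. The paper does not analyse the parabolic Verma flag of $P^\tau(\bolda)$ at all; instead it writes down, for $h\gg 0$, the induced module
\[
U(\frakg)\otimes_{\frakp_\frakl}\bigl(U(\fraku_\frakl)/U(\fraku_\frakl)\fraku_\frakl^{\,h}\otimes P^\xi_\frakl(\bolda)\bigr),
\]
which carries a \emph{manifest} standard filtration (from the degree filtration of $U(\fraku_\frakl)/\fraku_\frakl^{\,h}$, each graded piece being a finite-dimensional $\frakl$--module tensored with $P^\xi_\frakl(\bolda)$, hence projective in $\calC^\boldlambda$), is projective in $\calQ^\boldlambda$ for $h$ large, and contains $P^\boldlambda(\bolda)$ as a summand. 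One then uses that direct summands of standard-filtered objects are standard-filtered. The paper explicitly remarks that an argument of your flavour (``mimicking the corresponding result for the BGG category $\catO$'') is possible, but that ``the combinatorics gets quite tricky''.

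Your sketch has a real gap precisely at the point you flag as ``the hard part''. The counit argument you outline does work: since the essential image of $\Delta_{\xi_i}$ is closed under extensions (a 5-lemma computation using $\pi_{\xi_i}\Delta_{\xi_i}\cong\id$), each stratum $F_i/F_{i-1}$ is indeed $\Delta_{\xi_i}(N_i)$ with $N_i=\pi_{\xi_i}(F_i/F_{i-1})$. Moreover, writing $Q_i=P^\tau(\bolda)/F_{i-1}$ one can argue that $Q_i$ is projective in $\catO(\frakg)^\tau_{\preleq\xi_i}$ (Zuckerman truncation of a projective) and that $\pi_{\xi_i}(Q_i)\cong N_i$, so $N_i$ is projective in $\catO(\frakl)^\tau_{\xi_i}$. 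But this is \emph{not} what you need: the standard modules $\Delta^\boldlambda(\boldb)$ are by definition $\Delta_{\xi_i}$ applied to projectives of $\calC^\boldlambda_{\xi_i}$, i.e.\ to \emph{prinjective} modules of $\catO(\frakl)^\tau_{\xi_i}$, those indexed by semi-standard $\boldb$. Nothing in your argument rules out that $N_i$ has an indecomposable summand $P^\tau_\frakl(\boldb)$ with $\tableau^\boldlambda(\boldb)$ column-strict but not semi-standard. The induction on $\Xi$ you propose does not supply this, and the weight-reason vanishing only handles the counit, not the prinjectivity. This is exactly the subtlety the paper's explicit construction sidesteps: there every layer is visibly $\Delta$ of (finite-dimensional)$\,\otimes\,P^\xi_\frakl(\bolda)$, which stays in $\calC^\boldlambda$ by \Cref{lem:2}\ref{item:1}.
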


The proof is based on the fact that for \(h\) large enough the module
\begin{equation}
U(\frakg) \otimes_\frakp \big(U(\fraku_\frakl)/U(\fraku_\frakl)\fraku_\frakl^h \otimes P^\xi(\bolda)\big)\label{eq:94}
\end{equation}
is projective in \(\calQ^\boldlambda\), contains \(P^\boldlambda(\bolda)\) as a direct summand  and has the required standard filtration.
For more details see \cite[Theorem~3]{MR1921761}. It is also possible to prove the result mimicking the corresponding result for the BGG category \(\catO\), see \cite[Chapter~3]{MR2428237}, although the combinatorics gets quite tricky.

\begin{lemma}
  The functor \(\pi_\xi\), restricted to
  \(\calQ^\boldlambda_{\preleq \xi}\), has image in
  \(\calC^\boldlambda_\xi\) and is right adjoint to
  \(\Delta_\xi\colon \calC^\boldlambda_\xi \mapto
  \calQ^\boldlambda_{\preleq \xi}\).\label{lem:15}
\end{lemma}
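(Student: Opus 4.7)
The plan is to derive both assertions from the adjunction of \cref{eq:91} by passing through the Serre-quotient inclusions. Denote by $\sfi_\calQ \colon \calQ^\boldlambda_{\preleq \xi} \hookrightarrow \catO(\frakg)^\tau_{\preleq \xi}$ and $\sfi_\calC \colon \calC^\boldlambda_\xi \hookrightarrow \catO(\frakl)^\tau_\xi$ the fully faithful inclusions realising the smaller categories as subcategories of presentable objects (left adjoints to the respective coapproximation functors, cf.\ \Cref{sec:serre-subc-serre-1}). By \Cref{prop:1}, the functor $\Delta_\xi$ on $\calC^\boldlambda_\xi$ is the restriction of the big parabolic induction, in the sense that $\sfi_\calQ \circ \Delta_\xi \cong \Delta_\xi \circ \sfi_\calC$.

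Granting the image claim for a moment, the adjunction will follow from the chain
\begin{align*}
  \Hom_{\calQ^\boldlambda_{\preleq \xi}}(\Delta_\xi N, M)
  &\cong \Hom_{\catO(\frakg)^\tau_{\preleq \xi}}(\Delta_\xi N, M) \\
  &\cong \Hom_{\catO(\frakl)^\tau_\xi}(N, \pi_\xi M)
    \cong \Hom_{\calC^\boldlambda_\xi}(N, \pi_\xi M)
\end{align*}
of natural isomorphisms for $N \in \calC^\boldlambda_\xi$ and $M \in \calQ^\boldlambda_{\preleq \xi}$, using full faithfulness of $\sfi_\calQ$ and $\sfi_\calC$ together with the adjunction of \cref{eq:91}.

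The image claim is the substantive content. The plan is to reduce to the $\calP^\boldlambda$-generators: by \Cref{prop:4} every $M \in \calQ^\boldlambda_{\preleq \xi}$ admits a presentation $P_1 \to P_2 \twoheadrightarrow M$ with the $P_i$ direct sums of modules $P^\boldlambda(\bolda)$ for $\tableau^\boldlambda(\bolda)$ semi-standard, and since $\pi_\xi$ is exact (\Cref{lem:19}) the induced sequence $\pi_\xi P_1 \to \pi_\xi P_2 \twoheadrightarrow \pi_\xi M$ reduces the problem to proving $\pi_\xi P^\boldlambda(\bolda) \in \calC^\boldlambda_\xi$. Column-strictness of $\tableau^\boldlambda(\bolda)$ is equivalent to $\bolda$ being $\tau$-dominant, so $\bolda = \tilde{p(\bolda)}$. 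In the case $p(\bolda) = \xi$, the proof of \Cref{prop:4} exhibits $P^\boldlambda(\bolda)$ as $\Delta(P^\tau_\frakl(\bolda))$, and \Cref{lem:17} then gives $\pi_\xi P^\boldlambda(\bolda) = P^\tau_\frakl(\bolda) \in \calP^\boldlambda_\frakl \subseteq \calC^\boldlambda_\xi$.

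The hard part will be the strict case $p(\bolda) \prel \xi$, where $P^\boldlambda(\bolda)$ is not directly in the image of $\Delta_\xi|_{\calC^\boldlambda_\xi}$. Here I would exploit condition~\ref{item:7} of \Cref{def:10}: restricted to $\frakl$, the module $P^\boldlambda(\bolda)$ decomposes along $\frakl$-blocks into a direct sum of objects of $\calC^\boldlambda$. Since $\fraku_\frakl$ acts $\frakl$-equivariantly via the adjoint action, taking $\fraku_\frakl$-invariants commutes with this block decomposition, and $\pi_\xi P^\boldlambda(\bolda)$ is then extracted as the $\tilde\xi$-block summand of the $\fraku_\frakl$-invariants of an object already belonging to $\calC^\boldlambda$. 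Verifying that this summand remains $\calP^\boldlambda_\frakl$-presentable is the delicate step; my plan is to combine the standard filtration of $P^\boldlambda(\bolda)$ from \Cref{prop:17} with $\pi_\xi \Delta \cong \id$ on each standard subquotient, and propagate presentability along the filtration. Once the image claim is secured, the adjunction chain above concludes the argument.
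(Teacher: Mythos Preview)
Your overall outline is close to the paper's, but there is a genuine gap in the reduction step. You write that for \(M \in \calQ^\boldlambda_{\preleq \xi}\) a presentation \(P_1 \to P_2 \twoheadrightarrow M\) with \(P_1,P_2 \in \calP^\boldlambda\) lets you apply \(\pi_\xi\) termwise and invoke \Cref{lem:19}. The problem is that the projectives \(P_1,P_2\) are objects of \(\catO(\frakg)^\tau\), not of \(\catO(\frakg)^\tau_{\preleq \xi}\): by \Cref{prop:17} a module \(P^\boldlambda(\bolda)\) is filtered by standard modules \(\Delta^\boldlambda(\boldb)\) with \(p(\boldb) \succeq p(\bolda)\), so it typically has weights strictly above \(\tilde\xi\). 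The functor \(\pi_\xi\) is only defined on \(\catO(\frakg)^\tau_{\preleq\xi}\), and even if you extend it naively via \(\pr_\xi \circ \calF\), the exactness proved in \Cref{lem:19} uses precisely the weight bound of \(\catO(\frakg)_{\preleq \xi}\) and fails otherwise. So the sequence \(\pi_\xi P_1 \to \pi_\xi P_2 \to \pi_\xi M \to 0\) is not available as written. The paper's fix is to first apply the right-exact Zuckerman functor \(\sfZ_{\preleq \xi}\) to the presentation, landing in \(\catO(\frakg)^\tau_{\preleq \xi}\); then \Cref{prop:17} shows that \(\sfZ_{\preleq \xi}P_1\) and \(\sfZ_{\preleq \xi}P_2\) are filtered by standard modules \(\Delta^\boldlambda(\boldb)\), and \(\pi_\xi \circ \Delta_\xi \cong \id\) together with exactness of \(\pi_\xi\) on the truncated category gives that \(\pi_\xi(\sfZ_{\preleq \xi}P_j)\) are projective objects of \(\calC^\boldlambda_\xi\). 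This is exactly the step you sketch at the very end for the ``hard case'', but it is needed uniformly, not just when \(p(\bolda) \prel \xi\).

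A second, smaller error: your claim ``column-strictness of \(\tableau^\boldlambda(\bolda)\) is equivalent to \(\bolda\) being \(\tau\)-dominant, so \(\bolda = \tilde{p(\bolda)}\)'' conflates \(\tau\)-dominance with \(\frakl\)-dominance. The representative \(\tilde{p(\bolda)}\) is the \(\frakl\)-dominant weight in \(W_\frakl\bolda\), and \(\tau\) is a strict refinement of the composition defining \(\frakl\) whenever any \(\lambda^{(j)}\) has more than one column. So there are in general several semi-standard \(\bolda\) with the same \(p(\bolda)=\xi\), none of which equals \(\tilde\xi\); in particular the identification \(P^\boldlambda(\bolda)=\Delta(P^\tau_\frakl(\bolda))\) you cite from the proof of \Cref{prop:4} only holds for the specific maximal \(\boldb\) treated there, not for arbitrary \(\bolda\) with \(p(\bolda)=\xi\). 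Finally, your alternative route via \ref{item:7} does not settle the question either: \(\pi_\xi M\) is an \(\frakl\)-\emph{submodule} of a block-summand of \(M\), and \(\calC^\boldlambda_\xi\), being a Serre quotient rather than a Serre subcategory, is not closed under subobjects in \(\catO(\frakl)^\tau_\xi\).
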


By a slight abuse of notation, we will denote by \(\pi_\xi\) both the functor defined on \(\catO(\frakg)^\tau_{\preleq \xi}\) and its restriction to \(\calQ^\boldlambda_{\preleq \xi}\), but we will always specify which functor we will be considering.

\begin{proof}
  Let \(\tau\) be the composition \cref{eq:23}. The proof is based on the exactness of the functor \(\pi_\xi \colon \catO(\frakg)^{\tau}_{\preleq \xi} \mapto \catO^{\tau}(\frakl)_\xi\). Since \(\calQ^\boldlambda_{\preleq \xi}\), although being a full subcategory of \(\catO(\frakg)^{\tau}_{\preleq \xi}\), does not inherit the abelian structure of the latter (and the same for \(\calC^\boldlambda_\xi\)), we have to be careful.\\
 Let \(M \in \calQ^\boldlambda_{\preleq \xi}\). Then \(M\) has a presentation \(P \mapto Q \surto M\) with \(P,Q \in \calP^\boldlambda\). In general, \(P\) and \(Q\) will not be objects of \(\catO(\frakg)^\tau_{\preleq \xi}\). We apply the right-exact Zuckerman functor \(\sfZ_{\preleq \xi}\colon \catO(\frakg)^\tau \mapto \catO(\frakg)^\tau_{\preleq \xi}\) and get a presentation \(\sfZ_{\preleq \xi} P \mapto \sfZ_{\preleq \xi} Q \surto  M\). Now, it follows from \Cref{prop:17} above that \(\sfZ_{\preleq \xi}P\) and \(\sfZ_{\preleq \xi}Q\) are filtered, as \(\frakg\)--modules, by standard modules \(\Delta^\boldlambda(\bolda)\).
Since \(\pi_\xi \circ \Delta_\xi \cong \id\) and \(\pi_\xi\) is exact, it follows that \(\pi_\xi(\sfZ_{\preleq \xi} P), \pi_\xi(\sfZ_{\preleq \xi} Q) \in \calC^\boldlambda\), and they are projective. Again, since \(\pi_\xi\) is exact, we have a presentation \(\pi_\xi(\sfZ_{\preleq \xi} P) \mapto \pi_\xi(\sfZ_{\preleq \xi}Q) \surto \pi_\xi(M)\), hence \(\pi_\xi (M) \in \calC^\boldlambda_\xi\).
\end{proof}

\begin{prop}
  \label{prop:6}
  The pair \((\calC^\boldlambda_\xi,\pi_\xi)\) is the Serre quotient of \(\calQ^\boldlambda_{\preleq \xi}\) modulo \(\calQ^\boldlambda_{\prel \xi}\).
\end{prop}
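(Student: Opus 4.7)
The plan is to verify the universal property of the Serre quotient, adapting the argument of \Cref{prop:21} to the present setting. The new technical issue is that \(\calQ^\boldlambda_{\preleq \xi}\) does not inherit its abelian structure from \(\catO(\frakg)^\tau_{\preleq \xi}\), so kernels and cokernels of the relevant adjunction counit must be interpreted in the Serre-quotient abelian structure.

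First I would dispatch the two easy requirements: \(\pi_\xi\) is exact on \(\calQ^\boldlambda_{\preleq \xi}\) by the argument in the proof of \Cref{lem:15}, and it annihilates \(\calQ^\boldlambda_{\prel \xi}\) by weight considerations, since the block projection appearing at the end of~\eqref{eq:89} kills any module whose weights are all strictly smaller than \(\tilde\xi\) and none \(W_\frakl\)-linked to \(\tilde\xi\).

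For the factorization property, given an exact functor \(\calG\colon \calQ^\boldlambda_{\preleq \xi} \to \calA\) vanishing on \(\calQ^\boldlambda_{\prel \xi}\), I would set \(\bar\calG = \calG \circ \Delta_\xi\) and show \(\bar\calG \circ \pi_\xi \cong \calG\) by examining the counit \(\psi\colon \Delta_\xi \circ \pi_\xi \to \id\) from the adjunction of \Cref{lem:15}; the companion identity \(\pi_\xi \circ \Delta_\xi \cong \id\) is inherited from \Cref{lem:17}. Once one shows that both \(\ker \psi_M\) and \(\coker \psi_M\), computed in \(\calQ^\boldlambda_{\preleq \xi}\), lie in \(\calQ^\boldlambda_{\prel \xi}\), the desired isomorphism follows by applying the exact functor \(\calG\) to the induced four-term exact sequence, and uniqueness of \(\bar\calG\) up to natural isomorphism is then formal.

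The hard part is computing these kernels and cokernels in the non-inherited abelian structure. My approach is to perform the computation first in the ambient parabolic category: by \Cref{prop:4}, any \(M\in \calQ^\boldlambda_{\preleq \xi}\) is canonically an object of \(\catO(\frakg)^\tau_{\preleq \xi}\), and the parabolic analogue of \Cref{prop:21} indicated near~\eqref{eq:91} controls the kernel and cokernel of the corresponding counit there via the weight-space and PBW argument of that proof. Applying the exact coapproximation \(\sfC_\frakg\colon \catO(\frakg)^\tau \to \calQ^\boldlambda\), which maps \(\catO(\frakg)^\tau_{\prel \xi}\) into \(\calQ^\boldlambda_{\prel \xi}\) and commutes appropriately with \(\Delta_\xi\) and \(\pi_\xi\) (since those functors already take values in the subquotient categories by \Cref{prop:1} and \Cref{lem:15}), then transfers the conclusion to \(\calQ^\boldlambda_{\preleq \xi}\) and completes the argument.
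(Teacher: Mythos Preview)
Your approach matches the paper's exactly: verify the universal property by setting $\bar\calG = \calG \circ \Delta_\xi$, analyze the counit $\psi$ in the ambient category $\catO(\frakg)^\tau_{\preleq \xi}$ where the parabolic version of \Cref{prop:21} controls $\ker\psi_M$ and $\coker\psi_M$, and then push the resulting four-term sequence into $\calQ^\boldlambda$ via the exact coapproximation $\sfC_\frakg$, using that $\sfC_\frakg$ carries $\catO(\frakg)^\tau_{\prel\xi}$ into $\calQ^\boldlambda_{\prel\xi}$ (the paper isolates this last fact as \Cref{lem:5}).

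One small imprecision: the exactness of $\pi_\xi$ on $\calQ^\boldlambda_{\preleq\xi}$ is not contained in the proof of \Cref{lem:15}; that proof only \emph{uses} exactness of $\pi_\xi$ on $\catO(\frakg)^\tau_{\preleq\xi}$. The paper fills this in directly: $\pi_\xi$ is left exact as a right adjoint, and right exact because it factors as the right-exact inclusion $\calQ^\boldlambda_{\preleq\xi}\hookrightarrow \catO(\frakg)^\tau_{\preleq\xi}$ followed by the exact $\pi_\xi\colon \catO(\frakg)^\tau_{\preleq\xi}\to\catO(\frakl)^\tau_\xi$ and the exact coapproximation $\catO(\frakl)^\tau_\xi \to \calC^\boldlambda_\xi$.
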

\begin{proof}
We check that \((\calC^\boldlambda_\xi,\pi_\xi)\) satisfies the universal property of the Serre quotient category. The proof is analogous to the proof of \Cref{prop:21}, but again, we need to be a bit more careful since the abelian structure of our categories is not induced by the abelian structure on the category of \(\frakg\)--modules.  First, we observe that the functor \(\pi_\xi\colon \calQ^\boldlambda_{\preleq \xi} \mapto \calC^\boldlambda_\xi\) is exact. Indeed, since it is a right adjoint it is automatically left exact. On the other side, \(\pi_\xi\) is the composition of the following three functors: (i) the inclusion of \(\calQ^\boldlambda_{\preleq \xi}\) into \(\catO(\frakg)^\tau_{\preleq \xi}\), which is right exact, (ii) the functor \(\pi_\xi \colon \catO(\frakg)^\tau_{\preleq \xi}\mapto \catO(\frakl)^\tau_\xi\), which is exact, and (iii) the coapproximation functor \(\catO(\frakl)^\tau_\xi \mapto \calC^\boldlambda_\xi\), which is also exact. Hence \(\pi_\xi\) is right exact, too.\\
We prove now that \((\calC^\boldlambda_\xi, \pi_\xi)\) satisfies the universal property of the Serre quotient. Let \(\ocalC\) be any abelian category and let \(\calG\colon \calQ^\boldlambda_{\preleq \xi} \mapto \ocalC\) be an exact functor which vanishes on \(\calQ^{\boldlambda}_{\prel \xi}\). Define \(\bar \calG\colon \calC^\boldlambda_\xi \mapto \ocalC\) as \(\bar \calG= \calG \circ \Delta_\xi\). We need to show that \(\bar \calG \circ \pi_\xi \cong \calG\).\\
Consider the adjunction morphism \(\psi\colon \Delta_\xi \circ \pi_\xi \mapto \id\).
For each \(M \in \calQ^\boldlambda_{\preleq \xi}\) we have from \cref{eq:106} an exact sequence of \(\frakg\)--modules
\begin{equation}
  \label{eq:43}
  0 \mapto \ker \psi_M \mapto \Delta \circ \pi_\xi (M) \mapto M \mapto \coker \psi_M \mapto 0
\end{equation}
with both \(\ker \psi_M\) and \(\coker \psi_M\) in \(\catO(\frakg)_{\prel \xi}\). In general, we cannot say that \(\ker \psi_M\) and \(\coker \psi_M\) are objects of \(\calQ^\boldlambda\), but can apply the coapproximation functor \(\sfC\colon \catO(\frakg)^\tau \mapto \calQ^\boldlambda\) to \cref{eq:43} to obtain
\begin{equation}
  \label{eq:92}
  0 \mapto \sfC(\ker \psi_M) \mapto \Delta \circ \pi_\xi (X) \mapto X \mapto \sfC(\coker \psi_M) \mapto 0
\end{equation}
By \Cref{lem:5} below, \(\sfC(\ker \psi_M), \sfC(\coker \psi_M) \in \calQ^\boldlambda_{\prel \xi}\). Since \(\calG\) is exact and vanishes on \(\calQ^\boldlambda_{\prel \xi}\) we have \(\bar \calG\circ \pi_\xi \cong \calG\), and we are done.
\end{proof}

\begin{lemma}
  \label{lem:5}
  The functor \(\sfC\colon \catO(\frakg)^\tau\mapto \calQ^\boldlambda\) restricts to a functor \(\catO(\frakg)^\tau_{\prel \xi} \mapto \calQ^\boldlambda_{\prel\xi}\).
\end{lemma}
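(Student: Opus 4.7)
My plan is to exploit exactness of the coapproximation functor together with the fact that Serre subcategories are closed under subobjects, quotients, and extensions, so that the statement reduces to checking behavior on simple modules. The Serre quotient functor $\sfC\colon \catO(\frakg)^\tau \to \calQ^\boldlambda$ is exact: indeed, under the equivalence of \Cref{prop:4} identifying \(\calQ^\boldlambda\) with \(\rmod{eAe}\) for the idempotent \(e\) cutting out \(\calP^\boldlambda\) inside the endomorphism algebra \(A\) of a projective generator of \(\catO(\frakg)^\tau\), the functor \(\sfC\) is given by \(\Hom_A(eA,\blank)\), which is exact since \(eA\) is projective as a right \(A\)-module (cf.\ \cref{sec:serre-subc-serre-1}).

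Given exactness, the first step is to reduce to simples: every object \(M \in \catO(\frakg)^\tau_{\prel \xi}\) admits a finite composition series whose simple subquotients are of the form \(L(\bolda)\) with \(\tableau^\boldlambda(\bolda)\) column-strict and \(p(\bolda) \prel \xi\). Applying the exact functor \(\sfC\) yields a filtration of \(\sfC(M)\) whose subquotients are the \(\sfC(L(\bolda))\). Since \(\calQ^\boldlambda_{\prel \xi}\) is a Serre subcategory of \(\calQ^\boldlambda\), it suffices to show that each \(\sfC(L(\bolda))\) lies in \(\calQ^\boldlambda_{\prel \xi}\).

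Now the second step handles simples directly. By the description of the simples of \(\calQ^\boldlambda\) following \Cref{cor:2}, there are two possibilities. If \(\tableau^\boldlambda(\bolda)\) fails to be semi-standard, then \(P^\tau(\bolda) \notin \calP^\boldlambda\), so \(L(\bolda)\) lies in the kernel of the Serre quotient and \(\sfC(L(\bolda)) = 0 \in \calQ^\boldlambda_{\prel \xi}\). If instead \(\tableau^\boldlambda(\bolda)\) is semi-standard, then \(\sfC(L(\bolda)) = S^\boldlambda(\bolda)\), and the assumption \(p(\bolda) \prel \xi\) is exactly the condition defining \(\calQ^\boldlambda_{\prel \xi}\), so \(S^\boldlambda(\bolda) \in \calQ^\boldlambda_{\prel \xi}\) by definition.

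I do not expect a genuine obstacle here: the only subtlety is to make sure one uses exactness of \(\sfC\) (not merely right exactness of the inclusion \(\calQ^\boldlambda \hookrightarrow \catO(\frakg)^\tau\)), and to recall that the composition series argument is carried out inside \(\catO(\frakg)^\tau\), where the abelian structure is the standard one, before transporting to \(\calQ^\boldlambda\) via \(\sfC\). Once that is set up, the classification of simples of \(\calQ^\boldlambda\) via semi-standardness of the associated tableau finishes the proof in one line.
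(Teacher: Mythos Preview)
Your proof is correct and follows essentially the same approach as the paper: use exactness of \(\sfC\) to reduce to simple modules, then invoke the dichotomy \(\sfC(L(\bolda)) \cong S^\boldlambda(\bolda)\) or \(0\) according to whether \(\tableau^\boldlambda(\bolda)\) is semi-standard. The paper's proof is terser, but your added justification for exactness of \(\sfC\) and the explicit composition-series argument are fine elaborations of the same idea.
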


\begin{proof}
  We have to show that \(\sfC\) sends \(\catO(\frakg)^\tau_{\prel \xi} \) to \(\calQ^\boldlambda_{\prel\xi}\).
  Since \(\sfC\) is exact, it suffices to prove the claim for simple modules. This is however obvious, since 
  \begin{equation}
    \label{eq:93}
    \sfC(L(\bolda)) \cong
    \begin{cases}
      S^\boldlambda(\bolda) & \text{if \(\tableau^\boldlambda(\bolda)\) is semi-standard},\\
      0 & \text{otherwise.}
    \end{cases}
  \end{equation}\renewcommand{\qedsymbol}{}
\end{proof}

Altogether we obtain:
\begin{theorem}
  \label{thm:3}
  Let \(\boldlambda\) be a multipartition. Each block \(\calQ^\boldlambda_\boldd\) of the category \(\calQ^\boldlambda\) is  a standardly stratified category.
\end{theorem}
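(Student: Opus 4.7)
The plan is to collect the structural results of \Cref{sec:struct-stand-strat} and verify the conditions of \Cref{def:16} for each block \(\calQ^\boldlambda_\boldd\). First I would take as indexing poset \(\Xi = \Xi_\boldd\), equipped with the partial order \(\preleq\) coming from the dominance order on \(\{\tilde \xi \suchthat \xi \in \Xi\}\), and the surjection \(p\colon \St^\boldlambda(\boldd) \surto \Xi\), where \(\St^\boldlambda(\boldd)\) labels the simple objects \(S^\boldlambda(\bolda)\) of \(\calQ^\boldlambda_\boldd\). The Serre subcategories \(\calQ^\boldlambda_{\preleq \xi}\) and \(\calQ^\boldlambda_{\prel \xi}\) introduced in \Cref{sec:struct-stand-strat} then furnish the required filtration of \(\calQ^\boldlambda_\boldd\).

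Next, I would invoke \Cref{prop:6} to identify the Serre quotient \(\calQ^\boldlambda_{\sim \xi} = \calQ^\boldlambda_{\preleq \xi}/\calQ^\boldlambda_{\prel \xi}\) with \(\calC^\boldlambda_\xi\) via the quotient functor \(\pi_\xi\). By \Cref{lem:15}, the functor \(\pi_\xi\) admits the standardization functor \(\Delta_\xi\colon \calC^\boldlambda_\xi \mapto \calQ^\boldlambda_{\preleq \xi}\) as left adjoint, and \Cref{prop:1} guarantees that \(\Delta_\xi\) is exact. This supplies all the categorical data required by \Cref{def:16}.

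To conclude, it remains to verify the key filtration condition: for each \(\bolda\) with \(\tableau^\boldlambda(\bolda)\) semi-standard, there must exist an epimorphism \(P^\boldlambda(\bolda) \surto \Delta^\boldlambda(\bolda)\) whose kernel admits a filtration by standard modules \(\Delta^\boldlambda(\boldb)\) with \(p(\boldb) \preg p(\bolda)\). This is precisely the content of \Cref{prop:17}, so invoking it completes the verification.

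The real difficulty has already been handled in the preceding subsection: constructing the exact left adjoint \(\Delta_\xi\) (\Cref{prop:1} together with \Cref{lem:15}), identifying \(\calC^\boldlambda_\xi\) as the Serre quotient (\Cref{prop:6}, where careful attention is required because the abelian structures of \(\calQ^\boldlambda\), \(\catO(\frakg)^\tau\) and \(\calC^\boldlambda\) do not coincide), and establishing the Verma-style filtration of \Cref{prop:17}. With those three ingredients in place, the proof of the theorem itself is purely formal, amounting to little more than matching notation with the conditions of \Cref{def:16}.
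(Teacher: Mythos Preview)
Your proposal is correct and matches the paper's approach exactly: the paper itself offers no explicit proof beyond the phrase ``Altogether we obtain,'' signaling that the theorem is a formal consequence of the preceding results (\Cref{prop:6}, \Cref{lem:15}, \Cref{prop:1}, and \Cref{prop:17}) assembled against \Cref{def:16}, precisely as you outline.
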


\section{Categorical \texorpdfstring{\(\mathfrak{sl}_k\)--}{sl(k)-}action}
\label{sec:acti-degen-affine}

We recall in this section the categorical \(\fraksl_k\)--action on \(\catO(\gl_\enne)\).

\subsection{The degenerate affine Hecke algebra}
\label{sec:degen-affine-hecke}

First, we recall the definition of the degenerate affine Hecke algebra.

\begin{definition}[\cite{MR831053}]
  \label{def:11}
  The \emph{degenerate affine Hecke algebra} \(\ucalH^\aff_r\) is the unital \(\C\)--algebra on generators \(x_1,\dotsc,x_r\) and \(t_1,\dotsc,t_{r-1}\) subject to the following relations:
  \begin{enumerate}[(a)]
  \item \label{item:9}  \(x_i \mapsto x_i\) defines an inclusion of the polynomial ring \(\C[x_1,\dotsc,x_n]\) into \(\ucalH^\aff_r\);
  \item \label{item:10} \(s_i \mapsto t_i\) defines an inclusion of the  group algebra \(\C[\bbS_n]\) of the symmetric group into \(\ucalH^\aff_r\);
  \item \label{item:11} finally, the following commutation relations hold:
    \begin{eqnarray}
      &t_j x_i -x_i t_j = 0, \qquad \text{if } \abs{i-j}>1, &\label{eq:52}\\
     & t_j x_j - x_{j+1} t_j  = 1, \qquad t_j x_{j+1} - x_j t_j = -1.&\label{eq:51}
    \end{eqnarray}
  \end{enumerate}
\end{definition}

Let \(\catO=\catO(\gl_\enne)\) and \(M \in \catO(\gl_\enne)\). There is a well-known action of  \(\ucalH^\aff_r\) on \(M \otimes (\C^\enne)^{\otimes r}\), which we recall briefly.
Let \(X_{bc} \in \gl_\enne\) for \(b,c=1,\dotsc,n\) be the matrix units. Let
\begin{equation}
  \label{eq:6}
  \Omega = \sum_{b,c=1}^\enne X_{bc} \otimes X_{cb}\quad\in \quad U(\gl_\enne) \otimes U(\gl_\enne),
\end{equation}
be the {\em Casimir operator} 
and \(C=m(\Omega)\) the {\em Casimir element} of $U(\gl_\enne)$, where
$m\colon U(\gl_\enne) \otimes U(\gl_\enne) \to U(\gl_\enne) $ is the multiplication.
Define for $0 \leq h < l \leq r$
\begin{equation}
  \label{eq:17}
  \Omega_{hl} = \sum_{b,c=1}^\enne 1 \otimes \cdots \otimes 1 \otimes X_{bc} \otimes 1 \otimes \cdots \otimes 1 \otimes X_{cb} \otimes 1 \otimes \cdots \otimes 1,
\end{equation}
where $X_{bc}$ resp. $X_{cb}$ are the \(h\)--th and $l$--th tensor factor, starting with position $0$. 

Let \(\sigma\colon \C^\enne \otimes \C^\enne \mapto \C^\enne \otimes \C^\enne\) be the map \(v \otimes w \mapsto w \otimes v\). Then we obtain the following

\begin{prop}[\cite{MR1652134}]
\label{prop:10}
  For any \(M \in \catO\), the assignments
  \begin{equation}
    \label{eq:53}
    t_h \mapsto \id \otimes \id^{\otimes(h-1)} \otimes
    \sigma \otimes
    \id^{\otimes(r-h-1)}, \qquad
    x_h  \mapsto \textstyle\sum_{0 \leq l < h} \Omega_{lh}
  \end{equation}
  define an algebra homomorphism \(\Psi_{M,r} \colon \ucalH^\aff_r
  \mapto \End_\catO(M \otimes (\C^\enne)^{\otimes r})\). This map is natural in \(M\), i.e.\ if \(M' \in \catO\) and \(f \in \Hom_\catO(M,M')\) then 
  \begin{equation}
\Psi_{M',r}(z) \circ (f \otimes \id^{\otimes r}) = (f \otimes \id^{\otimes r}) \circ\Psi_{M,r}(z)\label{eq:55}
\end{equation}
 for all \(z \in \ucalH^\aff_r\).  \label{prop:7}
\end{prop}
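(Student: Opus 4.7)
The plan is to verify the defining relations of $\ucalH^\aff_r$ generator-by-generator and then establish the naturality in $M$. I would organize the work into three blocks: the symmetric-group relations among the $t_h$, the mixed commutation relations involving one $t_j$ and one $x_i$, and the polynomial subalgebra relations $[x_i,x_j]=0$.

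The symmetric-group relations are immediate: since $\sigma^2=\id$ on $\C^\enne\otimes\C^\enne$ and adjacent swaps satisfy Coxeter braid relations in $\End((\C^\enne)^{\otimes r})$, the $t_h$ defined by \cref{eq:53} automatically realize $\C[\bbS_r]$. For the mixed relations $t_j x_i = x_i t_j$ with $\abs{i-j}>1$, I would compute $t_j\Omega_{lh}t_j^{-1}=\Omega_{s_j(l),s_j(h)}$ directly from \eqref{eq:17}, where $s_j$ is the transposition $(j\,j{+}1)$; when $j+1<i$ or $j>i$, the resulting permutation of indices is a bijection of $\{0,\dots,i-1\}$, so the sum defining $x_i$ is preserved. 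For the nontrivial interaction $t_j x_j - x_{j+1}t_j$, the identity $t_j\Omega_{lj}=\Omega_{l,j+1}t_j$ for $l<j$ causes all $l<j$ terms in $t_jx_j$ and in $x_{j+1}t_j$ to cancel telescopically, leaving only the boundary term $\Omega_{j,j+1}t_j$. This reduces the relation to the key local identity: a direct matrix-unit calculation shows $\Omega$ acts on $\C^\enne\otimes\C^\enne$ as $\sigma$, so $\Omega_{j,j+1}t_j$ is a scalar on positions $j,j+1$, yielding the required $\pm 1$. The second interaction relation is analogous.

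The heart of the argument is the commutativity $[x_i,x_j]=0$, which I expect to be the main obstacle because it cannot be reduced to a purely two-position calculation. Expanding $[x_i,x_j]=\sum_{l<i,\ l'<j}[\Omega_{li},\Omega_{l'j}]$ for $i<j$, the commutators with $\{l,i\}\cap\{l',j\}=\emptyset$ vanish, leaving only the contributions with $l'=i$ or $l'=l$. I would then invoke the infinitesimal braid (Kohno) relations $[\Omega_{ab},\Omega_{ac}+\Omega_{bc}]=0$ for distinct $a,b,c$, which themselves follow from the observation that $\Omega_{ab}+\Omega_{ac}+\Omega_{bc}$ is essentially the three-fold Casimir on factors $a,b,c$ and therefore central under the diagonal action of $\gl_\enne$. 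Specialising to $(a,b,c)=(l,i,j)$ converts $[\Omega_{li},\Omega_{lj}]$ into $-[\Omega_{li},\Omega_{ij}]$, so the two surviving families of terms cancel and commutativity follows.

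Finally, for naturality in $M$, given $f\in\Hom_\catO(M,M')$ I would check that $f\otimes\id^{\otimes r}$ commutes with each generator. For the $t_h$ this is trivial since the $M$-factor is untouched. For $x_h$, the summands $\Omega_{lh}$ with $l\geq 1$ involve only the $\C^\enne$-factors and hence commute with $f\otimes\id^{\otimes r}$ tautologically; for $\Omega_{0h}$, the $M$-factor is acted on by $X_{bc}\in U(\gl_\enne)$, and $\gl_\enne$-linearity of $f$ gives the intertwining. Combining these yields \cref{eq:55} for all $z\in\ucalH^\aff_r$ by linearity and the generator-by-generator check.
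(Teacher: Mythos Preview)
Your proof outline is correct and is essentially the standard direct verification (as in the cited Arakawa--Suzuki paper). Note, however, that the paper itself does not give a proof of this proposition at all: it is stated with a citation to \cite{MR1652134} and used as a black box. So there is no ``paper's own proof'' to compare against here, and your argument simply supplies what the paper omits.

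One point to sharpen: you write that the local calculation yields ``the required $\pm 1$'', but if you carry it out you find $t_j x_j - x_{j+1} t_j = -\Omega_{j,j+1}t_j = -t_j^2 = -1$, whereas the paper's relation \eqref{eq:51} reads $+1$. This is a sign-convention discrepancy in the paper's presentation of $\ucalH^\aff_r$ (or equivalently in the choice of $\Omega$ versus $-\Omega$), not an error in your argument; but you should flag it rather than hide it behind ``$\pm 1$''. Otherwise the verification of the infinitesimal braid relations for $[x_i,x_j]=0$ and the naturality argument are exactly right.
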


In particular, if 
\begin{equation}
  \label{eq:57}
    \sfF \colon \catO \mapto \catO, \quad
    M  \mapsto M \otimes \C^\enne
\end{equation}
denotes the standard translation functor, then we have:

\begin{corollary}
  \label{cor:1}
  The maps \(\Psi_{\blank,r}\) define a  homomorphism of algebras
  \begin{equation}
    \label{eq:56}
    \Psi_r \colon \ucalH^\aff_r \mapto \End( \sfF^r).
  \end{equation}
\end{corollary}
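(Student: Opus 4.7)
The proof is a direct packaging of \Cref{prop:7}, and there is essentially no obstacle. First I would define $\Psi_r(z)$, for a given $z \in \ucalH^\aff_r$, to be the collection of endomorphisms whose component at $M \in \catO$ is $\Psi_{M,r}(z) \in \End_\catO(\sfF^r(M))$, using that $\sfF^r(M) = M \otimes (\C^\enne)^{\otimes r}$ by definition of \cref{eq:57}. The only things to check are that this defines a natural transformation $\sfF^r \Rightarrow \sfF^r$ (so that it actually lies in $\End(\sfF^r)$) and that the assignment $z \mapsto \Psi_r(z)$ respects the algebra structure.

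For naturality: for any morphism $f \colon M \mapto M'$ in $\catO$, the functor $\sfF^r$ acts by $\sfF^r(f) = f \otimes \id^{\otimes r}$, so the naturality square for $\Psi_r(z)$ at the morphism $f$ is precisely the identity \cref{eq:55} of \Cref{prop:7}, namely
\begin{equation*}
\Psi_{M',r}(z) \circ \sfF^r(f) = \sfF^r(f) \circ \Psi_{M,r}(z).
\end{equation*}
Hence $\Psi_r(z)$ is a natural endotransformation of $\sfF^r$, i.e.\ an element of $\End(\sfF^r)$.

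For the algebra-homomorphism property: sums, scalar multiples, and composites of natural transformations are all defined componentwise at each $M \in \catO$, and \Cref{prop:7} tells us that for each $M$ the map $\Psi_{M,r}\colon \ucalH^\aff_r \mapto \End_\catO(\sfF^r(M))$ is already an algebra homomorphism. Assembling componentwise then immediately yields that $\Psi_r$ is an algebra homomorphism $\ucalH^\aff_r \mapto \End(\sfF^r)$.

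In short, the corollary is simply the restatement of \Cref{prop:7} in functorial language, with the entire mathematical content sitting in the naturality clause \cref{eq:55}. There is no step that is technically hard; the only point worth flagging explicitly is that $\End(\sfF^r)$ must be read as the algebra of natural endotransformations of the functor $\sfF^r$, which is precisely the structure that \cref{eq:55} is tailor-made to produce.
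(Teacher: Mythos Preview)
Your proposal is correct and matches the paper's approach exactly: the paper states this as an immediate corollary of \Cref{prop:7} with no separate proof, and what you have written is precisely the unpacking of why it follows---naturality from \cref{eq:55} and the algebra-homomorphism property from the componentwise statement in \Cref{prop:7}.
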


We also define the functor
\begin{equation}
  \label{eq:58}
    \sfE \colon \catO \mapto \catO, \quad
    M  \mapto M \otimes (\C^\enne)^*.
\end{equation}
Note that \(\sfE\) and \(\sfF\) are biadjoint. We recall the following standard result, which is a direct consequence of the tensor identity (see \cite[Prop.~6.5]{MR938524}).
\begin{lemma}
  \label{lem:10}
  In the Grothendieck group \([\catO]\) we have
  \begin{equation}
    \label{eq:66}
    [\sfF M(\bolda)]  = \sum_{l=1}^\enne [M(\bolda + \epsilon_l)],\qquad
    [\sfE M(\bolda)]  = \sum_{l=1}^\enne [M(\bolda - \epsilon_l)].
  \end{equation}
\end{lemma}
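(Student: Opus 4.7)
The plan is to invoke the tensor identity (projection formula) for parabolic induction. Writing $\frakg = \gl_\enne$ and $M(\bolda) = U(\frakg) \otimes_{U(\frakb)} \C_{\bolda - \rhogl}$, where $\C_\mu$ denotes the one-dimensional $\frakb$-module on which $\frakh$ acts by $\mu$ and $\frakn^+$ acts by zero, the tensor identity provides, for any finite-dimensional $\frakg$-module $V$, a natural $\frakg$-module isomorphism
\begin{equation*}
  M(\bolda) \otimes V \;\cong\; U(\frakg) \otimes_{U(\frakb)} \bigl(\C_{\bolda - \rhogl} \otimes \res^{\frakg}_{\frakb} V\bigr).
\end{equation*}
By the PBW theorem, $U(\frakg)$ is free as a right $U(\frakb)$-module, so the functor $U(\frakg) \otimes_{U(\frakb)} (-)$ is exact on $\frakb$-modules.

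First, I would establish a $\frakb$-stable filtration on $\res^{\frakg}_{\frakb} \C^\enne$ whose successive quotients are one-dimensional weight modules. Ordering the standard basis $v_1,\dots,v_\enne$ so that $X_{ij}v_k = \delta_{jk}v_i$, the strictly upper-triangular subalgebra $\frakn^+$ decreases indices, hence $V_l := \vectorspan(v_1,\dots,v_l)$ defines a $\frakb$-filtration $0=V_0 \subset V_1 \subset \dotsb \subset V_\enne = \C^\enne$ with $V_l / V_{l-1} \cong \C_{\epsilon_l}$. Tensoring by $\C_{\bolda - \rhogl}$ and applying the exact functor $U(\frakg) \otimes_{U(\frakb)} (-)$ yields a filtration of $\sfF M(\bolda) = M(\bolda) \otimes \C^\enne$ whose $l$-th successive subquotient is
\begin{equation*}
  U(\frakg) \otimes_{U(\frakb)} \C_{(\bolda + \epsilon_l) - \rhogl} \;=\; M(\bolda + \epsilon_l).
\end{equation*}
Passing to the Grothendieck group gives the first identity.

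The second identity is identical in spirit: the dual basis $v_1^*,\dots,v_\enne^*$ of $(\C^\enne)^*$ satisfies $X_{ij} \cdot v_l^* = -\delta_{il}\, v_j^*$, so $\frakn^+$ now increases indices and the $\frakb$-stable filtration is $V^*_l := \vectorspan(v_\enne^*, v_{\enne-1}^*, \dots, v_{\enne - l + 1}^*)$, with successive quotients $\C_{-\epsilon_{\enne - l + 1}}$. The same argument produces a filtration of $\sfE M(\bolda)$ by Verma modules $M(\bolda - \epsilon_l)$, and the claim follows in $[\catO]$.

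There is no genuine obstacle here; this is a routine consequence of the tensor identity plus the PBW freeness of $U(\frakg)$ over $U(\frakb)$, combined with the elementary observation that the natural representation and its dual admit Borel-stable composition series whose weights are exactly $\{\epsilon_l\}$ and $\{-\epsilon_l\}$ respectively. The only point to be mildly careful about is the sign conventions inherited from the shift by $\rhogl$ in the parametrization $M(\bolda)$, but this cancels on both sides of each equation.
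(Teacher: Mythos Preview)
Your proof is correct and follows exactly the approach indicated in the paper: the paper does not spell out a proof but simply states that the lemma ``is a direct consequence of the tensor identity'' with a reference to Knapp. You have supplied precisely those details---the tensor identity, the exactness of induction via PBW, and the Borel-stable flag on $\C^\enne$ and its dual---so there is nothing to add.
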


\subsection{Combinatorics of weights}
\label{sec:comb-weights}

Recall that \(I=\{1,\dotsc,k\}\). The set \(I^\enne\) can be identified with a subset of the weights \(\weightsgl = \Z^n\) of \(\gl_\enne\). The weight from \(I^\enne\) are called \(k\)--\emph{bounded}. We define a map \(\phi\) from \(I^\enne\) to the set of weights of \(\mathfrak{sl}_k\) by 
\begin{equation}
\phi\colon \bolda=(a_1,\dotsc,a_\enne) \longmapsto \updelta_{a_1} + \dotsb + \updelta_{a_\enne}.\label{eq:95}
\end{equation}
 Notice that \(\phi\) is constant on the orbits of the action of the symmetric group \(W=\bbS_n\) on \(I^\enne\). In particular,  \(\phi^{-1}(\phi(\bolda))=W \bolda\) (this statement is a bit less obvious than it seems, since \(\updelta_{a_1}+\dotsb+\updelta_{a_n}\) is a weight for \(\mathfrak{sl}_k\) and not for \(\gl_k\)). We state the following result, whose easy proof follows directly from the definition:
\begin{lemma}
  \label{lem:9}
  Let \(\bolda \in I^\enne\) and  pick \(1 \leq l \leq \enne\). Set \(i=(\bolda,\epsilon_l)\). If  \(i<k\) then \(\phi(\bolda+ \epsilon_l) = \phi(\bolda) - \alpha_i\).
\end{lemma}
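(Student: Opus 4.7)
The plan is to verify the claim by a direct unpacking of the definitions, as this lemma really only asserts that the two natural combinatorial operations on weights (adding $\epsilon_l$ on the $\gl_\enne$ side, and subtracting $\alpha_i$ on the $\fraksl_k$ side) are intertwined by $\phi$.

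First I would unpack the hypothesis: writing $\bolda=(a_1,\dotsc,a_\enne)\in I^\enne$, the pairing $i=(\bolda,\epsilon_l)$ is simply the $l$-th entry, so $i=a_l\in I$. The assumption $i<k$ together with $i\geq 1$ then guarantees $i+1\in I$, so that $\bolda+\epsilon_l=(a_1,\dotsc,a_{l-1},i+1,a_{l+1},\dotsc,a_\enne)$ still lies in $I^\enne$ and the simple root $\alpha_i = \updelta_i - \updelta_{i+1}$ is actually defined (recall the simple roots are indexed by $1,\dotsc,k-1$).

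Next I would apply the definition \eqref{eq:95} of $\phi$ separately to $\bolda$ and $\bolda+\epsilon_l$, obtaining
\begin{equation*}
\phi(\bolda)=\sum_{j\neq l}\updelta_{a_j}+\updelta_i,\qquad \phi(\bolda+\epsilon_l)=\sum_{j\neq l}\updelta_{a_j}+\updelta_{i+1}.
\end{equation*}
Subtracting gives $\phi(\bolda+\epsilon_l)-\phi(\bolda)=\updelta_{i+1}-\updelta_i=-\alpha_i$, which is the claim.

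There is no genuine obstacle here; the only point worth flagging is the role of the hypothesis $i<k$, which is not just a well-definedness condition for $\bolda+\epsilon_l$ as a $k$-bounded weight but also ensures that $\alpha_i$ is a simple root of $\fraksl_k$ in the sense of \textsection\ref{sec:repr-theory-fraksl_k}. If one dropped this assumption, $\phi(\bolda+\epsilon_l)$ would still be defined in some larger weight lattice but could not be expressed in the form $\phi(\bolda)-\alpha_i$ for a simple root $\alpha_i$.
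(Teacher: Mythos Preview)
Your proof is correct and is exactly the direct verification from the definitions that the paper has in mind; indeed, the paper does not spell out a proof at all but simply states that the result ``follows directly from the definition''. Your computation and your remark on the role of the hypothesis \(i<k\) are both accurate.
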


 For \(\boldd\in I^n\) dominant we let \((+i)\boldd\) denote the unique dominant weight with \(\phi((+i)\boldd)=\phi(\boldd)-\alpha_i\). Moreover, we let \((-i)\boldd\) denote the unique dominant weight with \(\phi((-i)\boldd)=\phi(\boldd)+\alpha_i\). Of course such weights do not always exist. If \((+i)\boldd\) is not defined then we just set \(\catO_{(+i)\boldd}=0\), and similarly for \((-i)\boldd\).
We denote by \(\catO_I\) the sum of all the blocks \(\catO_\boldd\) with \(\boldd \in I^\enne\) a dominant weight. 

Let us define \(\sfF_i \colon \catO_{\boldd} \mapto \catO_{(+i)\boldd}\) by \(\sfF_i = \pr_{(+i)\boldd} \circ \sfF\), where \(\pr_{(+i)\boldd}\colon \catO \mapto \catO_{(+i)\boldd}\) is the projection. Analogously, let us define \(\sfE_i \colon \catO_{\boldd} \mapto \catO_{(-i)\boldd}\) by \(\sfE_i = \pr_{(-i)\boldd} \circ \sfE\). We denote also by \(\sfF_i\) and \(\sfE_i\) the functors 
\begin{equation}
\sfF_i = \bigoplus_{\boldd \in I} \sfF_i\colon \catO_I \longrightarrow \catO_I, \qquad \sfE_i = \bigoplus_{\boldd \in I} \sfE_i \colon \catO_{I} \longrightarrow \catO_{I}.\label{eq:96}
\end{equation}

The following result appeared already at several places in the literature (see \cite[\textsection{}7.4]{MR2373155}, \cite[\textsection{}4.4]{MR2456464}). For convenience we give a complete proof in our setup.

\begin{lemma}
  \label{lem:7}
  The subfunctor \(\sfF_i\)
is the generalized \(i\)--eigenspace of \(x\) acting on \(\sfF\). 
That is, for all \(M \in \catO_I\) we have
\begin{equation}
  \label{eq:46}
  \sfF_i M = \{ v \in (M \otimes \C^\enne) \suchthat (\Omega-i)^N v = 0 \text{ for some } N \gg 0\}.
\end{equation}
\end{lemma}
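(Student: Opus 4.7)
The idea is to identify the action of $x=x_1$ on $\sfF M=M\otimes\C^\enne$ with a linear combination of Casimir operators, and then read off the resulting scalars from central characters.

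Recall that the quadratic Casimir $C=\sum_{b,c}X_{bc}X_{cb}\in U(\gl_\enne)$ is central, and a direct expansion of its coproduct yields $\Delta(C)=C\otimes 1+1\otimes C+2\Omega$, where $\Omega$ is the element from \cref{eq:6}. Applied to the tensor product $M\otimes\C^\enne$, this gives
\[
x \;=\; \Omega_{01} \;=\; \tfrac{1}{2}\bigl(C_{M\otimes\C^\enne}-C_M\otimes 1-1\otimes C_{\C^\enne}\bigr).
\]
Since $C$ is central, it acts on every object of $\catO_\bolde$ with the scalar $\chi(\bolde)=(\bolde-\rhogl,\bolde-\rhogl+2\rho)$ as its only generalized eigenvalue, where $\rho$ denotes the standard half-sum of positive roots of $\gl_\enne$ (which differs from $\rhogl$ only by a central weight, so the formula is unambiguous). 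Hence on each summand $\pr_\bolde(M\otimes\C^\enne)$ the operator $\Omega_{01}$ has as its only generalized eigenvalue the scalar $\tfrac{1}{2}\bigl(\chi(\bolde)-\chi(\boldd)-\chi_V\bigr)$, where $\chi_V=(\epsilon_1,\epsilon_1+2\rho)$ is the scalar by which $C$ acts on $\C^\enne$.

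The main computation is to evaluate this scalar when $\bolde=(+i)\boldd$. Since central characters are $W$-invariant under the dot action, one has $\chi((+i)\boldd)=\chi(\boldd+\epsilon_l)$ for any $l$ with $d_l=i$. Expanding bilinearly gives
\[
\chi(\boldd+\epsilon_l)-\chi(\boldd) \;=\; 2(\boldd-\rhogl,\epsilon_l)+(\epsilon_l,\epsilon_l)+2(\epsilon_l,\rho) \;=\; 2i+n,
\]
and a similar direct computation shows $\chi_V=1+2\rho_1=n$. Subtracting and halving yields the scalar $i$, as desired.

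Finally, since the scalars $i$ attained on distinct blocks $\pr_{(+i)\boldd}(M\otimes\C^\enne)$ are pairwise distinct, the generalized $i$-eigenspace of $\Omega_{01}$ on $M\otimes\C^\enne$ coincides precisely with $\pr_{(+i)\boldd}(M\otimes\C^\enne)=\sfF_i M$. The only real obstacle is the bookkeeping inside the Casimir calculation: the final scalar collapses to the clean value $i$, but to see this cancellation one must carefully track the contributions of the two $\rho$-shifts, the constant $n$ coming from the vector representation, and the $W$-action relating $\boldd+\epsilon_l$ to its dominant representative $(+i)\boldd$ (which the central character is nonetheless invariant under).
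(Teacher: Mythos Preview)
Your argument is correct and follows essentially the same route as the paper: both use the identity $\Omega=\tfrac{1}{2}(\Delta(C)-C\otimes 1-1\otimes C)$ and compute the resulting scalar via the Casimir eigenvalue formula, obtaining $(\bolda,\epsilon_l)=a_l$ on the piece going to the block of $\bolda+\epsilon_l$. The only organizational difference is that the paper first reduces to Verma modules (using that projectives have Verma filtrations) and then reads off the eigenvalue on each subquotient $M(\bolda+\epsilon_l)$, whereas you bypass this reduction by invoking directly that $C$ acts by a fixed scalar on each block; your route is marginally cleaner, but the computations are identical. One small remark: $C$ genuinely acts as a scalar on each block (not merely with a single generalized eigenvalue), so your phrasing ``as its only generalized eigenvalue'' is a slight understatement.
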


\begin{proof}
The action of \(x\) on \(\sfF M\) is the action of the Casimir operator \(\Omega\) on \(M \otimes \C^\enne\). Since \(\catO\) has enough projective modules, and since each projective module has a Verma filtration, it is enough to consider the case \(M=M(\bolda)\).  We have
  \begin{equation}
    \label{eq:59}
    \Omega = \frac{1}{2} \big( \Delta(C) - C \otimes 1 - 1 \otimes C\big).
  \end{equation}
  The action of \(C\) on the Verma module \(M(\bolda)\) of highest weight \(\bolda - \rhogl\) is given by \(( \bolda - \rhogl, \bolda+\rhogl )\) (cf.\ \cite[Exercise~23.3.4]{MR499562}), hence the generalized eigenvalues of \(\Omega\) acting on \(M(\bolda) \otimes \C^\enne\) are
 \begin{equation}
   \label{eq:60}
   \frac{1}{2} \big(( \bolda + \epsilon_l-\rhogl,\bolda+ \epsilon_l+\rhogl) - ( \bolda-\rhogl, \bolda+\rhogl) - ( \epsilon_1,\epsilon_1+2\rhogl) \big) = ( \bolda, \epsilon_l )
 \end{equation}
 where \(l=1,\dotsc,\enne\). Now, given \(M(\bolda) \in \catO_{\boldd}\), by \Cref{lem:9} we have \(M(\bolda+\epsilon_l) \in \catO_{(+i)\boldd} \) if and only if \(( \bolda, \epsilon_l)=i\). The claim follows.
\end{proof}

\subsection{Categorical \texorpdfstring{$\fraksl_k$--}{sl(k)-}actions}
\label{sec:fraksl_k-categ}

We recall the definition of an \(\fraksl_k\)--categorification: 

\begin{definition}[{\cite[Definition~2.6]{2013arXiv1310.0349B}, cf.\ also \cite[Definition~5.29]{2008arXiv0812.5023R}}]
  \label{def:12}
  An \emph{\(\mathfrak{sl}_k\)--categorification} is a Schurian category \(\calA\) together with a pair of adjoint endofunctors \((\sfF,\sfE)\) and natural transformations \(x \in \End(\sfF)\), \(t \in \End(\sfF^2)\) such that:
  \begin{enumerate}[label=(SL\arabic*),leftmargin=*]
  \item \label{item:12} We have \(\sfF= \bigoplus_{i=1}^{k-1} \sfF_i\), where \(\sfF_i\) is the generalized \(i\)--eigenspace of \(x\).
  \item \label{item:13} For all \(d \geq 0\) the endomorphisms \(x_j=\sfF^{d-j} x \sfF^{j-1}\) and \(t_k = \sfF^{d-k-1} t \sfF^{k-1}\) of \(\sfF^d\) satisfy the relations of the degenerate affine Hecke algebra.
  \item \label{item:14} The functor \(\sfF\) is isomorphic to a right adjoint of \(\sfE\).
  \item \label{item:15} The endomorphisms \(f_i\) and \(e_i\) of \([\calA]\) induced by \(\sfF_i\) and \(\sfE_i\), respectively, turn \([\calA]\) into an integrable representation of \(\mathfrak{sl}_k\). Moreover, the classes of the indecomposable projective objects are weight vectors.
  \end{enumerate}
\end{definition}

If the Grothendieck group \([\calA]\) is isomorphic, as an \(\fraksl_k\)--representation, to \(V\), we say also that \(\calA\) is an \(\fraksl_k\)--\emph{categorification} of \(V\). If \(V= \bigoplus_{\nu \in \weightssl} V_\nu\) is the weight decomposition of \(V\), then by \cite{2008arXiv0812.5023R} \(\calA\) decomposes as \(\calA = \bigoplus_{\nu \in \weightssl} \calA_\nu\) where \(\calA_\nu = \{ M \in \calA \suchthat [M] \in V_\nu\}\). We will also say that \(\calA_\nu\) is the weight \(\nu\) subcategory.

\begin{prop}
  \label{prop:9}
  The data of the two exact functors \(\sfE, \sfF\) on \(\catO_{I}\) and of the natural transformations \(x \in \End(\sfF)\) and \(t \in \End(\sfF^2)\) define an \(\mathfrak{sl}_k\)--categorification of \(V^{\otimes \enne}\).
\end{prop}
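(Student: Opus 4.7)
The plan is to verify the four axioms (SL1)--(SL4) of \Cref{def:12} for the data $(\catO_I,\sfF,\sfE,x,t)$ assembled in \Cref{sec:acti-degen-affine}.

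For (SL1), I would invoke \Cref{lem:7}: by construction the generalized $i$--eigenspace of $x$ acting on $\sfF$ is exactly the summand $\sfF_i$ of \cref{eq:96}. The only subtlety is that although $\Omega$ has eigenvalues in $\{1,\dotsc,k\}$ on $M \otimes \C^\enne$ for $M \in \catO_I$, the eigenvalue $i=k$ corresponds to shifting some weight entry $a_l=k$ to $k+1 \notin I$; this summand is killed by the projection $\pr_{\catO_I}$ implicit in the definition of $\sfF$ on $\catO_I$, so only $i=1,\dotsc,k-1$ survive. For (SL2), I would cite \Cref{cor:1}, which packages \Cref{prop:7} into an algebra map $\ucalH^\aff_r \to \End(\sfF^r)$; the natural transformations $\Omega_{hl}$ and swaps commute with the diagonal $U(\gl_\enne)$--action and thus preserve the block decomposition, so the DAHA action descends to the endofunctor $\sfF^r$ of $\catO_I$. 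For (SL3), the $\gl_\enne$--modules $\C^\enne$ and $(\C^\enne)^*$ are finite-dimensional duals, giving biadjointness of $\sfF$ and $\sfE$ on all of $\catO$; since $\catO_I$ is a direct sum of blocks of $\catO$, the biadjunction restricts.

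The substantive content is (SL4): identifying $[\catO_I]$ with $V^{\otimes \enne}$ as an $\fraksl_k$--representation. I would use the map
\begin{equation*}
[M(\bolda)] \longmapsto v_{a_1} \otimes \cdots \otimes v_{a_\enne}, \qquad \bolda \in I^\enne,
\end{equation*}
which is a bijection of bases: the blocks of $\catO_I$ are indexed by dominant weights $\boldd \in I^\enne$ and the Verma classes in $\catO_\boldd$ are indexed by $W\boldd$, so altogether the Verma classes are parametrized by $I^\enne$. By \Cref{lem:10} combined with \Cref{lem:9},
\begin{equation*}
[\sfF_i M(\bolda)] = \sum_{\ell : a_\ell = i} [M(\bolda + \epsilon_\ell)],
\end{equation*}
which matches the action of $\sum_\ell F_i^{(\ell)}$ on $v_{a_1} \otimes \cdots \otimes v_{a_\enne}$ read off from \Cref{ex:3}; an identical computation handles $\sfE_i$. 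Integrability is automatic because $V^{\otimes \enne}$ is finite-dimensional. Finally, every $[P(\bolda)]$ is a weight vector: since $P(\bolda)$ lies in a single block $\catO_\boldd$, its class expands into $[M(\bolda')]$ with $\bolda' \in W\boldd \cap I^\enne$, all sharing the $W$--invariant weight $\phi(\boldd)$.

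The main obstacle is the bookkeeping in (SL4): matching the block decomposition of $\catO_I$ with the $\fraksl_k$--weight decomposition of $V^{\otimes \enne}$ (two blocks agree iff their dominant representatives are $W$--conjugate, iff they have the same $\phi$--weight), and checking that the Chevalley generators act correctly on Verma classes. Once this dictionary is laid out, no further deep input is required beyond \Cref{prop:7}, \Cref{lem:7}, \Cref{lem:9}, and \Cref{lem:10}.
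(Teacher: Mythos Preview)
Your proposal is correct and follows essentially the same route as the paper: verify (SL1)--(SL4) by invoking \Cref{lem:7} for the eigenspace decomposition, \Cref{prop:7}/\Cref{cor:1} for the Hecke action, biadjointness for (SL3), and \Cref{lem:10} together with the Verma-to-standard-basis dictionary for (SL4). You spell out a few points (the eigenvalue $k$ being killed upon restricting to $\catO_I$, the explicit isomorphism $[M(\bolda)]\mapsto v_{a_1}\otimes\dotsb\otimes v_{a_\enne}$, integrability) that the paper leaves implicit, but the argument is the same.
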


\begin{proof}
  We need to check the conditions \crefrange{item:12}{item:15} above.
  Condition \ref{item:12} follows directly from \Cref{lem:7}. The action on \(\sfF^n\) of \(x_j\) and \(t_k\) induces an action of the degenerate affine Hecke algebra by \Cref{prop:7}, hence we have \ref{item:13}. The pair of functors \(\sfE\) and \(\sfF\) are biadjoint, hence \ref{item:14} is also true. From \Cref{lem:10} it follows that
 the endomorphisms \(f_i\) and \(e_i\) induced by \(\sfF_i\) and \(\sfE_i\), respectively, make \([\catO_{I}]\) into a representation of \(\mathfrak{sl}_k\) isomorphic to \(V^{\otimes \enne}\). Any projective module \(P(\bolda)\) has a filtration with Verma modules \(M(w\bolda)\) for \(w \in \bbS_\enne\). Since the isomorphism \([\catO_{I}] \mapto V^{\otimes \enne}\) sends all these Verma modules to standard basis vectors in the same weight space, it follows that \([P(\bolda)]\) is a weight vector, granting \ref{item:15}.
\end{proof}

We need the notion of an \(\mathfrak{sl}_k^{\oplus r}\)--categorification,  (cf.\ \cite{2008arXiv0812.5023R}, \cite{2013arXiv1303.1336L}):

\begin{definition}
  An \(\fraksl_k^{\oplus m}\)--categorification is a category \(\calA\)  which has \(m\) structures of an
  \(\mathfrak{sl}_k\)--categorification with functors
  \(\prescript{}{j}{\sfF}\), \(\prescript{}{j}{\sfE}\), for
  \(j=1,\dotsc,m\). These structures commute with each other, in the
  sense that we have natural isomorphisms
  \(\prescript{}{j}{\sfF} \prescript{}{h}{\sfF} \cong
  \prescript{}{h}{\sfF}\prescript{}{j}{\sfF}\) for all \(j,h
  =1,\dotsc,m\) which commute with the 2--morphisms \(x\)
  and \(t\) of \(\prescript{}{j}{\sfF}\).\label{def:17}
\end{definition}

The following result is straightforward.

\begin{prop}
  \label{prop:5}
  The outer tensor product \(\calA_1 \boxtimes \calA_2\) of two \(\fraksl_k\)--categorifications is an \(\fraksl_k^{\oplus 2}\)--categorification.
\end{prop}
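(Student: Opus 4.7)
The plan is to explicitly define the two categorification structures on $\calA_1 \boxtimes \calA_2$ by transporting the given data from each factor through the outer tensor product, and then verify the axioms (SL1)--(SL4) for each structure separately together with the required commutation between them.

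\textbf{Construction.} First I would set $\prescript{}{1}{\sfF} = \sfF^{(1)} \boxtimes \id_{\calA_2}$ and $\prescript{}{2}{\sfF} = \id_{\calA_1} \boxtimes \sfF^{(2)}$, and likewise $\prescript{}{1}{\sfE} = \sfE^{(1)} \boxtimes \id$, $\prescript{}{2}{\sfE} = \id \boxtimes \sfE^{(2)}$, using that $\boxtimes$ is a bifunctor exact in each variable. The natural transformations $\prescript{}{j}x$ and $\prescript{}{j}t$ are transported the same way, e.g.\ $\prescript{}{1}x = x^{(1)} \boxtimes \id$ and $\prescript{}{1}t = t^{(1)} \boxtimes \id$, using the bifunctoriality of $\boxtimes$ on 2-morphisms. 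The required isomorphism $\prescript{}{j}{\sfF}\prescript{}{h}{\sfF} \cong \prescript{}{h}{\sfF}\prescript{}{j}{\sfF}$ for $\{j,h\} = \{1,2\}$ is then tautological, since both sides equal $\sfF^{(1)} \boxtimes \sfF^{(2)}$ via the canonical interchange isomorphism of the bifunctor, and this isomorphism visibly commutes with $\prescript{}{j}x$ and $\prescript{}{j}t$ because the latter act only in one tensor factor.

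\textbf{Verification of (SL1)--(SL4).} For each $j\in\{1,2\}$, the eigenspace decomposition $\sfF^{(j)} = \bigoplus_i \sfF^{(j)}_i$ yields $\prescript{}{j}{\sfF} = \bigoplus_i (\sfF^{(j)}_i \boxtimes \id)$ or the analogous formula on the other side, giving (SL1). Since $\prescript{}{j}x$ and $\prescript{}{j}t$ are defined tensor-factorwise, the degenerate affine Hecke algebra relations of \Cref{def:12}\ref{item:13} follow directly from the corresponding relations on $\calA_j$. Biadjointness (SL3) of $(\prescript{}{j}{\sfE},\prescript{}{j}{\sfF})$ follows from biadjointness of $(\sfE^{(j)}, \sfF^{(j)})$ combined with the fact that $(\id, \id)$ is trivially biadjoint, via the standard construction of adjunctions for bifunctor-built functors (concretely, one tensors the unit/counit of the adjunction on $\calA_j$ with identities).

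\textbf{Grothendieck group axiom.} The non-trivial axiom is (SL4), which I expect to be the main check. Using the isomorphism $[\calA_1 \boxtimes \calA_2] \cong [\calA_1] \otimes [\calA_2]$ recalled in \cref{sec:outer-tensor-product}, the class maps induced by $\prescript{}{j}{\sfF}_i$ and $\prescript{}{j}{\sfE}_i$ become $f_i^{(j)}$ and $e_i^{(j)}$ acting on the $j$-th tensor factor. Integrability of the resulting $\fraksl_k \oplus \fraksl_k$-representation on $[\calA_1] \otimes [\calA_2]$ follows from integrability of each factor. For the weight-vector condition on indecomposable projectives, I would use that the indecomposable projectives of $\calA_1 \boxtimes \calA_2$ are of the form $P_1 \boxtimes P_2$ with $P_j$ indecomposable projective in $\calA_j$ (a consequence of the equivalence $\rmod{A} \boxtimes \rmod{B} \cong \rmod{A \otimes B}$), whose class is the tensor product of two weight vectors and hence a weight vector for $\fraksl_k \oplus \fraksl_k$. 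This assembles the two structures into an $\fraksl_k^{\oplus 2}$-categorification as in \Cref{def:17}, and the whole argument is essentially bookkeeping in view of the bifunctoriality of $\boxtimes$.
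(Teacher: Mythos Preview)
Your proposal is correct and is exactly the routine verification the paper has in mind: the paper itself gives no proof beyond the sentence ``The following result is straightforward,'' so your argument simply spells out the bookkeeping via bifunctoriality of \(\boxtimes\) and the identification \([\calA_1 \boxtimes \calA_2] \cong [\calA_1] \otimes [\calA_2]\).
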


\section{Categorification of simple representations}
\label{sec:categ-simple-repr}

Following \cite{2008arXiv0812.5023R} and \cite{2013arXiv1303.1336L} we define the categorification of a finite-dimensional irreducible \(\mathfrak{sl}_k\)--representation:

\begin{definition}
  \label{def:13}
  Let \(V(\lambda)\) be the finite-dimensional irreducible \(\mathfrak{sl}_k\)--representation of highest weight \(\lambda\). An \(\mathfrak{sl}_k\)--\emph{categorification} of \(V(\lambda)\) is an \(\mathfrak{sl}_k\)--categorification \(\calA\) such that its weight \(\lambda\) subcategory \(\calA_\lambda\) is equivalent to \(\Vect\) and the  \(\mathfrak{sl}_k\)--representation \([\calA]\) is isomorphic to \(V(\lambda)\).
\end{definition}

According to \cite{2013arXiv1303.1336L}, a categorification of \(V(\lambda)\) exists and is unique up to strongly equivariant equivalence. We present now a construction using the BGG category \(\catO\).

\subsection{Categorification of \texorpdfstring{$V$}{the vector representation}}
\label{sec:categorification-ck}

We define the categorification $\ucalC(V)$ of $V$ to be data of 
 the category $\catO^{(1)}_{I}$, the endofunctor $\sfF$ of $ \catO^{(1)}_{I}$ together with its right-adjoint functor $\sfE$, and the natural transformations \(x\in \End(\sfF)\) and \(t \in \End(\sfF^2)\).

 \begin{lemma}
   \label{lem:6}
   This defines an \(\mathfrak{sl}_k\)--categorification of \(V\).
 \end{lemma}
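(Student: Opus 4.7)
The plan is to specialize \Cref{prop:9} to the case \(\enne=1\). First I would note that the parabolic subalgebra of type \((1)\) in \(\gl_1\) is the whole algebra, so \(\catO^{(1)}(\gl_1)=\catO(\gl_1)\); since \(\gl_1\) is abelian and \(\rhogl=0\), the category \(\catO(\gl_1)\) is semisimple with simple objects \(L((j))\) for \(j\in\Z\), each forming a block by itself. Restricting to blocks with highest weight in \(I\) gives \(\catO^{(1)}_I\cong\bigoplus_{j=1}^{k}\Vect\), with simple objects \(L((1)),\dotsc,L((k))\).

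Next I would apply \Cref{prop:9} in the case \(\enne=1\): the functors \(\sfE,\sfF\) together with the natural transformations \(x\in\End(\sfF)\) and \(t\in\End(\sfF^2)\) equip \(\catO^{(1)}_I\) with the structure of an \(\fraksl_k\)--categorification whose Grothendieck group is isomorphic, as an \(\fraksl_k\)--module, to \(V^{\otimes 1}=V\). An explicit isomorphism is provided by \([L((j))]\mapsto v_j\) using the basis of \(V\) from \Cref{ex:3}: indeed \(\sfF L((j))=L((j))\otimes \C\) lives in the block of weight \((j+1)\), and by \Cref{lem:7} combined with \Cref{lem:9} the projection onto the \((+i)\)--component gives \(\sfF_i L((j))=\delta_{ij}L((j+1))\), which matches \(F_i v_j=\delta_{ij}v_{i+1}\); an analogous check works for \(\sfE_i\).

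Finally, I would verify the additional condition of \Cref{def:13} that the weight \(\varpi_1\) subcategory is equivalent to \(\Vect\). Under the above identification the \(\varpi_1\)--weight space of \(V\) is the line spanned by the highest weight vector \(v_1\), so the corresponding subcategory is the single block of \(\catO^{(1)}_I\) containing \(L((1))\), which is a copy of \(\Vect\) by the first paragraph.

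No serious obstacle arises: the lemma reduces to specializing \(\enne=1\) in \Cref{prop:9} and matching the action of the Chevalley generators on the standard basis of \(V\) with the action of \(\sfE_i,\sfF_i\) on the classes of the simple objects of \(\catO^{(1)}_I\).
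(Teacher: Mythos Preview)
Your proposal is correct and follows essentially the same approach as the paper: invoke \Cref{prop:9} with \(\enne=1\) to obtain the \(\fraksl_k\)--categorification structure with Grothendieck group \(V^{\otimes 1}=V\), and then check that the highest weight block \(\catO^{(1)}_{(1)}\) is equivalent to \(\Vect\). The paper's proof is more terse (it simply declares the isomorphism \([\catO^{(1)}_I]\cong V\) ``obvious''), whereas you spell out the semisimplicity of \(\catO(\gl_1)\) and the explicit matching \([L((j))]\mapsto v_j\); this extra detail is fine but not needed.
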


 \begin{proof}
   We already know by \Cref{prop:9} that \(\ucalC(V)\) is an \(\mathfrak{sl}_k\)--categorification. Notice that \(V=V(\varpi_1)\) and the  subcategory of highest weight \(\catO^{(1)}_{\varpi_1}\) is \(\catO^{(1)}_{1}\), which is equivalent to \(\Vect\). Hence we only need to observe that the \(\mathfrak{sl}_k\)--representation \([\catO^{(1)}_{I}]\) is isomorphic to \(V\), which is obvious.
 \end{proof}

\subsection{Categorification of \texorpdfstring{$V(\varpi_r)$}{fundamental representations}}
\label{sec:categ-vvarp}

For $r=1,\ldots,k-1$ we define the categorification $\ucalC(V(\varpi_r))$ of $V(\varpi_r)$  to be data of the category $\catO^{(r)}_{I}$,
the endofunctor $\sfF$ of $ \catO^{(r)}_{I}$  together with its right-adjoint $\sfE$ and the natural transformations $x \in \End(\sfF)$ and $t \in \End(\sfF^2)$.

\begin{lemma}
  \label{lem:8}
  This defines an \(\mathfrak{sl}_k\)--categorification of \(V(\varpi_r)\).
\end{lemma}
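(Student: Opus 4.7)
The plan is to deduce the claim from \Cref{prop:9} by restriction of the categorification structure. Since $\tau = (r)$ gives $\frakp_\tau = \gl_r$, \Cref{def:8} identifies $\catO^{(r)}(\gl_r)$ with the category of finite-dimensional integral $\gl_r$-modules, a Serre subcategory of $\catO(\gl_r)$ closed under tensoring with any finite-dimensional module. Hence the functors $\sfE, \sfF$ from \cref{eq:57,eq:58} and the block projections $\sfE_i, \sfF_i$ preserve $\catO^{(r)}_I$, as do the natural transformations $x$ and $t$. The axioms \ref{item:12}--\ref{item:14} of \Cref{def:12} therefore descend from the ambient categorification.

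It remains to verify \ref{item:15} together with the condition that the weight-$\varpi_r$ subcategory is equivalent to $\Vect$. The dominant weights of $\catO^{(r)}_I$ are the strictly decreasing sequences $\bolda \in I^r$, and the only one with $\phi(\bolda) = \varpi_r$ is $\boldd_0 = (r, r-1, \dotsc, 1)$; its block contains the unique simple object $L(\boldd_0)$ (a one-dimensional power of the determinant), hence is $\Vect$.

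To identify $[\catO^{(r)}_I]$ with $V(\varpi_r) \cong \bigwedge^r V$, I would use the embedding of Grothendieck groups $[\catO^{(r)}_I] \hookrightarrow [\catO_I(\gl_r)] \cong V^{\otimes r}$ induced by the Serre inclusion together with \Cref{lem:10} (so $[M(\bolda)] \mapsto v_{a_1} \otimes \dotsb \otimes v_{a_r}$). Applying the BGG resolution to the finite-dimensional $L(\bolda)$ (for strictly decreasing $\bolda$), which in the paper's shifted convention reads $[L(\bolda)] = \sum_{w \in \bbS_r} (-1)^{\ell(w)} [M(w\bolda)]$, we compute
\[
[L(\bolda)] \;\longmapsto\; \sum_{\sigma \in \bbS_r} (-1)^{\ell(\sigma)} v_{a_{\sigma(1)}} \otimes \dotsb \otimes v_{a_{\sigma(r)}} \;=\; v_{a_1} \wedge \dotsb \wedge v_{a_r}
\]
by \cref{eq:12} specialized at $q = 1$. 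The image of $[\catO^{(r)}_I]$ inside $V^{\otimes r}$ is therefore the $\bbS_r$-antisymmetric subspace $\bigwedge^r V = V(\varpi_r)$, and the identification is $\fraksl_k$-equivariant because the ambient isomorphism $[\catO_I(\gl_r)] \cong V^{\otimes r}$ is; the classes of the indecomposable projectives are weight vectors by the same Verma-filtration argument as in \Cref{prop:9}.

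The main obstacle is the third step: naive character matching is insufficient, because every weight space of $V^{\otimes r}$ at a strictly-decreasing weight $\phi(\bolda)$ has dimension $r!$, while the corresponding space of $[\catO^{(r)}_I]$ is one-dimensional. One genuinely needs the BGG-resolution computation above (equivalently, the Weyl character formula for finite-dimensional $L(\bolda)$) to pinpoint $[\catO^{(r)}_I]$ inside $V^{\otimes r}$ as exactly the antisymmetric tensors. Everything else is inherited bookkeeping from the ambient $\fraksl_k$-categorification.
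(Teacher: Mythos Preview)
Your argument is correct, and the first two paragraphs align closely with the paper's own proof: restrict the ambient categorification on $\catO(\gl_r)_I$ to the Serre subcategory $\catO^{(r)}_I$, so that \ref{item:12}--\ref{item:14} are inherited, and observe that the highest-weight block is $\catO^{(r)}_{(r,r-1,\dotsc,1)}\cong\Vect$.

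Where you diverge is in the identification $[\catO^{(r)}_I]\cong V(\varpi_r)$. You use the BGG resolution to compute explicitly that $[L(\bolda)]$ lands on the antisymmetric tensor $v_{a_1}\wedge\dotsb\wedge v_{a_r}$ inside $V^{\otimes r}$. This is correct and has the virtue of pinning down the actual image. The paper takes a shorter route: once one knows $[\catO^{(r)}_I]$ is an $\fraksl_k$--subrepresentation of $V^{\otimes r}$, it suffices to observe that its weight spaces are one-dimensional and indexed by the strictly decreasing sequences in $I^r$, which is precisely the character of $V(\varpi_r)$; semisimplicity of finite-dimensional $\fraksl_k$--modules then forces $[\catO^{(r)}_I]\cong V(\varpi_r)$.

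Your final paragraph therefore overstates the difficulty. Character matching \emph{is} sufficient here: the point is not to locate $[\catO^{(r)}_I]$ inside the $r!$--dimensional weight space of $V^{\otimes r}$, but only to compare the character of $[\catO^{(r)}_I]$ (intrinsically, as an $\fraksl_k$--module) with that of $V(\varpi_r)$. Both have all weight multiplicities equal to one on the same set of weights, so they are isomorphic. Your BGG computation is a pleasant bonus---it makes the embedding explicit---but it is not needed for the lemma as stated.
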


\begin{proof}
  Let \(\catO=\catO(\gl_r)\). Notice first that the endofunctors \(\sfF\) and \(\sfE\) of \(\catO\) restrict to \(\catO^{(r)}\) by definition of the parabolic category \(\catO\). Now, \(\ucalC(V(\varpi_r))\) satisfies \cref{item:12,item:13,item:14} automatically since \(\catO^{(r)}\) is a full subcategory of \(\catO\). Since it is actually a Serre subcategory, its Grothendieck group is naturally a subgroup of \([\catO]\), and the endomorphisms \(f_i\) and \(e_i\) induced on \([\catO^{(r)}_{I}]\) are just the restrictions of the endomorphisms \(f_i\) and \(e_i\) induced on \([\catO_{I}]\). Hence they satisfy the relations of \(U(\mathfrak{sl}_k)\) and turn \([\catO^{(r)}_{I}]\) into an \(\mathfrak{sl}_k\)--subrepresentation of \([\catO_{I}]\), and \ref{item:15} follows as well.\\
Since the simple object \(L(\bolda)\) of \(\catO_{I}\) belongs to \(\catO^{(r)}_{I}\) if and only if \(\bolda \in \weightsgl^+(\gl_r)\), that is if and only if \(\bolda\) is a strictly decreasing sequence, we have that the weight spaces of \([\catO^{(r)}_{I}]\) correspond to the weight spaces of \(V(\varpi_r)\), hence these two \(\mathfrak{sl}_k\)--representations have to be isomorphic.\\
Finally, note that the  subcategory of highest weight \(\catO^{(r)}_{\varpi_r}\) is just \(\catO^{(r)}_{(r,r-1,\dotsc,1)}\), which is equivalent to \(\Vect\). (Actually, all blocks of \(\catO^{(r)}\) are equivalent to \(\Vect\), or trivial.)
\end{proof}

\subsection{Categorification of \texorpdfstring{$V(\lambda)$}{arbitrary simple representations}}
\label{sec:categorification-v}

Given an arbitrary weight \(\lambda\) for \(\mathfrak{sl}_k\),
we define the categorification $\ucalC(V(\lambda))$ of $V(\lambda)$ to be data of 
the category $\calQ^{\lambda}_{I}$,
the endofunctor $\sfF$ of $ \calQ^{\lambda}_{I}$ together with its right-adjoint \(\sfE\), and the natural transformations $x \in \End(\sfF)$ and $t \in \End(\sfF^2)$.

If $\lambda$ is a fundamental weight $\varpi_r$ the definition coincides with the previous one.

\begin{prop}
  \label{prop:12}
 The data \(\ucalC(V(\lambda))\) defines an \(\mathfrak{sl}_k\)--categorification of \(V(\lambda)\).
\end{prop}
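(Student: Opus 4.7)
The plan is to verify axioms (SL1)-(SL4) of \Cref{def:12} together with the highest-weight conditions of \Cref{def:13} for the data \((\calQ^\lambda_I, \sfE, \sfF, x, t)\). First I would check that \(\sfE\) and \(\sfF\) descend from \(\catO(\gl_\enne)_I\) to \(\calQ^\lambda_I\). Since tensoring with a finite-dimensional \(\gl_\enne\)-module preserves the parabolic conditions (OP1)-(OP3), both functors restrict to \(\catO^\lambda(\gl_\enne)_I\); being biadjoint, they preserve projective-injective objects and hence the additive category \(\calP^\lambda\). By \Cref{prop:4} they therefore induce exact endofunctors on the category of \(\calP^\lambda\)-presentable modules, i.e., on \(\calQ^\lambda_I\), and the natural transformations \(x\) and \(t\) descend accordingly. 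Axioms (SL1)-(SL3) are then inherited from the corresponding assertions for \(\catO(\gl_\enne)_I\) established in \Cref{prop:9}.

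The core of the argument is (SL4), the identification \([\calQ^\lambda_I] \cong V(\lambda)\) as an \(\fraksl_k\)-module. Extending the argument of \Cref{lem:8}, parabolic category \(\catO^\lambda(\gl_\enne)_I\) categorifies \(\widetilde V(\lambda) = \bigotimes_r V(\varpi_r)^{\otimes c_r}\): its parabolic subalgebra has Levi \(\gl_{\lambda_1} \oplus \dotsb \oplus \gl_{\lambda_\ell}\), and each column of height \(r\) contributes a tensor factor \(\bigwedge^r V = V(\varpi_r)\). Taking the Serre quotient \(\calQ^\lambda_I\) kills the classes of simples \(L(\bolda)\) for which \(\tableau^\lambda(\bolda)\) is column-strict but not semi-standard; since \(\sfF_i\) and \(\sfE_i\) descend through the diagram \cref{eq:112}, the resulting Grothendieck group is a genuine \(\fraksl_k\)-quotient of \(\widetilde V(\lambda)\). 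Its weight-\(\mu\) multiplicity counts semi-standard tableaux of shape \(\lambda\) and content \(\mu\), i.e., the Kostka number \(K_{\lambda,\mu}\), which matches the weight-\(\mu\) multiplicity of \(V(\lambda)\); the total dimensions agree by \Cref{lem:22}. Since \(\widetilde V(\lambda)\) is completely reducible and \(V(\lambda)\) occurs in it with multiplicity one, the unique \(\fraksl_k\)-quotient with this character is \(V(\lambda)\), giving \([\calQ^\lambda_I] \cong V(\lambda)\).

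Finally, for the weight-\(\lambda\) subcategory, let \(\boldd_0 \in I^\enne\) denote the weight whose tableau \(\tableau^\lambda(\boldd_0)\) is the canonical column-filling, each column of height \(h\) containing \(1, 2, \dotsc, h\) from top to bottom, so that \(\phi(\boldd_0) = \lambda\). A content-counting argument shows that the canonical filling is the unique column-strict tableau of shape \(\lambda\) and content \(\phi(\boldd_0)\): the prescribed number of columns of height at least \(j\) forces every instance of \(j\) to sit in row \(j\) of its column. Hence the block \(\catO^\lambda_{\boldd_0}\) contains a single simple \(L(\boldd_0)\), and being a quasi-hereditary block with one simple it is equivalent to \(\Vect\); the same then holds for its Serre quotient \(\calQ^\lambda_{\boldd_0}\), which is precisely the weight-\(\lambda\) block of \(\calQ^\lambda_I\). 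I expect the main obstacle to lie in the preceding paragraph: one has to verify that the Serre quotient passage is \(\fraksl_k\)-equivariant, so that the quotient of \(\widetilde V(\lambda)\) produced is genuinely \(V(\lambda)\) as an \(\fraksl_k\)-module rather than merely a vector space of the correct character, and this compatibility rests on the descent of \(\sfE_i, \sfF_i\) through \cref{eq:112} established in the first paragraph.
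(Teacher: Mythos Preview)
Your proof is correct and covers the same checkpoints as the paper's, but the key step identifying \([\calQ^\lambda_I]\) with \(V(\lambda)\) is organized differently. You realize \([\calQ^\lambda_I]\) as an \(\fraksl_k\)--\emph{quotient} of \([\catO^\lambda_I]\cong\widetilde V(\lambda)\) via the exact coapproximation functor, and then match characters via Kostka numbers. The paper instead realizes \([\calQ^\lambda_I]\) as an \(\fraksl_k\)--\emph{subrepresentation} of \([\catO^\lambda_I]\): the prinjective modules \(P^\lambda(\bolda)\) (for \(\tableau^\lambda(\bolda)\) semi-standard) span an additive subcategory of \(\catO^\lambda_I\) stable under \(\sfE,\sfF\), and their classes are simultaneously a basis of \([\calQ^\lambda_I]\); one then reads off the highest weight from the minimal semi-standard filling and concludes by the dimension count of \Cref{lem:22}.

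The paper's route is marginally cleaner for two reasons. First, the ``projectives are weight vectors'' clause of \ref{item:15} is immediate, since the projectives of \(\calQ^\lambda_I\) are literally objects of \(\catO^\lambda_I\) lying in single blocks. Second, it avoids the equivariance issue you flag at the end: you need \(\sfC\sfF_i\cong\sfF_i\sfC\) (and likewise for \(\sfE_i\)) to know the Serre quotient map on Grothendieck groups is \(\fraksl_k\)--linear. This does follow from your first paragraph, but not quite as you phrase it: what you established there is \(\sfi\circ\sfF^{\calQ}\cong\sfF^{\catO}\circ\sfi\) with \(\sfi\) the (right exact) inclusion; passing to right adjoints and using that \(\sfE,\sfF\) are biadjoint gives \(\sfF^{\calQ}\circ\sfC\cong\sfC\circ\sfF^{\catO}\). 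Making that adjunction step explicit would close the only soft spot in your argument.
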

\begin{proof}
  Let \(\catO=\catO(\gl_\enne)\), where \(\lambda\) is a partition of \(n\). We first point out again that the endofunctors \(\sfF\) and \(\sfE\) of \(\catO\) restrict to \(\calQ^\lambda\) by definition. The properties \cref{item:12,item:13,item:14} are satisfied, since \(\calQ^\lambda\) is a full subcategory of \(\catO\). By definition, \(\calQ^\lambda\) is also a full subcategory of \(\catO^\lambda\). Since the latter is a Serre subcategory of \(\catO\), it follows as in the proof of \Cref{lem:8} that \([\catO^\lambda_{I}]\) is an \(\mathfrak{sl}_k\)--representation. The classes of the indecomposable projective modules \(P(\bolda)\), where \(\bolda\) is such that \(\tableau^\lambda(\bolda)\) is a column-strict tableau, form a basis of its Grothendieck group. The indecomposable prinjective objects  are the \(P(\bolda)\)'s, where \(\bolda\) is such that \(\tableau^\lambda(\bolda)\) is semi-standard, and they generate an additive subcategory which is stable under the action of \(\sfF\) and \(\sfE\). Moreover, their classes give a basis of \([\calQ^\lambda_{I}]\). It follows that \([\calQ^\lambda_{I}]\) is an \(\mathfrak{sl}_k\)--subrepresentation of \([\catO^\lambda_{I}]\), and hence \ref{item:15} holds.
Its highest weight corresponds to the semi-standard tableau \(T^{\text{high}}=\tableau^\lambda(\bolda^{\text{high}})\) with the smallest possible entries; in \(T^{\text{high}}\), the entry \(1\) appears once in each column, the entry \(2\) appears once in each column with at least two boxes, and so on. Hence it is easy to see that \(\phi(\bolda^{\text{high}})=\lambda\). Since the dimension of \([\calQ^\lambda_{I}]\) equals the number of semi-standard tableaux of shape \(\lambda\), hence coincides by \Cref{lem:22} with the dimension of \(V(\lambda)\), they must be isomorphic.
Finally, since \(\bolda^{\text{high}}\) is unique in its \(W\)--orbit such that the corresponding tableau of shape \(\lambda\) is column-strict, the summand of \(\calQ^\lambda_I\) corresponding to the highest weight \(\lambda\) is equivalent to \(\Vect\).
\end{proof}

\section{Categorification of tensor products}
\label{sec:categ-tens-prod}

We fix a sequence \(\boldlambda=(\lambda^{(1)},\dotsc,\lambda^{(m)})\) of integral dominant weights \(\lambda^{(l)} \in \weightssl^+\) and consider the (ordered) tensor product
\begin{equation}
V(\boldlambda)=V(\lambda^{(1)}) \otimes \dotsb \otimes V(\lambda^{(m)}).\label{eq:20}
\end{equation}
We want to construct a categorification \(\ucalC(V(\boldlambda))\) of the \emph{ordered} tensor product in the sense of \cite[Definition~3.2]{2013arXiv1303.1336L}:

\begin{definition}
  \label{def:15}
  A categorification of the \emph{ordered} tensor product \(V(\boldlambda)\) is the data of an \(\mathfrak{sl}_k\)--categorification \(\calA\) with endofunctors \(\sfF\), \(\sfE\) and natural transformations \(x \in \End(\sfF)\), \(t \in \End(\sfF^2)\) and \(\calA\) has the structure of a standardly stratified category with poset \(\Xi\).
These data must satisfy the following conditions:
  \begin{enumerate}[label=(TPC\arabic*),leftmargin=*]
  \item \label{item:16} The poset \(\Xi\) is the set of \(m\)--tuples \(\boldnu=(\nu_1,\dotsc,\nu_m)\), where \(\nu_l\) is a weight of \(V(\lambda^{(l)})\).  The preorder is given by the inverse dominance order:
    \begin{equation}
      \label{eq:61}
      \boldnu \preleq \boldnu' \quad \text{if }
      \begin{cases}
   \sum_{l=1}^m \nu_i = \sum_{l=1}^m \nu_i' & \text{and}\\
 \sum_{l=1}^h \nu_l \geq \nu_l' & \text{for all \(h\)}.
      \end{cases}
    \end{equation}
  \item \label{item:17} The associated graded category \(\gr \calA\) is an \(\mathfrak{sl}_k^{\oplus m}\)--categorification such that the subcategory of weight \(\boldnu\) is precisely the subquotient  \(\calA_{\preceq \boldnu}/\calA_{\prec \boldnu}\), and \(\calA_{\boldlambda} \cong \Vect\).
  \item \label{item:18} For each \(M \in \calC_{\boldnu}\) the objects \(\sfF_i \Delta_\boldnu(M)\) and \(\sfE_i \Delta_\boldnu(M)\) admit filtrations with successive quotients being \(\Delta(\prescript{}{j}{\sfF_i}M)\) and \(\Delta(\prescript{}{j}{\sfE_i}M)\), respectively, for \(j=1,\dotsc,m\).
\end{enumerate}
\end{definition}

Set \(\calQ^\boldlambda_I= \bigoplus_\boldd \calQ^\boldlambda_\boldd\), where \(\boldd\) runs over all dominant weights \(\boldd \in I^\enne\) for \(\gl_\enne\). Let \(\Xi_I = \sqcup_\boldd \Xi_\boldd\) and let \(p \colon \St^\boldlambda(I) \mapto \Xi_I\) be the disjoint union of the maps \(p_\boldd \colon \St^\boldlambda(\boldd) \mapto \Xi_\boldd\).

We define  $\ucalC(V(\boldlambda))$ to be  the data of the standardly stratified category $\calQ^{\boldlambda}_{I}$ with poset \(\Xi_I\), endofunctor $\sfF$ with right-adjoint \(\sfE\), and morphisms \(x \in \End(\sfF)\) and \(t \in \End(\sfF^2)\).

\begin{theorem}
  \label{thm:1}
  The data $\ucalC(V(\boldlambda))$ is a categorification of the tensor product \(V(\boldlambda)\) according to \Cref{def:15}.
\end{theorem}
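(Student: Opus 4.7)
The plan is to verify each of the four conditions of \Cref{def:15} in turn, combining the standardly stratified structure from \Cref{thm:3} with the categorical action from \Cref{sec:acti-degen-affine}.

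First, I would establish that the data above gives $\calQ^\boldlambda_I$ the structure of an \(\fraksl_k\)-categorification. The functors \(\sfF,\sfE\) are obtained by tensoring with \(\C^\enne\) and \((\C^\enne)^*\), so they preserve projective-injective modules in \(\catO^\tau\), thus preserve the additive subcategory \(\calP^\boldlambda\), and hence descend to \(\calP^\boldlambda\)-presentable modules by \Cref{prop:4}. The natural transformations \(x,t\) restrict by naturality (\Cref{prop:7}). Conditions \ref{item:12}--\ref{item:14} then follow just as in the proof of \Cref{prop:12}. For \ref{item:15}, the classes \([S^\boldlambda(\bolda)]\) indexed by semi-standard multitableaux form a basis of \([\calQ^\boldlambda_I]\); via the weight map \cref{eq:95} applied block-wise and the dimension count of \Cref{lem:22} applied factor-wise, the resulting \(\fraksl_k\)-module is identified with \(V(\boldlambda)\).

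Next, for \ref{item:16}, an element \(\xi \in \Xi_\boldd\) has a unique dominant representative \(\tilde\xi = (\boldd^{(1)},\dotsc,\boldd^{(m)}) \in \weightsgl^+(\frakl)\); setting \(\nu_j = \phi(\boldd^{(j)})\) gives the claimed \(m\)-tuple of weights of the \(V(\lambda^{(j)})\), and the partial-sum characterization of the dominance order on \(\tilde\xi\) translates directly into the inequality \cref{eq:61}. For \ref{item:17}, \Cref{prop:6} gives $\gr \calQ^\boldlambda_I = \bigoplus_\xi \calC^\boldlambda_\xi$; since \(\calC^\boldlambda = \calQ^{\lambda^{(1)}} \boxtimes \dotsb \boxtimes \calQ^{\lambda^{(m)}}\), iterated application of \Cref{prop:5} to the \(\fraksl_k\)-categorifications of \Cref{prop:12} identifies \(\gr \calQ^\boldlambda_I\) as an \(\fraksl_k^{\oplus m}\)-categorification whose weight-\(\boldnu\) subcategory is \(\calC^\boldlambda_\xi\) for the \(\xi\) corresponding to \(\boldnu\). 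The top stratum \(\calQ^\boldlambda_{\boldlambda}\) is equivalent to \(\Vect\) by the uniqueness of the highest-weight multitableau of shape and content \(\boldlambda\).

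The main content is \ref{item:18}. Applying the tensor identity,
\begin{equation*}
  \sfF \Delta(M) \cong U(\frakg) \otimes_{\frakp_\frakl} (M' \otimes \C^\enne),
\end{equation*}
and \(\C^\enne\) admits a filtration \(0 = V^{(0)} \subset V^{(1)} \subset \dotsb \subset V^{(m)} = \C^\enne\) by \(\frakp_\frakl\)-submodules whose successive quotients are the natural representations \(\C^{\enne_j}\) of \(\gl_{\enne_j}\) (the other Levi factors acting trivially). Since parabolic induction is exact (\Cref{lem:13}), this yields a filtration of \(\sfF \Delta(M)\) with subquotients \(\Delta(M \otimes \C^{\enne_j}) \cong \Delta(\prescript{}{j}\sfF M)\). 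Projecting to the \(i\)-generalized eigenspace of \(x\) (\Cref{lem:7}) gives the required filtration of \(\sfF_i \Delta_\boldnu(M)\) by the \(\Delta(\prescript{}{j}\sfF_i M)\). The case of \(\sfE_i\) is analogous, using \((\C^\enne)^*\) and its natural \(\frakp_\frakl\)-filtration.

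The main obstacle is ensuring in the last step that the orientation of the \(\frakp_\frakl\)-filtration on \(\C^\enne\) is chosen so that the induced filtration on \(\sfF_i \Delta_\boldnu(M)\) is compatible with the preorder \(\preleq\) on \(\Xi\): the \(j\)-th subquotient lies in the block of \(\gr \calQ^\boldlambda_I\) corresponding to the shift of \(\boldnu\) by \(-\alpha_i\) in position \(j\), and one must verify that these shifts are linearly ordered by \(\preleq\) in accordance with the filtration. A secondary point is in the first step: the closure of \(\calP^\boldlambda\) under tensoring with finite-dimensional modules — already implicit in the proof of \Cref{prop:4} via the fact that projective-injective modules in \(\catO^\tau\) are preserved by such tensor products — is needed for \(\sfF,\sfE\) to descend to \(\calQ^\boldlambda\).
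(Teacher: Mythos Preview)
Your proposal is correct and follows essentially the same route as the paper: identify the poset via the maps $\phi_l$, invoke \Cref{prop:6} and \Cref{prop:5} for the associated graded, and use the tensor identity together with the $\frakp_\frakl$-filtration of $\C^\enne$ for \ref{item:18}. Two small remarks: your final worry about the orientation of the filtration is unnecessary, since \ref{item:18} as stated in \Cref{def:15} only asks that the successive quotients be the $\Delta(\prescript{}{j}\sfF_iM)$ for $j=1,\dotsc,m$, with no compatibility with $\preleq$ required; and for \ref{item:16} the paper gives a slightly more explicit argument than your ``translates directly'', checking the order correspondence by reducing to simple reflections $s_h$ and distinguishing whether positions $h,h+1$ lie in the same component of $\boldlambda$.
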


\begin{proof}
  We need to check the three axioms \cref{item:16,item:17,item:18}.\\
We start with \ref{item:16}.  The simple objects of \(\calQ^\boldlambda_{I}\) are indexed by the set \(\St^\boldlambda(I)\). The poset \(\Xi_I\) can be identified with the set \(Z\) of \(k\)--bounded dominant weights for \(\frakl\), where \(\frakl=\gl_{n_1} \oplus \dotsb \oplus \gl_{n_m}\). Via \(m\) copies \(\phi_l\colon \weightsgl(\gl_{n_l}) \mapto \weightssl\) for \(l=1,\dotsc,m\) of the map \(\phi\), this can be further identified with the set of \(m\)--tuples \(\boldnu=(\nu_1,\dotsc,\nu_m)\) where \(\nu_l\) is a weight of \(V(\boldlambda^{(l)})\). The order on \(\Xi_I\) is given by restricting the dominance order on each \(W\)--orbit of \(Z \subset \weightsgl^+(\frakl)\). Since this is generated by simple reflections \(s_1,\dotsc,s_{\enne-1} \in W\) it is enough to consider the case  of some \(\bolda \in \weightsgl(\frakl)\) with \(s_h \bolda \leq \bolda\), i.e.\ \(a_h \geq a_{h+1}\). If both the \(h\)--th and the \((h+1)\)--th entries of \(\bolda\) belong to the same component of the multipartition \(\boldlambda\), then by definition \(\phi_l(\bolda)=\phi_l(s_h \bolda)\) for all \(l\). Otherwise, there is an index \(l\) such that \(\phi_l(s_h \bolda)= \phi_l(\bolda) - \updelta_{a_h}+ \updelta_{a_{h+1}}\) and \(\phi_{l+1}(s_h \bolda) = \pi_{l+1}(\bolda) - \updelta_{a_{h+1}}+ \updelta_{a_h}\), while \(\phi_{l'}(s_h \bolda) = \phi_{l'} (\bolda)\) for all \(l' \neq l,l+1\). Since \(a_{h} \geq a_{h+1}\), we have \(\phi_l(s_h \bolda) \geq  \phi_l(\bolda)\), proving the claim.\\
To verify \ref{item:17} note that by \Cref{prop:6} the associated graded category is 
    \begin{equation}
\gr \calQ^\boldlambda_{I} \cong \textstyle\bigoplus_{\xi \in \Xi_I} \calC^\boldlambda_\xi, \label{eq:97}
\end{equation}
which is the outer tensor product of the categories \(\calC^{\lambda^{(1)}}_{I} \boxtimes \dotsb \boxtimes \calC^{\lambda^{(m)}}_{I}\) and hence carries the structure of an \(\mathfrak{sl}_k^{\oplus m}\)--categorification of \(V(\boldlambda)\) by \Cref{prop:5}.\\
Finally, we check \ref{item:18}. By the tensor identity we have
    \begin{equation}
      \label{eq:44}
      \C^\enne \otimes \Delta(M) \cong \Delta(\C^\enne \otimes M),
    \end{equation}
    where on the left we have a tensor product of \(\gl_\enne\)--representations and on the right a tensor product of \(\frakp_\frakl\)--representations.
   As a \(\frakp_\frakl\)--representation, \(\C^\enne\) is filtered  by
    \begin{equation}
      \label{eq:49}
      \{0\} = \C^0 \subseteq \C^{\enne_1} \subseteq \C^{\enne_1+\enne_2} \subseteq \dotsb \subseteq \C^\enne.
    \end{equation}
    This induces a filtration on \(\Delta(\C^\enne \otimes M)\), and hence on \(\C^\enne \otimes \Delta(M)\). By projecting onto \(\calQ^\boldlambda_I\), this gives a filtration on \(\sfF(\Delta(M))\)  with successive subquotients being \(\Delta(\prescript{}{j}{\sfF}(M))\) for \(j=1,\dotsc,m\), where \(\prescript{}{j}{\sfF}\) denotes the functor \cref{eq:57} on the \(j\)--th factor \(\calC^{\lambda^{(j)}}\) of the outer tensor product \(\calC^\boldlambda\). By projecting onto the right blocks, we get the wanted filtration for \(\sfF_i\). Analogously we get the required filtration for \(\sfE_i\).
\end{proof}

Note that the theorem implies Corollary \ref{Cor1} from the introduction using the uniqueness result of \cite{2013arXiv1303.1336L} and the fact from \cite{2013arXiv1303.1336L} that the Webster algebras give rise to a tensor product categorification as well. 

\section{Graded category \texorpdfstring{\(\catO\)}{O}}
\label{sec:graded-lift-soergel}

In order to lift the categorifications we constructed so far to \(U_q(\fraksl_k)\)--categorifications, we need to recall some basic facts about the graded version of the category \(\catO\).

\subsection{Soergel's functor}
\label{sec:soergels-functor}

The key-tool to construct a graded lift of the category \(\catO\) is Soergel's functor \(\bbV\) from \cite{MR1029692}. Fix \(\frakl=\gl_{\enne_1} \oplus \dotsb \oplus \gl_{\enne_r}\), and let \(\enne=\enne_1 + \dotsb + \enne_r\). Let \(W_\frakl \subseteq \bbS_\enne\) be the Weyl group of \(\frakl\). For all \(\boldd \in \weightsgl^+(\frakl)\) we let \(R_\boldd = \C[x_1(\boldd),\dotsc,x_\enne(\boldd)]\), where we add the symbol \(\boldd\)  to the variable \(x_h\) in order to stress that \(x_h(\boldd)\) belongs to \(R_\boldd\). Let \(\bbS_\boldd \subseteq W_\frakl\) denote the stabilizer of \(\boldd\) (notice that this depends not only on \(\boldd\), but also on \(\frakl\)) and let
\begin{equation}
  \label{eq:65}
  C_{\frakl,\boldd} = \big(R_\boldd/(R_\boldd^+)^{W_\frakl} R_\boldd\big)^{\bbS_\boldd}
\end{equation}
denote the invariants for \(\bbS_\boldd\) inside the algebra of the coinvariants. As usual, \((R_\boldd^+)^{W_\frakl}R_\boldd\) is the ideal generated by polynomials with zero constant term which are symmetric in the variables \(x_{\enne_h+1}(\boldd),\dotsb,x_{\enne_{h+1}}(\boldd)\) for all \(h=1,\dotsb,r\).  Let \(w_{\frakl,0}\) denote the longest element of \(W_\frakl\). Soergel's Endomorphismensatz \cite{MR1029692} provides a canonical identification
\begin{equation}
\End_\frakl(P(w_{\frakl,0}\boldd)) = C_{\frakl,\boldd}.\label{eq:74}
\end{equation}
 We denote by
\begin{equation}
  \label{eq:68}
  \bbV_{\frakl,\boldd} = \Hom_\frakl(P(w_{\frakl,0} \boldd),\blank) \colon \catO(\frakl)_\boldd \mapto \lmod{C_{\frakl,\boldd}}
\end{equation}
Soergel's functor, and set 
\begin{equation}
  \label{eq:67}
  C_\frakl = \bigoplus_{\boldd \in \weightsgl^+(\frakl)} C_{\frakl,\boldd}
\qquad \text{and} \qquad  \bbV_\frakl = \bigoplus_{\boldd \in \weightsgl^+(\frakl)} \bbV_{\frakl,\boldd} \colon \catO(\frakl) \mapto \lmod{C_\frakl}.
\end{equation}
By Soergel's Struktursatz \cite{MR1029692},  \(\bbV_\frakl\) is fully faithful on projective objects.

\subsection{Graded category \texorpdfstring{$\catO$}{O}}
\label{sec:graded-category-cato}
Let us denote by \(P_{\frakl,\boldd}\) a minimal projective generator of \(\catO(\frakl)_\boldd\), and let \(A_{\frakl,\boldd} = \End_{C_{\frakl,\boldd}}(\bbV_\frakl P_{\frakl,\boldd})\). Then since \(\End_\frakl(P_{\frakl,\boldd}) \cong \End_{C_{\frakl,\boldd}} (\bbV_\frakl P_{\frakl,\boldd})\) we have \(\catO(\frakl)_\boldd \cong \rmod{A_{\frakl,\boldd}}\). Moreover  \(A_\frakl := \bigoplus_{\boldd \in \weightsgl^+(\frakl)} A_{\frakl,\boldd}\) is a locally unital algebra and 
\begin{equation}
\catO(\frakl) \cong \rmod{A_\frakl}.\label{eq:87}
\end{equation}
The algebra \(A_\frakl\) admits a natural grading (see \cite[Section~2]{MR2005290} and \cite{MR1322847}). As a consequence, one defines the \emph{graded category \(\catO(\frakl)\)} to be
\begin{equation}
  \label{eq:85}
  \catOZ(\frakl) = \rgmod{A_\frakl}.
\end{equation}
The algebra \(A_\frakl\) is a positively graded locally unital algebra, and the grading is the unique Koszul grading \cite{MR1322847}. The primitive idempotents projecting onto the indecomposable modules are homogeneous of degree \(0\). It follows that Serre subcategories and Serre quotients of \(\rgmod{A_\frakl}\) inherit a natural grading. In this way we define graded versions \(\catOZ^\boldlambda\) and \(\calQZ^\boldlambda\) of \(\catO^\boldlambda\) and \(\calQ^\boldlambda\), respectively.

\subsection{Graded lifts of functors}
\label{sec:grad-lifts-funct}

Let \(\sfG\colon \catO(\frakg) \mapto \catO(\frakg')\) be a right-exact functor, which is compatible with direct sums. Then under the equivalence of categories \cref{eq:87}, (cf.\ \cite[Lemma~3.4]{MR2017061} and \cite[2.2]{MR0249491}),  we have that \(\sfG\) is isomorphic to the tensor product functor \(\blank \otimes_{A_\frakg} \Hom_{A_{\frakg'}}(A_{\frakg'},\sfG A_{\frakg})\)

Suppose now that \(\sfG\) sends projectives to projectives, and suppose moreover that we have an isomorphism of functors \(\bbV_{\frakg'} \sfG \cong \sfG' \bbV_{\frakg'}\), where \(\sfG'\) is a functor \(\lmod{C_{\frakg}} \mapto \lmod{C_{\frakg'}}\). Then by Soergel's theorem we have 
\begin{equation*}
\Hom_{A_{\frakg'}} (A_{\frakg'}, \sfG A_{\frakg}) \cong \Hom_{C_{\frakg'}}(\bbV_{\frakg'} A_{\frakg'}, \bbV_{\frakg'} \sfG A_{\frakg}) \cong
\Hom_{C_{\frakg'}}(\bbV_{\frakg'} A_{\frakg'}, \sfG' \bbV_{\frakg} A_{\frakg}). \label{eq:103}
\end{equation*}
It follows that if  \(\sfG'\) admits a graded lift \(\tilde \sfG' \colon \gmod{C_\frakg} \mapto \gmod{C_{\frakg'}}\) then  \(\sfG\) as well.

\subsection{Graded translation functors}
\label{sec:grad-transl-funct}

Let now \(\frakg=\gl_\enne\). Recall that for all \(i \in \Z\) we have adjoint functors \(\sfF_i,\sfE_i\colon \catO(\frakg) \mapto \catO(\frakg)\).  Their restrictions to single blocks \(\sfF_i\colon \catO(\frakg)_{\boldd} \mapto \catO(\frakg)_{(+i)\boldd}\) and \(\sfE_i \colon \catO(\frakg)_{\boldd} \mapto \catO(\frakg)_{(-i)\boldd}\) satisfy the following properties:

\begin{lemma}
\label{lem:grading}
The functors \(\sfF_i\colon \catO(\frakg)_{\boldd} \mapto \catO(\frakg)_{(+i)\boldd}\) and \(\sfE_i \colon \catO(\frakg)_{\boldd} \mapto \catO(\frakg)_{(-i)\boldd}\) are indecomposable exact functors. Hence a graded lift is unique up to isomorphism and overall grading shift. 
\end{lemma}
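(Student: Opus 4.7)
The plan is to handle the three assertions in turn: exactness, indecomposability, and uniqueness of the graded lift. Exactness is the cheap part: both \(\sfF\) and \(\sfE\) are given by tensoring with the finite-dimensional \(\gl_\enne\)--modules \(\C^\enne\) and \((\C^\enne)^*\), which is an exact operation on \(\catO(\frakg)\); projection onto a block of \(\catO(\frakg)\) is likewise exact, being the multiplication by a central idempotent in the center of \(\catO\). Since \(\sfF_i = \pr_{(+i)\boldd}\circ\sfF\) and \(\sfE_i = \pr_{(-i)\boldd}\circ\sfE\) by construction, both are compositions of exact functors and are hence exact.

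For indecomposability, the key input is the classification of indecomposable projective functors between two blocks of \(\catO(\gl_\enne)\) (Bernstein--Gelfand; see also \cite[\textsection 7]{MR2428237}). The functors \(\sfF_i\) and \(\sfE_i\), being summands of translation functors cut out by central characters, are projective functors between the single blocks \(\catO(\frakg)_\boldd\) and \(\catO(\frakg)_{(\pm i)\boldd}\). To pin down indecomposability it suffices to check that \(\sfF_i\) sends a single well-chosen indecomposable projective to an indecomposable object. The natural candidate is the antidominant projective: by a direct computation using \Cref{lem:10,lem:7} (generalized \(i\)-eigenspace description) combined with the fact that the tensor product of a Verma module with \(\C^\enne\) has a Verma filtration, one sees that \(\sfF_i\) applied to the big projective of \(\catO(\frakg)_\boldd\) has the same underlying Verma multiplicities as the big projective of \(\catO(\frakg)_{(+i)\boldd}\), and hence corresponds under the classification to an indecomposable projective functor. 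The same argument applies to \(\sfE_i\) by biadjointness.

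For the uniqueness of the graded lift, the plan is to invoke the following standard principle (see for example \cite[\textsection 3]{MR2005290} or the discussion of graded lifts in \cite{MR2017061}). Under the equivalence \(\catO(\frakg)_\boldd\cong\rmod{A_{\frakg,\boldd}}\) with \(A_{\frakg,\boldd}\) a positively graded Koszul algebra, a right-exact functor commuting with direct sums is given by tensoring with an \((A_{\frakg,(\pm i)\boldd},A_{\frakg,\boldd})\)--bimodule; a graded lift corresponds to a graded lift of this bimodule. Two graded lifts of the same ungraded bimodule differ by a graded bimodule automorphism of the source (or equivalently target), and for an indecomposable bimodule the group of such automorphisms modulo those inducing the identity ungraded is just \(\Z\), realized by overall grading shifts. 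Combining this with the indecomposability established in the previous step yields the uniqueness statement.

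The main obstacle in this plan is the indecomposability step: while heuristically standard, making it airtight requires either explicitly invoking the Bernstein--Gelfand classification of projective functors between singular blocks (so that one cannot have a ``partial'' summand), or directly verifying that \(\End(\sfF_i)\) is local on a chosen block, which in the singular case is delicate because \(\sfF_i\) can be a translation functor through walls. In the regular case both methods are straightforward; the singular case can be reduced to the regular one by an argument with translation functors analogous to the one used in the proof of \Cref{lem:3}.
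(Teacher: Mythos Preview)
Your overall architecture is fine and matches the paper: exactness is immediate, uniqueness of the graded lift follows from indecomposability of the underlying bimodule by the standard argument (the paper cites \cite[Proposition~3.11]{MR2120117}), and indecomposability is the only thing requiring work. The problem is in your indecomposability step.

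You reduce to showing that \(\sfF_i\) sends the antidominant (big) projective \(P(w_{0}\boldd)\) to the big projective of \(\catO(\frakg)_{(+i)\boldd}\), arguing via Verma multiplicities. This claim is false. Take \(\frakg=\gl_2\), a regular block \(\boldd=(2,1)\), and \(i=1\), so \((+1)\boldd=(2,2)\). The big projective \(P(1,2)\) has Verma factors \(M(2,1)\) and \(M(1,2)\); applying \(\sfF_1\) and using \Cref{lem:10} gives \([\sfF_1 P(1,2)]=2[M(2,2)]\), whereas the big projective of \(\catO_{(2,2)}\) is just \(M(2,2)\). In general \(\sfF_i\) of the big projective is a direct sum of several copies of the target big projective, so this test object cannot detect indecomposability.

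The paper instead applies the functor to the \emph{dominant} Verma module \(M(\boldd)\) (which is projective). By \Cref{lem:10}, \(\sfF_i M(\boldd)\) has a Verma filtration in which every Verma module occurs at most once: the subquotients are \(M(\boldd+\epsilon_l)\) for those \(l\) with \(d_l=i\), and these weights are pairwise distinct. The paper then invokes \cite[Theorem~8.1]{MR2017061} to conclude that a projective object with a multiplicity-free Verma flag has simple socle, hence is indecomposable. Via the Bernstein--Gelfand classification (or \cite[Theorem~3.1]{MR2120117}), this forces \(\sfF_i\) itself to be indecomposable. If you replace your antidominant computation by this dominant one, your sketch goes through.
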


\begin{proof}
The functors are exact by definition. The uniqueness follows then from the indecomposability by standard arguments, see e.g. \cite[Proposition 3.11]{MR2120117}. For the indecomposability it is enough to show that the Verma module with maximal possible weight in the block is sent to an indecomposable projective object $P$, \cite[Theorem 3.1]{MR2120117}. By Lemma \ref{lem:10} $P$ has a Verma filtration such that every Verma module appears at most once. Hence the socle of $P$ must be simple by \cite[Theorem 8.1]{MR2017061} and so $P$ is indecomposable.
\end{proof}

Fix \(\boldd \in \weightsgl^+(\frakg)\), and let \(\boldd' = (+i)\boldd\). Notice that the sequences \(\boldd\) and \(\boldd'\) differ at most in one place, and that \((-i)\boldd' = \boldd\). Let
\begin{equation}
\prescript{(+i)}{}{C}_{\frakg,\boldd} = (R_\boldd/(R_\boldd^+)^{W_\frakg} R_\boldd)^{\bbS_\boldd \cap \bbS_{\boldd'}}.\label{eq:78}
\end{equation}
If we define in an analogous way \(\prescript{(-i)}{}{C}_{\frakg,\boldd'}\), then we have an obvious isomorphism \(\prescript{(+i)}{}{C}_{\frakg,\boldd} \cong \prescript{(-i)}{}{C}_{\frakg,\boldd'}\) given by renaming the variables \(x_i(\boldd) \mapsto x_i(\boldd')\). We have natural inclusions \(C_{\frakg,\boldd},C_{\frakg,\boldd'} \into \prescript{(+i)}{}{C}_{\frakg,\boldd}\) which turn \(\prescript{(+i)}{}{C}_{\frakg,\boldd}\) into a \((C_{\frakg,\boldd'},C_{\frakg,\boldd})\)--bimodule and \(\prescript{(-i)}{}{C}_{\frakg,\boldd'}\) into a \((C_{\frakg,\boldd},C_{\frakg,\boldd'})\)--bimodule.
By summing up we get \(C_\frakg\)--bimodules \(\prescript{(+i)}{}{C}_{\frakg} = \bigoplus_{\boldd \in \weightsgl^+(\frakg)} \prescript{(+i)}{}{C}_{\frakg,\boldd}\) and \(\prescript{(-i)}{}{C}_{\frakg} = \bigoplus_{\boldd \in \weightsgl^+(\frakg)} \prescript{(-i)}{}{C}_{\frakg,\boldd}\).

\begin{prop}
  \label{prop:15}
  We have isomorphisms of functors
  \begin{equation}
    \label{eq:84}
    \bbV_{\frakg}\sfF_i \cong  \prescript{(+i)}{}{C}_{\frakg} \otimes_{C_{\frakg}} \bbV_{\frakg} \qquad \text{and} \qquad  \bbV_{\frakg}\sfE_i \cong  \prescript{(-i)}{}{C}_{\frakg} \otimes_{C_{\frakg}} \bbV_{\frakg}
  \end{equation}
\end{prop}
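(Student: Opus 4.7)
The plan is to use Soergel's description of projective functors via bimodules over the coinvariant algebras. First, I would observe that both $\sfF_i$ and $\sfE_i$ are projective functors in the sense of Bernstein--Gelfand, since they are direct summands (obtained by projecting onto a specific block) of the exact endofunctors $\blank \otimes \C^\enne$ and $\blank \otimes (\C^\enne)^*$, respectively; in particular they are exact and preserve projectives. The right-hand sides $\prescript{(\pm i)}{}{C}_\frakg \otimes_{C_\frakg} \blank$ are tensor functors whose bimodules are free as right $C_\frakg$-modules (they are partial invariants inside a free module of rank one over $C_\frakg$, and hence free), so they are also exact.

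By Soergel's Struktursatz, $\bbV_\frakg$ is fully faithful on the additive category of projective modules in $\catO(\frakg)$. Combined with the right-exactness of both sides, this reduces the proposition, for each block $\boldd$, to producing a natural isomorphism of $(C_{\frakg,(+i)\boldd}, C_{\frakg,\boldd})$-bimodules
\begin{equation*}
\bbV_\frakg\bigl(\sfF_i P(w_{\frakg,0}\boldd)\bigr) \;\cong\; \prescript{(+i)}{}{C}_{\frakg,\boldd},
\end{equation*}
and the analogous statement for $\sfE_i$. Indeed, once this bimodule isomorphism is established, summing over all dominant weights $\boldd$ yields the claimed isomorphism of functors via the Morita-theoretic representation of right-exact functors on $\rmod{A_\frakg}$.

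To obtain the bimodule isomorphism, I would factor $\sfF_i$ through a translation functor $\sfT_\boldd^{(+i)\boldd}$ and appeal to the classical computation of Soergel \cite{MR1029692}, which identifies the image under $\bbV$ of a translation between blocks with singularities $\bbS_\boldd$ and $\bbS_{(+i)\boldd}$ as tensoring by the partial invariants $\bigl(R_\boldd/(R_\boldd^+)^{W_\frakg} R_\boldd\bigr)^{\bbS_\boldd \cap \bbS_{(+i)\boldd}}$, which by definition equals $\prescript{(+i)}{}{C}_{\frakg,\boldd}$. The main obstacle is the bookkeeping needed to isolate the contribution from the single block $(+i)\boldd$: by \Cref{lem:10}, $\sfF$ decomposes as a sum of translations to blocks of weights $\boldd + \epsilon_l$, and \Cref{lem:9} is required to identify which of these summands contribute to $\sfF_i$. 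Once this is done, the bimodule structure is automatic, since $\prescript{(+i)}{}{C}_{\frakg,\boldd}$ was designed with the group $\bbS_\boldd \cap \bbS_{(+i)\boldd}$ precisely to encode this datum. The $\sfE_i$ statement then follows either by a parallel argument, or by invoking the biadjunction $(\sfF_i, \sfE_i)$ together with the variable-renaming isomorphism $\prescript{(+i)}{}{C}_{\frakg,\boldd} \cong \prescript{(-i)}{}{C}_{\frakg,(+i)\boldd}$ of $(C_{\frakg,\boldd}, C_{\frakg,(+i)\boldd})$-bimodules.
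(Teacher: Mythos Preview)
Your overall strategy matches the paper's: reduce to a single block, recognise \(\sfF_i\) as a projective functor, and invoke Soergel's description of translation functors on the level of coinvariants. The gap is in how you invoke Soergel. You propose to treat \(\sfF_i|_{\catO(\frakg)_\boldd}\) as the single translation \(\sfT_\boldd^{(+i)\boldd}\) and claim that \cite[Theorem~10]{MR1029692} directly identifies \(\bbV\) of this translation with tensoring by \(\bigl(R_\boldd/(R_\boldd^+)^{W_\frakg}R_\boldd\bigr)^{\bbS_\boldd\cap\bbS_{(+i)\boldd}}\). But Soergel's theorem is stated for translation \emph{onto} or \emph{out of} a wall, i.e.\ between blocks whose stabilisers satisfy an inclusion. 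In general \(\bbS_\boldd\) and \(\bbS_{(+i)\boldd}\) are incomparable: if \(\boldd\) has \(c_i\) entries equal to \(i\) and \(c_{i+1}\) entries equal to \(i+1\), then \((+i)\boldd\) has \(c_i-1\) and \(c_{i+1}+1\) such entries, so neither stabiliser contains the other unless \(c_i=1\) or \(c_{i+1}=0\). Hence Soergel's theorem does not apply directly to \(\sfT_\boldd^{(+i)\boldd}\), and your sentence about ``a translation between blocks with singularities \(\bbS_\boldd\) and \(\bbS_{(+i)\boldd}\)'' asserts something stronger than what the reference provides.

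The paper fixes this by choosing an auxiliary dominant weight \(\boldb\) with stabiliser exactly \(\bbS_\boldd\cap\bbS_{(+i)\boldd}\) and then using the classification of projective functors to obtain \(\sfF_i|_{\catO(\frakg)_\boldd}\cong \sfT_\boldb^{(+i)\boldd}\circ\sfT_\boldd^\boldb\). Now \(\sfT_\boldd^\boldb\) is translation out of a wall and \(\sfT_\boldb^{(+i)\boldd}\) is translation onto a wall, so Soergel's theorem applies to each factor separately; composing the two resulting bimodule functors gives precisely \(\prescript{(+i)}{}{C}_{\frakg,\boldd}\otimes_{C_{\frakg,\boldd}}\blank\). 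Note also that your appeal to \Cref{lem:10} and \Cref{lem:9} is not the right tool for this step: those lemmas only compute the effect of \(\sfF\) in the Grothendieck group, whereas the required identification \(\sfF_i\cong \sfT_\boldb^{(+i)\boldd}\circ\sfT_\boldd^\boldb\) is a statement about functors and comes from Bernstein--Gelfand, not from a decomposition of \([\sfF M(\bolda)]\).
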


\begin{proof}
  Let us prove the first isomorphism, the second one being analogous. Of course it is sufficient to check that for each \(\boldd \in \weightsgl^+(\frakg)\) we have
  \begin{equation}
    \label{eq:83}
    \bbV_{\frakg,\boldd'}\sfF_i|_{\catO(\frakg)_\boldd} \cong C_{\frakg,\boldd'} \otimes_{C_{\frakg,\boldd'}} \prescript{(+i)}{}{C}_{\frakg,\boldd} \otimes_{C_{\frakg,\boldd}} \bbV_{\frakg,\boldd}.
  \end{equation}
  Let \(\sfT_\boldd^\boldb\colon \catO(\frakg)_\boldd \mapto \catO(\frakg)_\boldb\) be the usual translation functor. Fix \(\boldd \in \Z^\enne_\geq\), and let \(\boldd' = (+i)\boldd\). Let also \(\boldb \in \bbZ^\enne_\geq\) be an integral dominant weight with stabilizer \(\bbS_\boldd \cap \bbS_{\boldd'}\). It follows then from the classification of projective functors that \(\sfF_i|_{\catO(\frakg)_{\boldd}} \cong \sfT_\boldb^{\boldd'} \circ \sfT_\boldd^\boldb\). But \(\sfT_\boldb^{\boldd'}\) is a translation functor onto a wall, while \(\sfT_\boldd^\boldb\) is a translation functor out of a wall. Then \cref{eq:83} follows from \cite[Theorem~10]{MR1029692}.
\end{proof}

We fix graded shifts \(\sfFZ_i\) and \(\sfEZ_i\) of \(\sfF_i\) and \(\sfE_i\) by setting
\begin{align}
  \sfFZ_i|_{\catO(\frakg)_\boldd} &= \blank \otimes_{A_{\frakg,\boldd}} \Hom_{C_{\frakg,\boldd}} (\bbV_\frakg A_{\frakg,\boldd}, \prescript{(+i)}{}{C}_{\frakg,\boldd} \otimes_{C_{\frakg,\boldd}} \bbV_{\frakg,\boldd} A_{\frakg,\boldd} \langle -c_{\boldd,i} \rangle)\\
  \sfEZ_i|_{\catO(\frakg)_\boldd} &= \blank \otimes_{A_{\frakg,\boldd}} \Hom_{C_{\frakg,\boldd}} (\bbV_\frakg A_{\frakg,\boldd}, \prescript{(-i)}{}{C}_{\frakg,\boldd} \otimes_{C_{\frakg,\boldd}} \bbV_{\frakg,\boldd} A_{\frakg,\boldd} \langle -c_{\boldd,i+1} \rangle)
\end{align}
where
\begin{equation}
  \label{eq:104}
  c_{\boldd,i} = \#\{h \suchthat d_h = i\}.
\end{equation}

\begin{prop}
  \label{prop:19}
  We have a graded adjunction
  \begin{equation}
    \label{eq:105}
    \sfFZ_i |_{\catO(\frakg)_\boldd} \adjunction \sfEZ_i|_{\catO(\frakg)_{(+i)\boldd}} \langle c_{\boldd,i}- c_{\boldd,i+1} + 1 \rangle.
  \end{equation}
\end{prop}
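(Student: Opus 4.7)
My plan is to deduce the graded adjunction from the ungraded adjunction $\sfF_i \dashv \sfE_i$ (which comes from the biadjointness of $\sfF$ and $\sfE$ restricted to the appropriate block summands) by carefully tracking grading shifts via Soergel's description of the translation functors. By \Cref{lem:grading} both $\sfFZ_i$ and $\sfEZ_i$ are indecomposable graded lifts of $\sfF_i$ and $\sfE_i$, so any graded adjunction between them is unique up to an overall grading shift. The entire content of the proposition is therefore the computation of that shift, which I will carry out at the level of coinvariant bimodules via Frobenius duality.

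Write $\boldd' = (+i)\boldd$. By Soergel's Struktursatz combined with \Cref{prop:15}, applying $\bbV_\frakg$ identifies $\sfFZ_i|_{\catO(\frakg)_\boldd}$ with the bimodule tensor product functor
\[F := \prescript{(+i)}{}{C}_{\frakg,\boldd}\langle -c_{\boldd,i}\rangle \otimes_{C_{\frakg,\boldd}} (\blank)\]
and $\sfEZ_i|_{\catO(\frakg)_{\boldd'}}$ with
\[E := \prescript{(-i)}{}{C}_{\frakg,\boldd'}\langle -c_{\boldd',i+1}\rangle \otimes_{C_{\frakg,\boldd'}} (\blank).\]
The graded right adjoint of $F$ is $\Hom_{C_{\frakg,\boldd'}}(\prescript{(+i)}{}{C}_{\frakg,\boldd}\langle -c_{\boldd,i}\rangle, \blank)$, and the problem reduces to expressing this functor as tensoring with $\prescript{(-i)}{}{C}_{\frakg,\boldd'}$ with the correct grading shift.

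The key input is that every $C_{\frakg,\boldb}$ is a graded Frobenius algebra (canonically isomorphic to the cohomology ring of a partial flag variety), and that the bimodule $\prescript{(+i)}{}{C}_{\frakg,\boldd} = (R/I)^{\bbS_\boldd \cap \bbS_{\boldd'}}$ is graded-free as a $C_{\frakg,\boldd'}$-module. Since $\boldd$ and $\boldd'$ differ in only one entry (changing $i+1$ to $i$), the Young subgroup $\bbS_\boldd \cap \bbS_{\boldd'}$ differs from $\bbS_{\boldd'}$ only by shrinking the block permuting positions of value $i$ from size $c_{\boldd,i}+1 = c_{\boldd',i}$ down to $c_{\boldd,i}$, so the graded generators appear in degrees $0, 2, 4, \ldots, 2c_{\boldd,i}$. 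The palindromicity of this set together with Frobenius duality yields the bimodule isomorphism
\[\Hom_{C_{\frakg,\boldd'}}\bigl(\prescript{(+i)}{}{C}_{\frakg,\boldd},\, C_{\frakg,\boldd'}\bigr) \cong \prescript{(+i)}{}{C}_{\frakg,\boldd}\langle -2c_{\boldd,i}\rangle.\]
Combining this with the natural vector-space identification $\prescript{(+i)}{}{C}_{\frakg,\boldd}\cong \prescript{(-i)}{}{C}_{\frakg,\boldd'}$ (renaming of variables), with the shifts $\langle -c_{\boldd,i}\rangle$ and $\langle -c_{\boldd',i+1}\rangle$ already built into $F$ and $E$, and using $c_{\boldd',i+1} = c_{\boldd,i+1}-1$, the bookkeeping assembles into the claimed overall shift $c_{\boldd,i} - c_{\boldd,i+1} + 1$.

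The main obstacle I expect is the careful tracking of grading conventions, especially how $\langle\cdot\rangle$ interacts with $\Hom$ and how the Koszul grading on $\catO(\frakg)$ matches the algebraic grading on the coinvariant algebras (twice the cohomological degree). Once those conventions are fixed, the only combinatorial input required is the identity $\binom{c_{\boldd,i}+1}{2}-\binom{c_{\boldd,i}}{2}=c_{\boldd,i}$, which produces the length difference $\ell(w_0^{\bbS_{\boldd'}})-\ell(w_0^{\bbS_\boldd \cap \bbS_{\boldd'}})$ that controls the Frobenius shift; everything else is standard manipulation with graded bimodules over Frobenius algebras.
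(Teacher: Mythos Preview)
Your overall strategy---compute the graded right adjoint directly on the coinvariant side via Frobenius duality for the free extension \(C_{\frakg,\boldd'} \subset \prescript{(+i)}{}{C}_{\frakg,\boldd}\)---is a legitimate alternative to the paper's proof, which instead factors \(\sfF_i|_{\catO(\frakg)_\boldd}\) as a composite \(\sfTZ_\boldb^{\boldd'}\circ\sfTZ_\boldd^\boldb\) of a translation out of a wall followed by one onto a wall (with \(\bbS_\boldb=\bbS_\boldd\cap\bbS_{\boldd'}\)) and then simply quotes the known graded adjunctions \(\sfTZ_\boldd^\boldb \adjunction \sfTZ_\boldb^\boldd\langle c_{\boldd,i}\rangle\) and \(\sfTZ_\boldb^{\boldd'} \adjunction \sfTZ_{\boldd'}^\boldb\langle -c_{\boldd',i+1}\rangle\) from the literature. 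The paper's route is more modular and avoids having to chase Frobenius shifts by hand; your route is more self-contained but, as you anticipate, is sensitive to conventions.

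However, your execution contains a genuine combinatorial error. Since \(\phi((+i)\boldd)=\phi(\boldd)-\alpha_i=\phi(\boldd)-\updelta_i+\updelta_{i+1}\), passing from \(\boldd\) to \(\boldd'=(+i)\boldd\) replaces one entry \(i\) by an entry \(i+1\); hence \(c_{\boldd',i}=c_{\boldd,i}-1\) and \(c_{\boldd',i+1}=c_{\boldd,i+1}+1\), the opposite of what you write. Consequently, in the inclusion \(\bbS_\boldd\cap\bbS_{\boldd'}\subset\bbS_{\boldd'}\) it is the \((i{+}1)\)-block that shrinks, from size \(c_{\boldd',i+1}=c_{\boldd,i+1}+1\) down to \(c_{\boldd,i+1}\), not the \(i\)-block. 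The rank of \(\prescript{(+i)}{}{C}_{\frakg,\boldd}\) over \(C_{\frakg,\boldd'}\) is therefore \(c_{\boldd,i+1}+1\), with free generators in degrees \(0,2,\dotsc,2c_{\boldd,i+1}\), so the Frobenius shift is governed by \(c_{\boldd,i+1}\), not \(c_{\boldd,i}\). With these corrections (and the correct identity \(c_{\boldd',i+1}=c_{\boldd,i+1}+1\)) your bookkeeping will produce the claimed shift \(c_{\boldd,i}-c_{\boldd,i+1}+1\); with the signs as you have them it does not. You should also make explicit why an adjunction computed on the coinvariant side transfers back to \(\catOZ\): this uses that \(\bbV_\frakg\) is fully faithful on projectives together with \Cref{lem:grading}, so that the graded adjoint is determined by its effect on a projective generator.
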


\begin{proof}
  With the notation from the proof of \Cref{prop:15}, one can fix graded lifts of the translation functors so that we have \(\sfF_i|_{\catO(\frakg)_\boldd} \cong \sfTZ_\boldb^{\boldd'} \circ \sfTZ_\boldd^\boldb\) and \(\sfE_i|_{\catO(\frakg)_{(+i)\boldd}} \cong \sfTZ^\boldd_\boldb \circ \sfTZ^\boldb_{\boldd'}\) (see \cite[\textsection{}4.4]{miophd2}). We have graded adjunctions (see \cite[Lemma~4.4.1]{miophd2}, which is a consequence of the classification theorem of projective functors)
  \begin{equation}
    \label{eq:107}
    \sfTZ_\boldd^\boldb \adjunction \sfTZ_\boldb^\boldd \langle c_{\boldd,i} \rangle \qquad \text{and} \qquad \sfTZ_\boldb^{\boldd'} \adjunction \sfTZ_{\boldd'}^\boldb \langle - c_{\boldd',i+1}\rangle.
  \end{equation}
  Now \(\boldd' = (+i)\boldd\) and \(c_{\boldd',i+1}= c_{\boldd,i+1}+1\). Hence \(\sfTZ_\boldb^{\boldd'} \circ \sfTZ_\boldd^\boldb \adjunction \sfTZ_\boldb^\boldd \circ \sfTZ_{\boldd'}^\boldb \langle c_{\boldd,i} - c_{\boldd',i+1} - 1\rangle\), which is our claim.
\end{proof}

\subsection{Graded standardization functor}
\label{sec:graded-functor-delta}

As before let \(\frakl=\gl_{\enne_1}\oplus \dotsb\oplus \gl_{\enne_r}\) be a standard Levi subalgebra in \(\frakg\). Let \(\boldd \in \weightsgl^+(\frakg)\) and \(\Xi=\Xi_\boldd\) and \(p=p_\boldd\)  as in \cref{sec:stand-funct}. 

\begin{lemma}
  \label{lem:21}
  Let \(\xi \in \Xi\). The functor \(\Delta_\xi \colon \catO(\frakl)_{\xi} \mapto \catO(\frakg)_{\preleq \xi}\) is gradable. A graded lift is determined uniquely by fixing the degree shift on \(\Delta_\xi(M(\tilde \xi))\), where \(M(\tilde \xi)\) is the dominant Verma module of \(\catO(\frakl)_\xi\).
\end{lemma}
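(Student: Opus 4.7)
The plan is to identify \(\Delta_\xi\) explicitly with tensoring by a naturally graded bimodule, by combining \Cref{prop:21} with the description of Serre quotients recalled in \cref{sec:serre-subc-serre-1}.

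First, I would use that the Serre subcategory \(\catO(\frakg)_{\preleq \xi}\) inherits a grading from \(\catO(\frakg)\) (\cref{sec:graded-category-cato}) and is equivalent to \(\rmod{A}\) for some positively graded algebra \(A\). Combining \Cref{prop:21} with the general bimodule description of Serre quotients in \cref{sec:serre-subc-serre-1}, the quotient functor \(\pi_\xi\) coincides with \(\blank \mapsto e(\blank)\), where \(e \in A\) is the sum of the primitive idempotents corresponding to simples at level \(\xi\). By \Cref{lem:18}, the left adjoint \(\Delta_\xi\) is then given by
\begin{equation*}
  \Delta_\xi \;\cong\; \blank \otimes_{eAe} eA \colon \rmod{eAe} \mapto \rmod{A}.
\end{equation*}

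Next, primitive idempotents in the Koszul algebra \(A\) are homogeneous of degree \(0\) (\cref{sec:graded-category-cato}), hence so is \(e\). Therefore \(eAe\) is a graded subalgebra---Morita equivalent to the graded algebra presenting \(\catOZ(\frakl)_\xi\)---and \(eA\) is naturally a graded \((eAe, A)\)-bimodule. Tensoring against \(eA\) yields a functor
\begin{equation*}
  \Delta_\xi^{\bbZ} \;=\; \blank \otimes_{eAe} eA \colon \rgmod{eAe} \mapto \rgmod{A}
\end{equation*}
which is the desired graded lift of \(\Delta_\xi\). For uniqueness, a standard computation gives \(\End_{(eAe,A)}(eA) \cong Z(eAe)\), the center of \(eAe\). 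Since \(\catO(\frakl)_\xi\) is a single block of \(\catO(\frakl)\), the algebra \(eAe\) is indecomposable and \(Z(eAe)\) has no nontrivial idempotents. Hence \(eA\) is an indecomposable \((eAe,A)\)-bimodule, and graded structures on \(eA\) compatible with the fixed gradings of \(eAe\) and \(A\) form a single \(\bbZ\)-torsor under overall shift. This residual shift is pinned down by specifying the degree of the cyclic generator of \(\Delta_\xi(M(\tilde\xi)) = M_\frakg(\tilde\xi)\), which establishes the uniqueness claim.

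The main obstacle will be making the identification of \(\Delta_\xi\) with \(\blank \otimes_{eAe} eA\) fully precise---specifically, verifying that the adjoint pair \((\Delta_\xi, \pi_\xi)\) from \Cref{lem:18} corresponds, under the equivalence \(\catO(\frakg)_{\preleq \xi} \cong \rmod{A}\), to the tensor-hom adjunction \((\blank \otimes_{eAe} eA,\, e\cdot \blank)\). Once this bookkeeping is settled, both the gradability and the uniqueness follow formally from the homogeneity of \(e\) and the indecomposability of the bimodule \(eA\).
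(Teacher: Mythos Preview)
Your proposal is correct and follows essentially the same approach as the paper: identify \(\catO(\frakl)_\xi\) with the Serre quotient via \Cref{prop:21}, realize \(\Delta_\xi\) as \(\blank \otimes_{eAe} eA\) using \cref{sec:serre-subc-serre-1}, and deduce gradability from the homogeneity of \(e\) together with indecomposability of the bimodule. Your argument is in fact slightly more explicit than the paper's, which simply asserts that \(eA\) is indecomposable and cites the standard uniqueness-up-to-shift principle, whereas you justify indecomposability via \(\End_{(eAe,A)}(eA)\cong Z(eAe)\) and the fact that \(\catO(\frakl)_\xi\) is a single block.
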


\begin{proof}
   By \Cref{prop:21}, \(\catO(\frakl)_\xi\) is equivalent to the Serre quotient \(\catO(\frakg)_{\preleq \xi}/\catO(\frakg)_{\prel \xi}\).
 Under this equivalence \(\Delta_\xi\), being left adjoint to \(\pi_\xi\), becomes the inclusion functor of the Serre quotient category in \(\catO(\frakg)_{\preleq \xi}\). In particular, if \(B\) is the endomorphism algebra of a minimal projective generator of \(\catO(\frakg)_{\preleq \xi}\) and \(e\) is the idempotent projecting onto the indecomposable projective modules which are not in \(\catO(\frakg)_{\prel \xi}\), then \(\Delta_\xi\) corresponds to \(\blank \otimes_{eBe} eB \colon \rmod{eBe} \mapto \rmod{B}\) (see \cref{sec:serre-subc-serre-1}).
It is then clear, see the proof of Lemma \ref{lem:grading}, that the indecomposable bimodule \(eB\) admits a graded lift, unique up to a shift. The shift is uniquely determined by an object which is not killed by \(\Delta_\xi\), for example \(M(\tilde \xi)\).
\end{proof}

We fix the graded lift \(\DeltaZ\) so that \(\DeltaZ_\xi(M(\tilde \xi))=M(\tilde \xi)\). Notice that it follows also that the functor \(\Delta_\xi\) restricted to \(\calC^\boldlambda_\xi\) is gradable.
It follows also that each block of \(\calQ^\boldlambda\) is graded standardly stratified, with standardization functor \(\DeltaZ\).

\section{Graded categorification}
\label{sec:grad-categ-1}

We are going now to construct graded lifts of our categorifications to categorifications of  \(U_q(\fraksl_k)\)--representations. Notice that we already know by abstract reasons (see \cite[Corollary~6.3]{2013arXiv1303.1336L}) that such graded lift exist. Our goal here is to realize them explicitly (up to some extent) in the Lie theoretical setting.

In order to modify \Cref{def:12} for obtaining  \(U_q(\fraksl_k)\)--categorifications, we need to replace the action of the degenerate affine Hecke algebra with a graded action of a quiver Hecke algebra. In the ungraded setting, it is equivalent to require an action of the degenerate affine Hecke algebra or of the quiver Hecke algebra: this follows from the remarkable isomorphism between cyclotomic quotients of the two algebras proved in \cite{MR2551762}.  Now, the degenerate affine Hecke algebra does not come with any natural grading, while the quiver Hecke algebra does. Hence it is natural, for an \(U_q(\fraksl_k)\)--categorification, to require a graded action of the quiver Hecke algebra on the functors.

We do not want to enter into details, which are explained in \cite[Section~2]{2013arXiv1310.0349B}, neither we will define the quiver Hecke algebra. All the reader needs to know is that such an algebra exists and admits a natural grading, and that in \Cref{def:12} one could equivalently replace \ref{item:13} by the requirement of an (ungraded) action of the quiver Hecke algebra.

We recall that a graded category is a category \(\calCg\) with an autoequivalence \(\langle 1\rangle \colon \calCg \mapto \calCg\), which we will also denote by \(q\). A graded category \(\calCg\) is called \emph{acyclic} if \(q^\ell L \neq L\) for all \(\ell \neq 0\) and \(L \in \calCg\) irreducible.

\begin{definition}[{\cite[Definition~5.5]{2013arXiv1310.0349B}}]
  \label{def:1}
  An \emph{\(U_q(\mathfrak{sl}_k)\)--categorification} is an
  acyclic graded Schurian category \(\calCg\) together with
  graded endofunctors \(\sfFg_i\), \(\sfEg_i\), \(\sfKg_i\)
  and \(\sfKg_i^{-1}\) for \(i=1,\dotsc,k-1\), an adjunction
  making \(q\sfEg_i \sfKg_i\) into a right adjoint to
  \(\sfFg_i\), and homogeneous natural transformations
  \(\xig \in \End(\sfFg_i)_2\), \(\taug \in \Hom(\sfFg_j
  \sfFg_i, \sfFg_i \sfFg_j)_{- i \cdot j}\) for each \(i,j =
  1,\dotsc,k-1\) such that:
  \begin{enumerate}[label=(GSL\arabic*),leftmargin=*]
  \item \label{item:19} There is a decomposition \(\calCg= \bigoplus_{\nu \in P} \calCg_\nu\) such that \(\sfKg_i|_{\calCg_\nu} \cong q^{(\nu, \alpha_i)}\) and \(\sfKg_i^{-1}|_{\calCg_\nu} \cong q^{-(\nu,\alpha_i)}\).
  \item \label{item:20} The natural transformations \(\xig\) and \(\taug\) define an action of the quiver Hecke algebra.
  \item \label{item:21} Each functor \(q\sfFg_i \sfKg_i^{-1}\) is isomorphic to a right adjoint of \(\sfEg_i\).
  \item \label{item:22} The endomorphisms \(F_i\) and \(E_i\) of \([\calCg]\) induced by \(\sfFg_i\) and \(\sfEg_i\), respectively, make \([\calCg]\) into an integrable representation of \(U_q(\mathfrak{sl}_k)\).
  \end{enumerate}
\end{definition}

We recall the following useful result:

\begin{lemma}[{\cite[Lemma~5.7]{2013arXiv1310.0349B}}]
  \label{lem:11}
  Let \( \calC\) be an \(\fraksl_k\)--categorification with functors \(\sfF_i\), \(\sfE_i\).
For each \(\nu \in P\) let \( \calC_\nu\) be the full subcategory of \( \calC\) consisting of all objects \( M\) such that \([ M]\) lies in the \(\nu\)--weight space of the \(\fraksl_k\)--module \([ \calC]\). Suppose that we are given the following additional data:
  \begin{enumerate}[(1)]
  \item \label{item:23} a graded lift \(\calCg_\nu\) of each \( \calC_\nu\);
  \item \label{item:26} graded functors \(\sfKg_i\) and \(\sfKg_i^{-1}\) satisfying \ref{item:19};
  \item \label{item:24} graded lifts \(\sfFg_i\) and \(\sfEg_i\) of the functors \( \sfF_i\) and \( \sfE_i\) together with an adjunction making \(q\sfEg_i \sfKg_i\) into a right adjoint to \(\sfFg_i\);
  \item \label{item:25} a graded lift of the action of the quiver Hecke algebra.
  \end{enumerate}
Then \(\calCg\) is an \(U_q(\fraksl_k)\)--categorification.
\end{lemma}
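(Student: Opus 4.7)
The plan is to verify the four axioms \ref{item:19}--\ref{item:22} of \Cref{def:1} directly from the assumed data, exploiting the fact that $\calC$ is already known to be an $\fraksl_k$-categorification. Axiom \ref{item:19} is immediate from item~\ref{item:26}, axiom \ref{item:21} is immediate from item~\ref{item:24}, and axiom \ref{item:20} is exactly item~\ref{item:25}. Thus all the content of the lemma is concentrated in axiom~\ref{item:22}: showing that the endomorphisms $F_i, E_i, K_i^{\pm 1}$ induced by $\sfFg_i, \sfEg_i, \sfKg_i^{\pm 1}$ on the graded Grothendieck group $[\calCg]=\bigoplus_\nu [\calCg_\nu]$ turn it into an integrable $U_q(\fraksl_k)$-module.

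First I would equip $[\calCg]$ with its natural $\Z[q,q^{\pm 1}]$-module structure, where $q$ acts as the shift $\langle 1\rangle$. The relations between $K_i^{\pm 1}$ and $E_j,F_j$ then follow formally from \ref{item:19}: on a class in $[\calCg_\nu]$, the functor $\sfFg_j$ lands in $\calCg_{\nu-\alpha_j}$ (since its ungraded shadow does, by the weight-space definition of $\calC_\nu$), so $\sfKg_i \sfFg_j$ acts on $[\calCg_\nu]$ as $q^{(\nu-\alpha_j,\alpha_i)}$ times $\sfFg_j$, matching the $U_q(\fraksl_k)$ relations; similarly for $\sfEg_j$. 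The quantum Serre relations between the $F_i$'s (and dually the $E_i$'s) follow from the graded KLR-action in item~\ref{item:25}: indeed, the KLR relations imply that inside $\End(\sfFg^{\,d})$ one has canonical idempotents decomposing $\sfFg_i\sfFg_j$ and $\sfFg_j\sfFg_i$ into pieces whose classes in $[\calCg]$ produce the quantum Serre identities, exactly as in \cite{2008arXiv0812.5023R} and \cite{MR2525917}.

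The main obstacle is the commutation relation $[E_i, F_j] = \delta_{ij}\,(K_i - K_i^{-1})/(q - q^{-1})$. For $i\neq j$ one promotes the ungraded isomorphism $\sfE_i\sfF_j\cong \sfF_j\sfE_i$ (which holds in any $\fraksl_k$-categorification) to a graded isomorphism $\sfEg_i\sfFg_j\cong \sfFg_j\sfEg_i$; uniqueness of graded lifts of indecomposable exact functors up to shift (cf.\ \Cref{lem:grading}) reduces this to matching a single degree shift, which can be checked by evaluating on any non-zero object in $[\calCg_\nu]$. For $i=j$, the graded adjunction provided by item~\ref{item:24}, combined with the mates construction applied to the KLR dot $\xig$, yields on each weight space $\calCg_\nu$ a natural short exact sequence of graded functors lifting the classical $[\sfE_i,\sfF_i]$-sequence of $\fraksl_k$-categorifications. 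Passing to classes and plugging in the $K_i$-eigenvalue $q^{(\nu,\alpha_i)}$ from \ref{item:19} on both sides then reproduces the quantum commutator on the nose.

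Finally, integrability of $[\calCg]$ as a $U_q(\fraksl_k)$-module is inherited from the integrability of $[\calC]$ as an $\fraksl_k$-module (part of the assumption that $\calC$ is an $\fraksl_k$-categorification): any element of $[\calCg]$ is a $\Z[q,q^{\pm 1}]$-linear combination of classes of objects, and $\sfFg_i,\sfEg_i$ lift $\sfF_i,\sfE_i$, so specializing $q=1$ shows that large enough powers of the generators annihilate every such class. Together with the previous three paragraphs this establishes \ref{item:22} and completes the verification.
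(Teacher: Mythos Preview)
The paper does not give its own proof of this lemma: it is stated with the citation \cite[Lemma~5.7]{2013arXiv1310.0349B} and used as a black box. So there is no argument in the paper to compare yours against; any comparison has to be with the proof in Brundan--Losev--Webster.

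Your outline has the right shape for \ref{item:19}, \ref{item:20}, and \ref{item:22}, but there is a genuine gap at \ref{item:21}. You claim that \ref{item:21} is ``immediate from item~\ref{item:24}'', yet these are \emph{different} adjunctions: item~\ref{item:24} supplies $\sfFg_i \dashv q\,\sfEg_i\sfKg_i$, whereas \ref{item:21} asks for $\sfEg_i \dashv q\,\sfFg_i\sfKg_i^{-1}$. Producing the second adjunction from the first with the correct grading shift is precisely the nontrivial step in the Brundan--Losev--Webster argument; it uses the graded KLR action together with Rouquier's results on $\fraksl_2$--categorifications to build explicit unit and counit maps (via the ``mates'' of the dot $\xig$) and then checks that the resulting natural transformations are isomorphisms by reducing to the ungraded biadjointness already known for $\calC$. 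You invoke this mates construction later, for the commutator relation, but you do not use it where it is actually needed. A secondary issue: your appeal to \Cref{lem:grading} for the $i\neq j$ commutator is not legitimate in this abstract setting, since that lemma is specific to translation functors on category $\catO$ and relies on indecomposability arguments that have no analogue for a general Schurian $\calC$.
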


By \cref{sec:graded-category-cato}, the category \(\catOZ(\gl_\enne)_I\) is a graded lift of \(\catO(\gl_\enne)_I\).
We define on \(\catOZ(\gl_\enne)\) functors \(\sfKZ_i\) and \(\sfKZ_i^{-1}\) by
\begin{equation}
  \label{eq:108}
  \sfKZ_i|_{\catO(\frakg)_\bolda}  = \langle c_{\bolda,i} - c_{\bolda,i+1}\rangle \qquad \text{and} \qquad \sfKZ_i^{-1}|_{\catO(\frakg)_\bolda} = \langle c_{\bolda,i+1} - c_{\bolda, i} \rangle,
\end{equation}
where \(c_{\bolda,i}\) was defined in \cref{eq:104}. Of course \(\sfKZ^{-1}_i \circ \sfKZ_i \cong \sfKZ_i \circ \sfKZ^{-1}_i \cong \id\), and obviously \(\sfKZ_i\) and \(\sfKZ_i^{-1}\) satisfy \ref{item:19}. By \cref{sec:grad-transl-funct}, moreover, we have graded lifts \(\sfFZ\) and \(\sfEZ\) of the endofunctors \(\sfF\) and \(\sfE\) of \(\catO\), which by \Cref{prop:19} satisfy the graded adjunction of \ref{item:24} above. If we prove that \ref{item:25} is also satisfied, then by \Cref{lem:11} we have an \(U_q(\fraksl_k)\)--categorification.

Condition~\ref{item:25} amounts to say that the action of the quiver Hecke algebra on the functors \(\sfFZ_i\) and \(\sfEZ_i\) is homogeneous. Unfortunately, as far as the authors know, there is no direct proof of this fact yet. Nevertheless, it is possible to conclude that \ref{item:25} holds by an indirect argument. Indeed, by the uniqueness result \cite[Theorem~6.1]{2013arXiv1303.1336L}) the categorification \(\catO(\gl_\enne)_I\) with its structure is strongly equivariantly equivalent to Webster's diagrammatic categorification by Corollary~\ref{Cor1}. The latter admits an explicit graded lift such that the corresponding graded algebra is Koszul (\cite[Proposition~8.11]{2013arXiv1309.3796W}). By the uniqueness of the Koszul grading, it follows that the equivalence lifts to an equivalence between \(\catOZ(\gl_\enne)_I\) and Webster's diagrammatic category. Actually, one can also deduce that the graded lifts \(\sfFZ_i\) and \(\sfFZ_i\) correspond to Webster's graded diagrammatic functors (see \cite[Corollary~8.12]{2013arXiv1309.3796W}). Hence the action of the quiver Hecke algebra of the categorification \(\catO(\gl_\enne)_I\) does admit a graded lift. Hence we obtain the following result:

\begin{prop}
  \label{prop:13}
  For all \(\enne \geq 0\) the category \(\catOZ(\gl_\enne)_{I}\) together with the endofunctors \(\sfFZ_i\), \(\sfEZ_i\), \(\sfKZ_i\) and \(\sfKZ^{-1}_i\) and the graded action of the quiver Hecke algebra is an \(U_q(\fraksl_k)\)--categorification.
\end{prop}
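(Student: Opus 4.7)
The plan is to apply \Cref{lem:11} to the $\fraksl_k$--categorification $\catO(\gl_\enne)_I$ of $V^{\otimes \enne}$ provided by \Cref{prop:9}, verifying that the four pieces of auxiliary data \ref{item:23}--\ref{item:25} are supplied by the constructions of \Cref{sec:graded-lift-soergel}. Three of them are essentially already assembled earlier in the paper; only the last requires genuinely new input.

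Items~\ref{item:23} and~\ref{item:26} are immediate. The graded lift of each weight subcategory $\catO(\gl_\enne)_{I,\nu}$ is the direct sum of those blocks of $\catOZ(\gl_\enne)_I$ indexed by $\bolda$ with $\phi(\bolda)=\nu$ in the sense of \cref{eq:95}; and the shift functors $\sfKZ_i^{\pm 1}$ defined in \cref{eq:108} act by the required $q^{\pm(\nu,\alpha_i)}$ because a direct count of letters gives $c_{\bolda,i}-c_{\bolda,i+1}=(\phi(\bolda),\alpha_i)$. For item~\ref{item:24}, the graded lifts $\sfFZ_i$ and $\sfEZ_i$ of $\sfF_i$ and $\sfE_i$ are constructed in \cref{sec:grad-transl-funct} via Soergel's functor and the bimodule description of \Cref{prop:15}, and the graded adjunction $\sfFZ_i\adjunction q\sfEZ_i\sfKZ_i$ demanded by \Cref{def:1} follows from the shifted adjunction of \Cref{prop:19} by substituting the definition of $\sfKZ_i$ on the block $\catO(\gl_\enne)_{(+i)\bolda}$.

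The main obstacle is item~\ref{item:25}: producing a genuinely graded action of the quiver Hecke algebra on the tower $\{\sfFZ_i^{d}\}_{d\geq 0}$. Ungraded, such an action is induced by \Cref{prop:7} through the Brundan--Kleshchev isomorphism \cite{MR2551762}, but there is no direct Lie-theoretic argument available to check that the resulting natural transformations are homogeneous of the correct degrees for the Soergel-theoretic grading. The plan is to argue indirectly by uniqueness. Specialising \Cref{thm:1} to the multipartition $\boldlambda=(\varpi_1,\dotsc,\varpi_1)$ with $\enne$ copies, $\catO(\gl_\enne)_I$ is a tensor product categorification of $V^{\otimes\enne}$, and then \Cref{Cor1} (equivalently the uniqueness theorem \cite[Theorem~6.1]{2013arXiv1303.1336L}) provides an equivalence with the module category over Webster's tensor product algebra. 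Webster equips his algebra with an explicit positive grading which is Koszul by \cite[Proposition~8.11]{2013arXiv1309.3796W}, while $\catOZ(\gl_\enne)_I$ is Koszul by \cite{MR1322847,MR2005290}. Since a basic finite-dimensional algebra admits a Koszul grading unique up to graded Morita equivalence, the ungraded equivalence lifts canonically to a graded equivalence. Under this graded equivalence $\sfFZ_i$ and $\sfEZ_i$ correspond, up to an overall grading shift, to Webster's graded diagrammatic categorification functors by \cite[Corollary~8.12]{2013arXiv1309.3796W}, so transporting the homogeneous KLR natural transformations defined on his side yields the required graded quiver Hecke action on ours.

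With \ref{item:23}--\ref{item:25} verified, \Cref{lem:11} upgrades $\catO(\gl_\enne)_I$ to a $U_q(\fraksl_k)$--categorification, which is the claim.
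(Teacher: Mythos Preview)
Your proposal is correct and follows essentially the same route as the paper: both apply \Cref{lem:11} to the ungraded categorification of \Cref{prop:9}, dispose of items~\ref{item:23}, \ref{item:26}, \ref{item:24} using the graded constructions of \Cref{sec:graded-lift-soergel} (in particular \Cref{prop:19} for the adjunction), and handle the crucial item~\ref{item:25} by the same indirect argument---transporting the homogeneous KLR action through the equivalence with Webster's category, which is obtained from \Cref{Cor1} together with uniqueness of the Koszul grading and \cite[Proposition~8.11, Corollary~8.12]{2013arXiv1309.3796W}. Your explicit identification of $\catO(\gl_\enne)_I$ with $\calQ^{\boldlambda}_I$ for $\boldlambda=(\varpi_1,\dotsc,\varpi_1)$ is a helpful clarification that the paper leaves implicit.
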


Let now \(\boldlambda\) be a sequence of integral dominant weights for \(\fraksl_k\). As the category \(\calQ^\boldlambda_I\) is a full subcategory of \(\catO(\gl_\enne)_I\), the graded lift \(\calQZ^\boldlambda_I\) is naturally a subcategory of \(\catOZ(\gl_\enne)_I\). The endofunctors \(\sfFZ_i\), \(\sfEZ_i\), \(\sfKZ_i\) and \(\sfKZ_i^{-1}\) of \(\catOZ(\gl_\enne)_I\) restrict to graded endofunctors of \(\calQZ^\lambda_I\), and the graded action of the quiver Hecke algebra on the former gives a graded action on the latter. As a consequence, the following graded version of \Cref{prop:12} follows immediately:

\begin{prop}
  \label{prop:20}
  Let \(\lambda \in \weightssl^+\). Then the data of the graded category \(\calQZ^\lambda_I\), of the endofunctors \(\sfFZ_i\), \(\sfEZ_i\), \(\sfKZ_i\) and \(\sfKZ_i^{-1}\) and of the graded action of the quiver Hecke algebra defines an \(U_q(\fraksl_k)\)--categorification of \(V_q(\lambda)\).
\end{prop}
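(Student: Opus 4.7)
The plan is to deduce \Cref{prop:20} from \Cref{prop:13} by restriction to the Serre subquotient \(\calQZ^\lambda_I \subseteq \catOZ(\gl_\enne)_I\), with \(\enne = \abs{\lambda}\), following the same pattern by which the ungraded \Cref{prop:12} was deduced (for the relevant subcategory of \(\catO(\gl_\enne)_I\)). Since Koszul gradings descend to Serre subcategories and Serre quotients (by \cref{sec:graded-category-cato}), \(\calQZ^\lambda_I\) inherits a well-defined positive grading and is an acyclic graded Schurian category. The strategy is then to apply \Cref{lem:11}, using as input data the restrictions of the graded functors already constructed on \(\catOZ(\gl_\enne)_I\).

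To check the hypotheses of \Cref{lem:11}, I first observe that the endofunctors \(\sfFZ_i\) and \(\sfEZ_i\) preserve \(\calQZ^\lambda_I\). In the ungraded setting this is the content of the proof of \Cref{prop:12}: the subcategory \(\calQ^\lambda\) is stable under tensoring with finite-dimensional \(\gl_\enne\)-modules (a property inherited from its defining properties \ref{item:6}--\ref{item:8}), and \(\sfF, \sfE\) are instances of such tensoring followed by a block projection. Since \(\sfFZ_i, \sfEZ_i\) are graded lifts of \(\sfF_i, \sfE_i\) and the grading on the subquotient is induced from the ambient category, the restrictions are automatic. The functors \(\sfKZ_i^{\pm 1}\) are defined blockwise as grading shifts via \cref{eq:108}, and hence trivially restrict to \(\calQZ^\lambda_I\) while satisfying \ref{item:19}. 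The graded adjunction \(\sfFZ_i \adjunction \sfEZ_i \langle c_{\boldd,i}-c_{\boldd,i+1}+1\rangle\) from \Cref{prop:19} restricts to the full subcategory \(\calQZ^\lambda_I\) because both functors preserve this subcategory, and a suitable reformulation identifies this adjunction with the one of \ref{item:21}. Finally, the graded action of the quiver Hecke algebra on \(\sfFZ_i, \sfEZ_i\) (which exists on \(\catOZ(\gl_\enne)_I\) by \Cref{prop:13}) is given by homogeneous natural transformations, which restrict to natural transformations on the subcategory and remain homogeneous.

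It remains to identify \([\calQZ^\lambda_I]\) with \(V_q(\lambda)\). The Grothendieck group is naturally a \(\Z[q,q^{-1}]\)-module via the grading shift, and the induced operators \(F_i, E_i, K_i\) give an integrable \(U_q(\fraksl_k)\)-representation by inheritance from \(\catOZ(\gl_\enne)_I\). Setting \(q=1\) we recover the \(\fraksl_k\)-module \([\calQ^\lambda_I] \cong V(\lambda)\) of \Cref{prop:12}. The classes of indecomposable projectives (which by the proof of \Cref{prop:12} correspond bijectively to semi-standard tableaux of shape \(\lambda\), with weight vector \([P^\lambda(\bolda^{\mathrm{high}})]\) of classical highest weight \(\lambda\)) form a weight basis, and in particular the graded block indexed by \(\lambda\) reduces to \(\C(q)\). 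Since the graded character of any indecomposable projective is a monic polynomial in \(q\) with integer coefficients, this forces \([\calQZ^\lambda_I] \cong V_q(\lambda)\) as an integrable highest weight \(U_q(\fraksl_k)\)-module.

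The main obstacle is not mathematical but bookkeeping: the existence of the graded action of the quiver Hecke algebra on \(\catOZ(\gl_\enne)_I\) itself is proved only indirectly via the uniqueness of the Koszul grading and Webster's diagrammatic categorification (as explained in the discussion preceding \Cref{prop:13}). Once this action is granted on the ambient category, the passage to the subquotient \(\calQZ^\lambda_I\) is formal, as natural transformations, adjunctions and grading shifts all restrict cleanly, and the identification of the Grothendieck group follows from the integrable highest weight classification together with the ungraded result.
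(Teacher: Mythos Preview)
Your proposal is correct and follows essentially the same approach as the paper. The paper's ``proof'' is in fact just the sentence immediately preceding the statement: it observes that \(\calQZ^\lambda_I\) sits as a full subcategory of \(\catOZ(\gl_\enne)_I\), that the graded functors \(\sfFZ_i,\sfEZ_i,\sfKZ_i^{\pm1}\) and the graded quiver Hecke action all restrict, and then declares the result to follow immediately as the graded analogue of \Cref{prop:12}. You have simply made this explicit by organising the verification through \Cref{lem:11} and spelling out why each piece of data restricts; your treatment of the Grothendieck group (specialising to \(q=1\) and invoking the highest-weight classification) is more detailed than anything the paper writes down, but it is the obvious argument the reader is meant to supply.
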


We obtain the following graded version of \Cref{thm:1}:

\begin{theorem}
  \label{thm:4}
  Let \(\boldlambda\) be a sequence of integral dominant weights for \(\fraksl_k\). The data of the graded category \(\calQZ^\boldlambda_I\), of the endofunctors \(\sfFZ_i\), \(\sfEZ_i\), \(\sfKZ_i\) and \(\sfKZ_i^{-1}\) and of the graded action of the quiver Hecke algebra defines an \(U_q(\fraksl_k)\)--categorification of  \(V_q(\boldlambda)\), which is a graded lift of the categorification \(\ucalC(V(\boldlambda))\).
\end{theorem}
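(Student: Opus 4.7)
My plan is to mimic the strategy already used for \Cref{prop:20}: obtain the categorification on \(\calQZ^\boldlambda_I\) by restricting the \(U_q(\fraksl_k)\)--categorification already established on the ambient category \(\catOZ(\gl_\enne)_I\) in \Cref{prop:13}. Concretely, because \(\calQ^\boldlambda_I\) is a full subcategory of \(\catO(\gl_\enne)_I\) preserved by \(\sfF\) and \(\sfE\) (as exploited in the proof of \Cref{thm:1}), the graded lifts \(\sfFZ_i\), \(\sfEZ_i\), \(\sfKZ_i^{\pm 1}\) automatically restrict to graded endofunctors of \(\calQZ^\boldlambda_I\). Similarly, the graded adjunction from \Cref{prop:19} and the graded action of the quiver Hecke algebra via the natural transformations \(\xig\) and \(\taug\) descend from \(\catOZ(\gl_\enne)_I\) to this subcategory.

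I would then apply \Cref{lem:11} to upgrade this package of data to an \(U_q(\fraksl_k)\)--categorification. The weight condition \ref{item:19} follows from the definition of \(\sfKZ_i^{\pm 1}\) in \cref{eq:108}, using that \(c_{\bolda,i} - c_{\bolda,i+1}=(\phi(\bolda),\alpha_i)\); the adjunction required in \ref{item:24} is precisely \Cref{prop:19}; the graded quiver Hecke action of \ref{item:25} is inherited by restriction. The underlying ungraded categorification is exactly \(\ucalC(V(\boldlambda))\) by \Cref{thm:1}, which gives the ``graded lift'' part of the statement for free.

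To identify the Grothendieck group with \(V_q(\boldlambda)\), I would observe that \([\calQZ^\boldlambda_I]\) embeds into \([\catOZ(\gl_\enne)_I]\cong V_q^{\otimes\enne}\) as a \(U_q(\fraksl_k)\)--submodule, and the embedding is forced by \Cref{thm:1} to specialize at \(q=1\) to the embedding \(V(\boldlambda)\hookrightarrow V^{\otimes\enne}\); since the classes of prinjectives give a basis compatible with the grading, the image is the canonical quantum analogue \(V_q(\boldlambda)\hookrightarrow V_q^{\otimes\enne}\) (cf.\ \cref{eq:31}).

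The genuine obstacle is inherited from the ambient case: proving that \(\xig\) and \(\taug\) are homogeneous of the prescribed degrees on \(\catOZ(\gl_\enne)_I\). As already emphasized in the discussion preceding \Cref{prop:13}, no direct Lie-theoretic proof of this homogeneity is currently available. The workaround is the indirect argument via Webster's diagram algebra: the ungraded equivalence from \Cref{Cor1} lifts, by uniqueness of the Koszul grading, to a graded equivalence identifying the functors \(\sfFZ_i\) and \(\sfEZ_i\) with Webster's graded diagrammatic functors, for which the quiver Hecke action is manifestly homogeneous. Restriction to \(\calQZ^\boldlambda_I\) then inherits this homogeneity automatically, completing the verification of the conditions of \Cref{def:1}.
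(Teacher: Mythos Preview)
Your proposal is correct and follows exactly the approach the paper intends: the paper states \Cref{thm:4} without explicit proof, treating it as an immediate consequence of the preceding paragraph (restriction of the graded functors and quiver Hecke action from \(\catOZ(\gl_\enne)_I\) to the full subcategory \(\calQZ^\boldlambda_I\)) together with \Cref{lem:11}, \Cref{prop:19}, \Cref{prop:13}, and the ungraded result \Cref{thm:1}. The only point the paper leaves entirely implicit is the identification \([\calQZ^\boldlambda_I]\cong V_q(\boldlambda)\), for which your embedding-plus-specialization argument (or equivalently, matching weight-space dimensions and invoking semisimplicity of type~I representations) is the natural justification.
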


Now Corollary~\ref{Cor2} follows directly from Theorem~\ref{thm:4}, since the graded category $C(n)$  is by definition, see \cite{miophd2}, \cite{Sartori}, a subquotient category of \(\calQZ^\boldlambda_I\), with \(V(\lambda^{(i)})=V(\varpi_1)\) for $1\leq i\leq n$. 

\begin{remark}
  \label{rem:3}
  It is possible to define an \(U_q(\fraksl_k)\)--categorification of a tensor product, generalizing \cite[Definition~5.8]{2013arXiv1310.0349B}, essentially by replacing \cref{item:17,item:18} in \Cref{def:15} with the following graded versions: \begin{enumerate}[label=(GTPC\arabic*),leftmargin=*]
    \setcounter{enumi}{1}
  \item \label{item:g17} The associated category \(\gr \calCg\) is an \(U_q(\mathfrak{sl}_k^{\oplus r})\)--categorification such that the subcategory of weight \(\boldnu\) is precisely the subquotient \(\calCg_{\preceq \boldnu}/\calCg_{\prec \boldnu}\). Moreover \(\calCg_{\boldlambda} \cong \gmod{\C}\).
  \item \label{item:g18} For each \(M \in \calCg_{\boldnu}\) the objects \(\sfFg_i \Deltag_\boldnu(M)\) and \(\sfEg_i \Deltag_\boldnu(M)\) admit a filtration with successive quotients being graded shifts of \(\Deltag(\prescript{}{j}{\sfFg_i}M)\) and \(\Deltag(\prescript{}{j}{\sfEg_i}M)\), respectively, for \(j=1,\dotsc,r\).
  \end{enumerate}
 It follows by the uniqueness result \cite[Corollary~6.3]{2013arXiv1303.1336L} (see also \cite[Theorem~5.10]{2013arXiv1310.0349B}) that an \(U_q(\fraksl_k)\)--categorification which is a graded lift of a tensor product categorification is equivalent to Webster's diagrammatic categorification. Since the latter satisfies \cref{item:g17,item:g18}, the former also does. We remark that it is easy to prove \ref{item:g17} for \(\calQZ^\boldlambda_I\) exactly as in the non-graded setting. We however do not know an easy argument to check \ref{item:g18} explicitly for \(\calQZ^\boldlambda_I\).
\end{remark}

\providecommand{\bysame}{\leavevmode\hbox to3em{\hrulefill}\thinspace}
\providecommand{\MR}{\relax\ifhmode\unskip\space\fi MR }
\providecommand{\MRhref}[2]{%
  \href{http://www.ams.org/mathscinet-getitem?mr=#1}{#2}
}
\providecommand{\href}[2]{#2}

\end{document}